\newtheorem{theorem}{Theorem}[section]
\newtheorem{lemma}{Lemma}[section]
\newtheorem{corollary}{Corollary}[section]
\newtheorem{prop}{Proposition}[section]
\newtheorem{definition}{Definition}[section]
\newtheorem{remark}{Remark}
\definecolor{ddgreen}{rgb}{0,0.4,0.4}
\definecolor{dgreen}{rgb}{0,0.65,0}
\newenvironment{proof}{\noindent{\textsc{Proof.}}}
{$\hfill\Box$\vspace{0.1 cm}\\}
\newenvironment{proofof}[1]{\noindent{\textsc{Proof of #1.}}}
{$\hfill\Box$\vspace{0.1 cm}\\}
\newcommand{\R}{\mathbb R}
\newcommand{\N}{{\mathbb N}}
\newcommand{\F}{{\mathcal F}}
\newcommand{\NF}{{\mathcal NF}}
\newcommand{\tv}{\mathrm{TV}}
\newcommand{\tvp}{\mathrm{TV}^+}
\newcommand{\LL}[1]{\mathbf{L^{\pmb{#1}}}}
\newcommand{\Ll}[1]{\mathbf{L_{loc}^{\pmb{#1}}}}
\newcommand{\C}[1]{{\mathbf C^{\pmb{#1}}}}
\newcommand{\Cc}[1]{{\mathbf {C_c^{\pmb{#1}}}}}
\newcommand{\Wl}[1]{{\mathbf{W_{loc}^{\pmb{#1}}}}}
\newcommand{\eps}{\varepsilon}
\newcommand{\norm}[1]{\left\| #1 \right\|}
\newcommand{\abs}[1]{\left\vert #1 \right\vert}
\newcommand{\jumps}{\mathcal{J}}
\newcommand{\shocks}{\mathcal{S}}
\newcommand{\raref}{\mathcal{R}}
\newcommand{\pt}{\partial_t}
\newcommand{\px}{\partial_x}
\newcommand{\fhi}{\varphi}
\DeclareMathOperator{\sgn}{sgn}
\let\@fnsymbol\@arabic
\title{A multiscale model for traffic regulation via autonomous vehicles}
\author{
\textsc{Mauro Garavello\footnotemark[1]}
\and
\textsc{Paola Goatin\footnotemark[2]}
\and
\textsc{Thibault Liard\footnotemark[3]}
\and
\textsc{Benedetto Piccoli\footnotemark[4]}
}
\date{\today}
\begin{document}

\footnotetext[1]{
  Department of Mathematics and its Applications,
  University of Milano - Bicocca,
  via R. Cozzi 55,
  20125 Milano, Italy. 
  E-mail: \texttt{mauro.garavello@unimib.it}
  }
  
\footnotetext[2]{
  Inria Sophia Antipolis - M\'editerran\'ee, 
  Universit\'e C\^ote d'Azur, Inria, CNRS, LJAD, 
  2004 route des Lucioles - BP 93, 06902 Sophia Antipolis Cedex, France.
  E-mail: \texttt{paola.goatin@inria.fr}
  }
  
\footnotetext[3]{
Chair of Computational Mathematics, Fundaci\'on Deusto, Av. de las Universidades 24, 48007 Bilbao, Basque Country, Spain. E-mail: \texttt{thibault.liard@deusto.es}
}

\footnotetext[4]{
  Department of Mathematical Sciences and
  Center for Computational and Integrative Biology,
  Rutgers University - Camden,
  311 N 5th Street, Camden, NJ 08102, USA.
  E-mail:  \texttt{piccoli@camden.rutgers.edu}
  }

\maketitle

\begin{abstract}
Autonomous vehicles (AVs) allow new ways of regulating the traffic flow on road networks.
Most of available results in this direction are based on microscopic approaches, where ODEs describe the evolution of regular cars and AVs.
In this paper, we propose a multiscale approach, based on recently developed models for moving bottlenecks.
{Our main result is the proof of existence of solutions for time-varying bottleneck speed,
which corresponds to
open-loop controls with bounded variation.}
\end{abstract}

\textit{Key Words:} Conservation laws; PDE-ODE system; macroscopic traffic models; moving bottlenecks; autonomous vehicles; control problems.

\textit{AMS Subject Classifications:} 90B20, 35L65.

\section{Introduction}\label{se:introduction}
Autonomous Vehicles (briefly AVs) 
represent the most disruptive technology for traffic regulation
\cite{harper2016cost,milakis2017policy,wadud2016help,wan2016optimal,
  wang2017comparing}. 
The effect of AVs in terms of influencing bulk traffic has been
studied \textsl{in-silico}
\cite{davis2004effect,gueriau2016assess,knorr2012reducing,
  talebpour2016influence,wang2016cooperative}, artificial environment
\cite{Jang:2019:SSC:3302509.3313784} 
and also in experiments \cite{STERN2018205}.
In particular, the results
of \cite{STERN2018205} showed a potential
decrease of up to 40\% in fuel consumption
by dampening of traffic waves.
Despite such achievements, a complete macroscopic theory for control of
bulk traffic via AVs is still missing.
The need of a macroscopic theory is due
to the curse of dimensionality 
preventing control design for microscopic
models~\cite{DelleMonache2019}.

Our approach to bypass current limitations
is based on the idea of using macroscopic models for the bulk traffic, consisting
of partial differential equations (PDEs), paired with microscopic ones for the AVs,
consisting of ordinary differential equations (ODEs).
The history of modeling traffic via partial differential equations started with the celebrated Lighthill-Whitham-Richards (briefly LWR)
model for traffic flow on a road \cite{Lighthill-Whitham_1955, Richards_1956}.
The authors assumed that the average speed $v$ depends only on local density, thus obtaining
a scalar conservation law.
More recently, the LWR model was paired
to an ODE via a moving flux constraint
producing a multiscale model, see
\cite{DMG2014,MR3264413,MR3657113}. Moreover,
other works used coupled ODE-PDE
systems~\cite{BCG2012, DMG2014, LattanzioMauriziPiccoli, LLG98, VGC2017}.

{Here we extend the multiscale model
of \cite{MR3264413} by assuming that
the AV desired speed, described by a control $u$,
is time-dependent, as in the case of regulation by a centralized controller.}
The resulting system is obtained by modifying the ODE for the AV, which
is given by the minimum between the
desired speed $u$ and the average speed $v$
of local downstream traffic. 
The proposed approach has already been investigated numerically
in~\cite{PGF2017}, using Model Predictive Control
and proving its effectiveness 
in reducing traffic nuisance.

We base our work on the many results
available for the multiscale model of \cite{MR3264413}. In particular,
existence and well-posedness of solutions
was investigated in~\cite{MR3657113, LiardPiccoli2, LiardPiccoli}
and the numerical
aspect in~\cite{CDMG2017, DMG2014, DMG2016}.
{ Moreover, the multiscale model was
  extended to second order in~\cite{VGC2017}, replacing the LWR  by Aw-Rascle-Zhang model.
}

The complete multiscale model is given by the
system~\eqref{eq:control-model}, consisting of a scalar conservation law,
a controlled ODE and a moving flux constraint (plus initial conditions).
To couple the equations, we need to specify the capacity reduction function
$F_\alpha$ for every AV desired speed. This gives rise to undercompressive shocks
also depending on the desired speed $u$, see~\eqref{eq:check-hat}.
The definition of solution needs to 
reflect this choice via
a modified entropy condition, see point~\ref{def:sol-CP:3}
of Definition~\ref{def:sol-CP}.\\
{Our main result is the proof
of existence of solutions for time-varying
speed profile of the AV with bounded variation. The latter corresponds for a centralized
controller to an open loop control $u$
with bounded variation in time.}
In order to achieve the result,
we construct approximate
solutions via wave-front tracking
approximations. 
In particular, we construct
approximation grids allowing all left and right densities of undercompressive
shocks for the discretized desired speed.\\
As usual, the sequence of approximate solutions satisfy some compactness
estimate, that allows to pass to the limit. In our case, we need to define a
Glimm type functional accounting for the density total variation, a term
for variation due to undercompressive shocks and the time-variation of the
{time-varying speed.}
We can prove that at each wave interaction such functional
either decreases (by a quantity bounded away from zero) or remains constant,
but in this case the number of waves does not increase. Moreover, a detailed
analysis is necessary to prove the convergence of the AV speed.
Once a limit of approximate solutions is obtained, one has to prove that it satisfies the various
conditions in the definition of solution. For this we use various results from
the above mentioned literature adapted to our case.
{We remark that these results can be extended
  to the case of a finite number of autonomous vehicles,
  provided their trajectories do not interact. This extension
  is straightforward since all the waves generated by the multiscale system propagate with finite speed.
}

The paper is organized as follows. Section~\ref{se:control_model} describes the
coupled PDE-ODE model and introduces the basic notation.
Section~\ref{se:cauchy} deals with the Cauchy problem. More precisely,
in Subsection~\ref{sse:wft} we describe the wave-front tracking algorithm, we construct
the approximating grids, and we define the Glimm type functional;
in Subsection~\ref{sse:interactions} we study the changes in the Glimm type functional,
due to wave interactions; finally
in Subsection~\ref{sse:existence-solution} we state and prove
the main theorem.
The paper ends with
Appendix~\ref{sec:app:tech-lemmas}, which contains various technical lemmas
used in Subsection~\ref{sse:existence-solution}, and with
 Appendix~\ref{se:RP}, which describes in details the solution to the Riemann
 problem. 

%
%
%
%
\section{Description of the control model}
\label{se:control_model}

Consider a unidirectional road $I$, model by the real line $\R$,
where the traffic is described by the LWR
model~\cite{Lighthill-Whitham_1955, Richards_1956}
\begin{equation}
  \label{eq:LWR-model}
  \pt \rho + \px f\left(\rho\right) = 0,
\end{equation}
where $\rho = \rho(t,x) \in [0, R]$ denotes the macroscopic traffic density
at time $t \ge 0$ and at position $x \in \R$, and $f = f(\rho)$
is the flux, which depends only on $\rho$.
The constant $R$ denotes the maximum possible density of the road.
As usual the flux $f$ is given by $\rho v\left(\rho\right)$, where
$v \in \C2\left([0, R]; [0, +\infty)\right)$
is the average speed of cars. We assume that
the flux satisfies the condition
\begin{enumerate}[label=\textbf{(F)}]
\item \label{hyp:F}
  $f: \C2\left([0, R]; [0, +\infty)\right)$,\quad $f(0) = f(R) = 0$, 
  \\	
  $f$ strictly concave: $-B\leq f''(\rho)\leq-\beta<0$
  for all $\rho\in [0,R]$, for some $\beta, B >0$. \\
  Moreover, we assume $v'(\rho)< 0$ for every $\rho\in\,]0,R[$.
\end{enumerate}
Note that, by~\ref{hyp:F}, the speed $v$ is a strictly decreasing function.

Assume that a vehicle, e.g. a police or autonomous vehicle (AV),
whose position is described
by the variable $y = y(t)$, aims at controlling the behavior of traffic,
selecting its maximal speed. Hence the evolution of such a vehicle
is described by
the ordinary differential equation (ODE)
\begin{equation*}
  \dot y(t) = \omega(\rho(t,y(t));u):=\min\{ u(t), v(\rho(t,y(t)_+))\},
\end{equation*}
where $u = u(t) \in [0, V]$, with $V:=\max_{\rho\in[0,R]} v(\rho)$,
is a control function, selecting the \textit{desired speed}
of the vehicle. Indeed, the vehicle can move at its desired speed
as long as the 
downstream traffic moves faster, otherwise it has to adapt.
{The notations $\rho\left(t, x_-\right)$ and
  $\rho\left(t, x_+\right)$
  stand respectively for the left and right trace,
  with respect to the variable $x$, of $\rho$ at the point $(t, x)$,
  i.e.
  \begin{equation*}
    \rho(t, x_-) := \lim_{\xi \to x^-} \rho(t, \xi)
    \qquad \textrm{ and } \qquad
    \rho(t, x_+) := \lim_{\xi \to x^+} \rho(t, \xi),
  \end{equation*}
  whose existence is ensured provided $\rho(t,\cdot)$ has finite
  total variation.
  In the sequel, the notation is used also for the
  trace with respect to the time variable $t$.}

Following the model proposed in~\cite{MR3264413, LLG98},
we consider the following system
\begin{subequations}
  \label{eq:control-model}
  \begin{align}
      &\pt \rho \left(t,x\right) + \px f\left(\rho\left(t,x\right)\right) = 0,
      &
        t > 0,\, x \in \R, \label{eq:LWR}
      \\[5pt]
      &\dot y(t) = \min\{ u(t), v(\rho(t,y(t)_+))\},
      &
        t > 0,
       \\[5pt]
    & f \left(\rho\left(t,y(t)\right)\right) 
      - \dot y(t) \rho \left(t,y(t)\right) \le
      F_\alpha\left(\dot y(t)\right):=\max_{\rho\in[0,R]}
      \left( \alpha f(\rho/\alpha)-\rho\dot y (t)\right),
      \label{eq:constraint}
      &
        t> 0,
       \\
      &\rho(0, x) = \rho_0(x),
      &
        x \in \R, \label{eq:initial}
      \\[5pt]
      &y(0) = y_0.&
  \end{align}
\end{subequations}
Above, $\rho_0$ and $y_0$ are the initial traffic density and AV position,
while the function $F_\alpha$ in~\eqref{eq:constraint}, $\alpha \in\ ]0, 1[$, represents the
road capacity reduction due to the presence of the AV, acting as a moving bottleneck which imposes
a unilateral flux constraint at the AV position.
To determine the function $F_\alpha$, we consider reduced flux function
\begin{equation*}
  \begin{array}{rccc}
    f_{\alpha}: 
    & [0, \alpha R] 
    & \longrightarrow 
    & \R^+
    \\
    & \rho 
    & \longmapsto 
    & 
      \rho v({\rho}/{\alpha})= \alpha f(\rho/\alpha),
  \end{array}
\end{equation*}
which is a strictly concave function satisfying
$f_{\alpha}(0) = f_{\alpha}(\alpha R) = 0$.
For every $u \in [0, V]$, define the point
$\tilde \rho_{u}$ as the unique solution to the equation
$f'_{\alpha}(\rho) = u$. Introduce also, for every $u \in [0, V]$,
the function
\begin{equation*}
  \begin{array}{rccc}
    \fhi_{u}: 
    & [0, R] 
    & \longrightarrow 
    & \R^+
    \\
    & \rho 
    & \longmapsto 
    & 
      f_{\alpha}(\tilde \rho_u) + u \left(\rho - \tilde \rho_u\right).
  \end{array}
\end{equation*}
Hence, if $\dot y(t)=u$, the function $F_\alpha$ in~\eqref{eq:constraint} is defined by
\begin{equation*}
  \begin{array}{rccc}
    F_\alpha: 
    & [0, V] 
    & \longrightarrow 
    & \R^+
    \\
    & u
    & \longmapsto 
    & \fhi_u(0) = f_{\alpha}(\tilde \rho_u) - u \tilde \rho_u.
  \end{array}
\end{equation*}
If $\dot y(t)=v(\rho(t,y(t)_+))$, then the inequality~\eqref{eq:constraint}
is trivially satisfied since the left-hand side is zero.
Finally, the points 
$0 \le \check \rho_u \le \tilde \rho_u \le \hat \rho_u \le \rho_u^* \le R$ are
uniquely defined by
\begin{equation}
  \label{eq:check-hat}
  \check \rho_u = \min \mathcal I_u, \quad
  \hat \rho_u = \max \mathcal I_u, \quad
  \mathcal I_u = \left\{\rho \in [0, R]:\, f(\rho) = \fhi_u(\rho)\right\},
\end{equation}
and implicitly by
\begin{equation}
  \label{eq:rho_u}
  v(\rho_u^*) = u,
\end{equation}
see~\cite{MR3264413} and~Figure~\ref{fig:check-hat}.
It is straightforward to see that $\check \rho_V = \tilde \rho_V = \hat \rho_V = \rho^*_V =0$. 

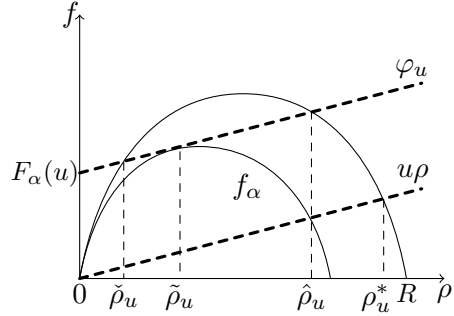
\begin{figure}
  \centering
    \begin{tikzpicture}[line cap=round,line join=round,x=1.cm,y=0.7cm]

      \draw[<->] (0.2, 6.) -- (0.2, 1.) -- (5, 1.);

      \draw (0.2, 1.) to [out = 80, in = 180] (2.35, 4.5)
      to [out = 0, in = 100] (4.5, 1.);

      \draw (0.2, 1.) to [out = 80, in = 180] (1.77, 3.5)
      to [out = 0, in = 100] (3.5, 1.);

      \draw[dashed, very thick] (0.2, 1.) -- (0.2 + 4.5, 1. + 1.7);
      \draw[dashed, very thick] (0.2, 3) -- (0.2 + 4.5, 3. + 1.7);

      \draw[dashed] (4.2, 2.5) -- (4.2, 1.);
      \draw[dashed] (3.25, 4.17) -- (3.25, 1.);
      \draw[dashed] (0.78, 3.2) -- (0.78, 1.);

      \draw[dashed] (1.52, 3.4) -- (1.52, 1.);

      \node[anchor=north,inner sep=0] at (0.2,0.9) {$0$};
      \node[anchor=east,inner sep=0] at (0.2,3) {\small $F_\alpha(u)$};
      \node[anchor=north,inner sep=0] at (4.1,0.9) {$\rho_u^*$};
      \node[anchor=north,inner sep=0] at (1.52,0.9) {$\tilde \rho_u$};
      \node[anchor=north,inner sep=0] at (3.25,0.9) {$\hat\rho_u$};
      \node[anchor=north,inner sep=0] at (0.78,0.9) {$\check\rho_u$};
      \node[anchor=north,inner sep=0] at (5,0.9) {$\rho$};
      \node[anchor=east,inner sep=0] at (0.2,6) {$f$};
      \node[anchor=north west,inner sep=0] at (2.2,3) {$f_{\alpha}$};
      \node[anchor=north,inner sep=0] at (4.5,0.9) {\small $R$};

      \node[anchor=south east,inner sep=0] at (4.8,4.8) {$\fhi_u$};
      \node[anchor=south east,inner sep=0] at (4.8,2.8) {$u \rho$};

    \end{tikzpicture}    
    \caption{The definition of $\tilde \rho_u$, $\check \rho_u$,
      $\hat \rho_u$ and $\rho_u^*$.}
  \label{fig:check-hat}
\end{figure}

\begin{lemma}
  \label{lem:monotonicity}
  Assume that the flux $f$ satisfies~\textup{\ref{hyp:F}}.  Let
  $\check\rho_u \leq \hat\rho_u\leq \rho^*_u$ be defined by~\eqref{eq:check-hat} and \eqref{eq:rho_u}.
  Then the maps $\check\rho(u):u\mapsto\check\rho_u$ and
  $\hat\rho(u):u\mapsto\hat\rho_u$ are strictly decreasing on $[0,V]$ and
  satisfy
  \begin{equation}
    \label{eq:check-hat-derivative}
    - \infty < \check{\rho}'(u) < 0
    \qquad
    - \infty < \hat{\rho}'(u) < 0
  \end{equation}
  for every $u \in \left[0, V\right[$.
  Moreover, the maps $\check\rho(u)$ and
  $\rho^*(u):u\mapsto \rho^*_u=v^{-1}(u)$ are also Lipschitz continuous
  functions.
  Finally we have
  \begin{equation}
    \label{eq:order-point-2}
    \check{\rho}_u < \tilde{\rho}_u < \frac{\tilde{\rho}_u}
    {\alpha} < \hat{\rho}_u <\rho^*_u
  \end{equation}
  for every $u \in \left[0, V\right[$.
\end{lemma}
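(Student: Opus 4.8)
The plan is to reduce every assertion to elementary one-variable concave geometry for the flux $f$ and the competing affine map $\varphi_u$. First I would record a few preliminary identities. Since $f_\alpha'(\rho) = f'(\rho/\alpha)$, the equation $f_\alpha'(\tilde\rho_u) = u$ is the same as $\tilde\rho_u/\alpha = (f')^{-1}(u)$; hence $\tilde\rho_u = \alpha (f')^{-1}(u)$ is $C^1$ in $u$ with $\tilde\rho'(u) = \alpha/f''(\tilde\rho_u/\alpha) \in [-\alpha/\beta, -\alpha/B]$, and the auxiliary function $g_u(\rho) := f(\rho) - \varphi_u(\rho) = f(\rho) - u\rho - F_\alpha(u)$ satisfies $g_u'(\rho) = f'(\rho) - u$, so it is strictly concave with a unique interior maximum at $\rho = \tilde\rho_u/\alpha$. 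Three further facts I would establish: (i) for $u < V$ one has $\tilde\rho_u > 0$, so $\varphi_u$ is the tangent to the strictly concave $f_\alpha$ at an interior point, whence $F_\alpha(u) = \varphi_u(0) > f_\alpha(0) = 0$; (ii) differentiating $F_\alpha(u) = f_\alpha(\tilde\rho_u) - u\tilde\rho_u$ and using $f_\alpha'(\tilde\rho_u) = u$ gives $F_\alpha'(u) = -\tilde\rho_u$, so $F_\alpha \in C^1$; (iii) from $v(\rho) = f(\rho)/\rho$ one computes $v'(\rho) = \rho^{-2}\int_0^\rho s f''(s)\,ds \le -\beta/2$ on $(0,R]$ (and the bound passes to $\rho = 0$), so $v$ is a $C^2$ strictly decreasing bijection $[0,R]\to[0,V]$ and $\rho^*(u) = v^{-1}(u)$ is strictly decreasing and Lipschitz with constant $\le 2/\beta$, which already settles the claims about $\rho^*$.

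Next I would prove the ordering \eqref{eq:order-point-2} for $u\in[0,V[$ by testing $g_u$ at five points. A direct computation gives $g_u(0) = g_u(\rho^*_u) = -F_\alpha(u) < 0$, $g_u(R) = -(F_\alpha(u)+uR) < 0$, $g_u(\tilde\rho_u) = \tilde\rho_u\big(v(\tilde\rho_u) - v(\tilde\rho_u/\alpha)\big) > 0$ (since $\tilde\rho_u/\alpha > \tilde\rho_u$ and $v$ is strictly decreasing), and $g_u(\tilde\rho_u/\alpha) = (\alpha^{-1}-1)F_\alpha(u) > 0$; moreover $f'(\rho^*_u) = u + \rho^*_u v'(\rho^*_u) < u = f'(\tilde\rho_u/\alpha)$ forces $\tilde\rho_u/\alpha < \rho^*_u$. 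Since $g_u$ is strictly increasing on $(0,\tilde\rho_u/\alpha)$, strictly decreasing on $(\tilde\rho_u/\alpha,R)$, has a positive maximum, and (being strictly concave) at most two zeros, the sign pattern forces $\mathcal I_u$ to consist of exactly one zero $\check\rho_u\in(0,\tilde\rho_u)$ — using $g_u(0)<0<g_u(\tilde\rho_u)$ together with $\tilde\rho_u<\tilde\rho_u/\alpha$ — and exactly one zero $\hat\rho_u\in(\tilde\rho_u/\alpha,\rho^*_u)$ — using $g_u(\tilde\rho_u/\alpha)>0>g_u(\rho^*_u)$. Combined with $\tilde\rho_u<\tilde\rho_u/\alpha$, this is precisely \eqref{eq:order-point-2}, and it also yields $0<\check\rho_u$ and $\hat\rho_u<R$.

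For the remaining regularity I would apply the implicit function theorem to $h(\rho,u) := f(\rho) - u\rho - F_\alpha(u)$, which is $C^1$ by (ii), at its two roots. There $\partial_\rho h(\rho,u) = f'(\rho) - u$ and $\partial_u h(\rho,u) = \tilde\rho_u - \rho$; by \eqref{eq:order-point-2} one has $\partial_\rho h(\check\rho_u,u) = f'(\check\rho_u)-u > 0$ and $\partial_\rho h(\hat\rho_u,u) = f'(\hat\rho_u)-u < 0$, so $\check\rho(\cdot)$ and $\hat\rho(\cdot)$ are $C^1$ on $[0,V[$ with
\[
\check\rho'(u) = \frac{\check\rho_u - \tilde\rho_u}{f'(\check\rho_u) - u}, \qquad
\hat\rho'(u) = \frac{\hat\rho_u - \tilde\rho_u}{f'(\hat\rho_u) - u},
\]
both finite and, again by the sign information in \eqref{eq:order-point-2}, strictly negative; this is \eqref{eq:check-hat-derivative}. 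For the Lipschitz bound on $\check\rho$, a mean value estimate with $f''\le -\beta$ gives $f'(\check\rho_u)-u = f'(\check\rho_u)-f'(\tilde\rho_u/\alpha) \ge \beta(\tilde\rho_u/\alpha - \check\rho_u) \ge \beta(\tilde\rho_u - \check\rho_u)$, hence $|\check\rho'(u)| \le 1/\beta$. Finally, as $u\to V^-$ we have $\tilde\rho_u\to 0$ and $\rho^*_u\to 0$, so $\check\rho_u\to 0$ and $\hat\rho_u\to 0 = \check\rho_V = \hat\rho_V$; thus $\check\rho,\hat\rho$ extend continuously and are strictly decreasing on all of $[0,V]$, and the uniform bound $|\check\rho'|\le 1/\beta$ on $[0,V[$ makes $\check\rho$ globally Lipschitz. (Note $\hat\rho$ is not claimed to be Lipschitz, and indeed its derivative need not remain bounded as $u\to V^-$.)

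The heart of the argument — essentially the only place requiring care — is the second paragraph: one must pin down that $g_u$ peaks exactly at $\tilde\rho_u/\alpha$ and that $\tilde\rho_u/\alpha$ lies strictly between $\tilde\rho_u$ and $\rho^*_u$, so that the two zeros of $g_u$ straddle $\tilde\rho_u/\alpha$ in the precise manner demanded by \eqref{eq:order-point-2}; everything else is bookkeeping. A secondary subtlety is the apparent $0/0$ form of $\check\rho'(u)$ as $u\to V^-$, which is resolved by the inequality $\tilde\rho_u/\alpha - \check\rho_u \ge \tilde\rho_u - \check\rho_u$ used above before forming the quotient.
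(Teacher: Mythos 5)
Your proof is correct and follows essentially the same route as the paper's: the ordering \eqref{eq:order-point-2} is obtained from strict concavity of $\rho\mapsto f(\rho)-\varphi_u(\rho)$ with its maximum at $\tilde\rho_u/\alpha$, and the monotonicity and Lipschitz claims come from the same implicit-derivative formulas for $\check\rho'(u)$ and $\hat\rho'(u)$ combined with $f''\le-\beta$, your additions (implicit function theorem, the identity $F_\alpha'(u)=-\tilde\rho_u$, the quantitative bound $v'\le-\beta/2$ for $\rho^*=v^{-1}$) merely making explicit some steps the paper leaves implicit. The only inaccuracy is your closing parenthetical: under \ref{hyp:F} the derivative $\hat\rho'$ does in fact stay bounded as $u\to V^-$ (one can check $\hat\rho_u-\tilde\rho_u/\alpha\ge\sqrt{(1-\alpha)\beta/B}\,\tilde\rho_u/\alpha$, which bounds the quotient), but since the lemma does not assert Lipschitz continuity of $\hat\rho$ this side remark does not affect your argument.
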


\begin{proof}
  First note that $f'\left(\frac{\tilde{\rho}_u}{\alpha}\right) = u$;
  thus differentiating this expression with respect to $u$ we deduce
  that
  $f''\left(\frac{\tilde{\rho}_u}{\alpha}\right)
  \frac{\tilde{\rho}'_u}{\alpha} = 1$
  and so $\tilde{\rho}'_u < 0$ for every $u \in \, [0, V]$. This
  proves that $\tilde{\rho}_u$ is strictly decreasing with respect to
  $u$.

  Observe that $f_\alpha(\rho) < f(\rho)$ for every
  $\rho \in \, ]0, \alpha R]$ and that the function
  $H: [0, R] \to \R$, defined by $H(\rho) = f(\rho) - \fhi_u(\rho)$,
  is strictly concave.  By~\eqref{eq:check-hat},
  $H\left(\check{\rho}_u \right) = H\left(\check{\rho}_u \right) = 0$.
  Moreover
  $H\left(\tilde{\rho}_u \right) = f\left(\tilde{\rho}_u \right) -
  f_\alpha\left(\tilde{\rho}_u \right) > 0$
  for every $u \in [0, V[$. The concavity of $H$ implies that
  \begin{equation}
    \label{eq:order-point-1}
    \check{\rho}_u < \tilde{\rho}_u < \hat{\rho}_u<\rho^*_u
  \end{equation}
  for every $u \in [0, V[$. Besides, since the map
  $\rho\mapsto f(\rho)-u\rho$ is strictly concave for every fixed
  $u \in [0,V]$, with point of maximum at $\tilde\rho_u /\alpha$,
  by~\eqref{eq:check-hat} we also deduce that
  $\check{\rho}_u < \tilde{\rho}_u / \alpha < \hat{\rho}_u$.
  Therefore~\eqref{eq:order-point-1} implies~(\ref{eq:order-point-2}).

  By~\eqref{eq:check-hat}, we have
  \begin{equation}
    \label{eq:7-bis}
    f\left(\check{\rho}_u\right) - u \check{\rho}_u
    = f_\alpha \left(\tilde{\rho}_u\right) - u \tilde{\rho}_u
    \qquad \textrm{ and } \qquad
    f\left(\hat{\rho}_u\right) - u \hat{\rho}_u
    = f_\alpha \left(\tilde{\rho}_u\right) - u \tilde{\rho}_u.
  \end{equation}
  Differentiating equations~\eqref{eq:7-bis} w.r.t. $u$, for every
  $u \in [0, V[$ we have
  \begin{align}
    \label{eq:deriv-check} \check\rho'(u)
    &=
      \frac{\check{\rho}_u - \tilde{\rho}_u} {f'(\check{\rho}_u) -u} =
      \frac{\check\rho_u - \tilde\rho_u}{f''(\check\rho_u -\theta
      (\check\rho_u - \tilde\rho_u/\alpha) ) (\check\rho_u -
      \tilde\rho_u/\alpha)} \ge \frac{1}{f''(\check\rho_u -\theta
      (\check\rho_u - \tilde\rho_u/\alpha) )}\geq
      -\frac{1}{\beta},
    \\ 
    \label{eq:deriv-hat} \hat\rho'(u)
    &=
      \frac{\hat{\rho}_u - \tilde{\rho}_u}{f'(\hat{\rho}_u) -u} =
      \frac{\hat\rho_u - \tilde\rho_u}{f''(\hat\rho_u -\theta' (\hat\rho_u
      - \tilde\rho_u/\alpha) ) (\hat\rho_u - \tilde\rho_u/\alpha)} \le
      \frac{1}{f''(\hat\rho_u -\theta' (\hat\rho_u - \tilde\rho_u/\alpha)
      )} \leq -\frac{1}{B}
  \end{align}
  for some $\theta, \theta'\in [0,1]$.
  Using~\eqref{eq:order-point-2},~(\ref{eq:deriv-hat}), ~\ref{hyp:F},
  and the fact that $u = f'\left(\tilde{\rho}_u / \alpha\right)$, we
  deduce that~(\ref{eq:check-hat-derivative}) holds and that
  both $\check{\rho}_u$ and $\hat{\rho}_u$ are strictly
  decreasing as functions of $u\in [0,V[$.
  Finally, by~(\ref{eq:deriv-check}), we deduce that
  \begin{equation*}
    - \frac{1}{\beta} \le \check{\rho}'(u) < 0
  \end{equation*}
  for every $u \in [0, V[$, proving that the function
  $\check{\rho}(u)$ is Lipschitz continuous. By~\ref{hyp:F},
  $v$ is a  continuously differentiable  function and $v'(\rho)<0$
  for every $\rho \in \,]0, R[$.
  Using the Inverse Function Theorem, we deduce that $v^{-1}$
  is a  continuously differentiable function and $\rho^*(u)=v^{-1}(u)$.
  This concludes the proof.
\end{proof}

\begin{lemma}
  \label{le:grid_sequence}
  Let us consider the sequence $\omega_n$, defined by
  \begin{equation}
    \label{eq:grid_sequence}
    \omega_0 = 0, \qquad
    \omega_{n+1} = \hat{\rho}^{-1} \left(\check{\rho}\left(\omega_n\right)\right)
    \qquad \forall n \in \N. 
  \end{equation}
  Then $\omega_n$ is a strictly increasing sequence such that
  \begin{equation*}
    \lim_{n \to + \infty} \omega_n = V.
  \end{equation*}
\end{lemma}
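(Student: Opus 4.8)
The plan is to read the recursion~\eqref{eq:grid_sequence} as the fixed‑point iteration $\omega_{n+1}=g(\omega_n)$ for the map $g:=\hat\rho^{-1}\circ\check\rho$, and to establish three facts: that $g$ is a well‑defined, continuous, strictly increasing self‑map of $[0,V[$; that $g(u)>u$ for every $u\in[0,V[$; and that the only possible limit of the iterates in $[0,V]$ is $V$. For well‑definedness I would use Lemma~\ref{lem:monotonicity}: $\check\rho$ and $\hat\rho$ are continuous (indeed $\check\rho$ is Lipschitz, $\hat\rho$ is differentiable on $[0,V[$) and strictly decreasing on $[0,V]$, with $\check\rho_V=\hat\rho_V=0<\hat\rho_0$ and $\check\rho_0<\hat\rho_0$ by~\eqref{eq:order-point-1}. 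Hence $\hat\rho$ is a homeomorphism of $[0,V]$ onto $[0,\hat\rho_0]$, while $\check\rho([0,V])=[0,\check\rho_0]\subset[0,\hat\rho_0]$, so $g=\hat\rho^{-1}\circ\check\rho\colon[0,V]\to[0,V]$ is well defined and, as a composition of two strictly decreasing continuous maps, continuous and strictly increasing. Equivalently, $\omega_{n+1}$ is the unique point satisfying $\hat\rho(\omega_{n+1})=\check\rho(\omega_n)$. Moreover, if $u\in[0,V[$ then $\check\rho(u)>\check\rho_V=0=\hat\rho(V)$, so $g(u)<\hat\rho^{-1}(0)=V$; thus $g$ maps $[0,V[$ into $[0,V[$.

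Next I would prove monotonicity. For every $u\in[0,V[$ we have $\check\rho_u<\hat\rho_u$ by~\eqref{eq:order-point-1}, that is $\hat\rho(g(u))=\check\rho(u)<\hat\rho(u)$, and since $\hat\rho$ is strictly decreasing this forces $g(u)>u$. Starting from $\omega_0=0\in[0,V[$, the previous paragraph gives $\omega_n\in[0,V[$ for all $n$ by induction, hence $\omega_{n+1}=g(\omega_n)>\omega_n$: the sequence $(\omega_n)$ is strictly increasing and bounded above by $V$, so it converges to some $L\in\,]0,V]$ (with $L\ge\omega_1=g(0)>0$).

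Finally I would identify the limit. Suppose, for contradiction, $L<V$. Then $L$ is an interior point of $[0,V]$, where both $\check\rho$ and $\hat\rho$ are continuous, so letting $n\to\infty$ in $\hat\rho(\omega_{n+1})=\check\rho(\omega_n)$ yields $\hat\rho(L)=\check\rho(L)$, which contradicts $\check\rho_L<\hat\rho_L$ from~\eqref{eq:order-point-1}. Therefore $L=V$, as claimed. I do not expect a genuine obstacle in this argument; the only point deserving a little care is the continuity and surjectivity of $\hat\rho$ on the whole interval $[0,V]$ used to make $\hat\rho^{-1}$ unambiguous and to keep the iterates inside $[0,V[$, which follows from Lemma~\ref{lem:monotonicity} together with the boundary values $\check\rho_V=\hat\rho_V=0$ and can be spelled out via the intermediate value theorem.
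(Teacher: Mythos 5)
Your proposal is correct and follows essentially the same route as the paper: well-definedness and monotonicity of the iteration via Lemma~\ref{lem:monotonicity}, strict increase of $\omega_n$ from $\check\rho_u<\hat\rho_u$ on $[0,V[$, and identification of the limit through the fixed-point relation $\hat\rho(L)=\check\rho(L)$, which forces $L=V$. The only cosmetic difference is that you argue the last step by contradiction using continuity at an interior point, whereas the paper passes to the limit in the recursion directly; this changes nothing of substance.
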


\begin{proof}
  By Lemma~\ref{lem:monotonicity}, the functions
  \begin{equation*}
    \check \rho : \left[0, V\right] \to \left[0, \check{\rho}(0)\right],
    \qquad\qquad\qquad
    \hat \rho : \left[0, V\right] \to \left[0, \hat{\rho}(0)\right],
  \end{equation*}
  are strictly decreasing and so invertible. This implies that the sequence
  $\omega_n$, defined in~(\ref{eq:grid_sequence}), is well defined and
  $\omega_n \in \left[0, V\right]$ for every $n \in \N$.
  Note also that~(\ref{eq:check-hat-derivative})
  in Lemma~\ref{lem:monotonicity} implies that the inverse functions
  of $\check \rho$ and $\hat \rho$ are strictly decreasing functions.

  First we claim that $\omega_n < V$ for every $n \in \N$. We proceed by
  induction on $n$. When $n=0$, clearly $\omega_0 = 0 < V$. Assume now that
  $\omega_{n-1} < V$ and so
  $\check{\rho}\left(\omega_{n-1}\right) > 0 = \hat{\rho}\left(V\right)$.
  Hence $\hat{\rho}^{-1} \left(\check{\rho}\left(\omega_{n-1}\right)\right) < V$,
  i.e. $\omega_n < V$, proving the claim.

  We prove now that $\omega_n$ is a strictly increasing sequence.
  Fix $n \in \N$. Since $\omega_n < V$, we deduce, by~(\ref{eq:order-point-2}),
  that $\check{\rho}\left(\omega_n\right) < \hat{\rho}\left(\omega_n\right)$
  and so
  $\hat{\rho}^{-1}\left(\check{\rho}\left(\omega_n\right)\right) > \omega_n$,
  i.e. $\omega_{n+1} > \omega_{n}$.

  Since $\omega_n$ is an increasing sequence, then it has a limit $L$.
  Clearly $L \in [0, V]$. By~(\ref{eq:grid_sequence}), we deduce that
  \begin{equation*}
    L = \hat{\rho}^{-1}\left(\check{\rho}\left(L\right)\right)
    \quad\Longleftrightarrow \quad
    \hat{\rho} \left(L\right) = \check{\rho}\left(L\right)
    \quad\Longleftrightarrow\quad
    L = V,
  \end{equation*}
  concluding the proof.
\end{proof}


\begin{remark}
  \label{rem2}
  {\rm
    If $u_1 < u_2$, then either
    \begin{equation*}
      \check\rho(u_2) < \check\rho(u_1) < \hat\rho(u_2) < \hat\rho(u_1)
    \end{equation*}
    or
    \begin{equation*}
      \check\rho(u_2) \leq \hat\rho(u_2) < \check\rho(u_1)  < \hat\rho(u_1) .
    \end{equation*}
  }
\end{remark}

%
%
%
%
\section{The Cauchy problem}
\label{se:cauchy}

Let us consider an initial density
$\rho_0 \in \LL1 \left(\R; [0, R]\right)$ with finite total variation,
an initial position of the AV $y_0 \in \R$, and 
an open-loop control $u \in \LL1 \left(\R^+; [0, V]\right)$ with finite
total variation.
Following~\cite{AGS2010, ChalonsGoatinSeguin2013, MR3657113},
we define solutions to~\eqref{eq:control-model} as follows. 

\begin{definition}
  \label{def:sol-CP}
  The couple $\left(\rho, y\right)$ provides a solution
  to~\eqref{eq:control-model}
  if the following conditions hold.
  \begin{enumerate}
  \item \label{def:sol-CP:1}
    $\rho \in \C{0} \left(\R^+; \LL1 \left(\R; [0,R]\right)\right)$ and
    $\tv\left(\rho(t)\right) < + \infty$ for all $t \in \R^+$;
    

  \item \label{def:sol-CP:2} $y \in \mathbf{W^{1,1}_{loc}}(\R^+;\R)$;

  \item \label{def:sol-CP:3} 
For every $\kappa\in\R$ and for all $\varphi\in\Cc1 (\R^2;\R^+)$  it holds
\begin{align}
    \int_{\R^+}\int_\R &\left(
    |\rho-\kappa| \pt \varphi +\sgn (\rho-\kappa)(f(\rho)-f(\kappa)) \px\varphi
    \right) dx\, dt 
    +\int_\R |\rho_0-\kappa| \varphi(0,x) \, dx \nonumber \\
    & {+} \, 2 \int_{\R^+} \left(
    f(\kappa)  - \dot y (t) \kappa - \min\{f(\kappa) - \dot y (t) \kappa, F_\alpha(\dot y(t))\} 
    \right) \varphi(t,y(t)) \, dt \geq 0\,; \label{eq:EC}
\end{align}

  \item \label{def:sol-CP:5}
    For a.e. $t >0$, 
    $f \left(\rho\left(t,y(t)_\pm\right)\right) 
    - \dot y(t) \rho \left(t,y(t)_\pm\right)
    \le F_\alpha\left(\dot y(t)\right)$;
      
        \item \label{def:sol-CP:4}
    For a.e. $t > 0$, $\dot y(t) = 
    \min\left\{u(t), v\left(\rho\left(t, y(t)_+\right)\right)\right\}$.

  \end{enumerate}
\end{definition}

\begin{remark}{\rm
We observe that the definition introduced in~\cite{MR3264413}, requiring only the entropy admissibility on $\R^+ \times\, ]-\infty,y(t)[$ and  $\R^+ \times\, ]y(t),+\infty[$, is not sufficiently strong to single out a unique non-classical solution to Riemann problems. For example, the undercompressive shock between $\rho^*_u$ and $0$ traveling with speed $u$ would be an admissible solution in the sense of ~\cite[Definition 4.1]{MR3264413} for the initial datum $\rho_0(x)=\tilde\rho_u$. Instead, condition~\eqref{eq:EC}, as well as its equivalent formulation used in~\cite{MR3657113}, ensures that, if an undercompressive shock is present at $x=y(t)$, it satisfies point~\ref{def:sol-CP:5} in Definition~\ref{def:sol-CP} as an equality.
}
\end{remark}

%
%
\subsection{Wave-front tracking approximation}
\label{sse:wft}

Since solutions to Riemann problems are known analytically
(see details in Appendix~\ref{se:RP}),
we are able to construct
piecewise constant approximations of solutions to~\eqref{eq:control-model}
via the wave-front tracking algorithm,
see~\cite{Bressan_book_hyp_2000, MR1912206} for the general theory.

\begin{definition}\label{def:epswf}
  Given $\eps>0$, we say that the maps
  $ \rho_\eps$, $ y_\eps$, and $u_\eps$
  provide an $\eps$-approximate
  wave-front tracking solution to~\eqref{eq:control-model} if the following
  conditions hold. 
  \begin{enumerate}
  \item $\rho_{\eps}\in \C{0} \left(\R^+; \Ll1(\R; [0,R])\right)$ is
    piecewise constant, with discontinuities occurring along finitely many
    straight lines in the $(t,x)$-plane.
    Moreover jumps of $\rho_{\eps}$ can be shocks, rarefactions,
    or undercompressive shocks,
    and they are indexed by $\jumps(t) = \shocks(t) \cup \raref(t)
    \cup \mathcal{U}(t)$. For simplicity, the jumps of $\rho_\eps$
    are called \textbf{$\rho$-waves}.
    Moreover we call \textbf{classical waves} the $\rho$-waves described by the
    jump sets $\shocks$ and $\raref$.

  \item 
    $u_\eps \in \LL1\left(\R^+; \R^+\right)$
    is a piecewise
    constant function with a finite number of discontinuities.

  \item $y_\eps \in \mathbf{W^{1,1}_{loc}}\left(\R^+; \R\right)$ is a
    piecewise affine function. We refer to it as the \textbf{$y$-wave}.

  \item Along each shock
    $x(t)=x_{\alpha}(t)$, $\alpha\in\shocks(t)$, we have, for a.e. $t>0$,
    \begin{equation*}
      \rho_{\eps}(t,x_{\alpha}(t)_-)< \rho_{\eps}(t,x_{\alpha}(t)_+).
    \end{equation*}
    Moreover, for a.e. $t>0$,
    \begin{equation*}
      \abs{\dot x_{\alpha}(t)-
        \frac{f(\rho_{\eps}(t,x_{\alpha}(t)_-))
          -f(\rho_{\eps}(t,x_{\alpha}(t)_+))}
        {\rho_{\eps}(t,x_{\alpha}(t)_-)
          -\rho_{\eps}(t,x_{\alpha}(t)_+)}} \le \eps.
    \end{equation*}

  \item Along each rarefaction front
    $x(t)=x_{\alpha}(t)$, $\alpha\in\raref(t)$, we have, for a.e. $t>0$,
    \begin{equation*}
      \rho_{\eps}(t,x_{\alpha}(t)_+) < \rho_{\eps}(t,x_{\alpha}(t)_-)
      \leq \rho_{\eps}(t,x_{\alpha}(t)_+) + \eps.
    \end{equation*}
    Moreover, for a.e. $t>0$,
    \begin{equation*}
      \dot x_{\alpha}(t) \in \left[ f'(\rho_{\eps}(t,x_{\alpha}(t)_-)),
        f'(\rho_{\eps}(t,x_{\alpha}(t)_+)) \right] \,.
    \end{equation*}

  \item \label{point:NP}
    Along an undercompressive shock we have, for a.e. $t>0$,
    $x_\alpha(t)=y_\eps(t)$, $\alpha \in \mathcal U$, and
    \begin{equation*}
      \rho_{\eps}(t,y_\eps(t)_-) = \hat \rho_{u_\eps(t)}
      > \check \rho_{u_\eps(t)}
      = \rho_{\eps}(t,y_\eps(t)_+).
    \end{equation*}
    Moreover, for a.e. $t>0$,
    \begin{equation*}
      \dot x_{\alpha}(t) = {\dot y}_\eps(t)=u_\eps(t).
    \end{equation*}

  \item\label{point:initial} The following estimates hold
    \begin{equation*}
      \norm{\rho_{\eps}(0, \cdot) - \rho_{0}}_{L^1(\R)} < \eps,
      \qquad
      \norm{u_{\eps} - u}_{L^1(0, +\infty)} < \eps.
    \end{equation*}

  \item\label{point:7} For a.e. $t > 0$
    \begin{equation*}
      {\dot y}_\eps(t) = \min \left\{u_\eps (t),\, v \left(\rho_\eps 
        \left(t , {y}_\eps(t)_+\right)\right)\right\}.
    \end{equation*}
  \end{enumerate}
\end{definition}

\begin{remark}
  \label{rmk:wave_names}
  {\rm 
    With relation to Definition~\ref{def:epswf},
    we recall here the various types of waves.
    With the terms \emph{$\rho$-wave} and \emph{$y$-wave} we denote
    respectively a
    discontinuity for $\rho_\eps$ and the curve
    $t \mapsto \left(t, y_\eps(t)\right)$.
    \emph{Shocks}, \emph{rarefactions} and \emph{undercompressive shocks}
    are all $\rho$-waves. Moreover an \emph{undercompressive shock} is also a
    $y$-wave. Finally, by \emph{classical wave} we mean a $\rho$-wave, which is
    not a $y$-wave.
  }
\end{remark}

We describe here a possible algorithm for constructing a sequence of
approximate wave-front tracking solutions. First of all,
given $\nu \in \N$,
let us define the following grids on the interval $[0,R]$
for density and on $[0,V]$ for velocity.

\smallskip

{\bf Step 1.} Let $u_0^\nu=0$ and set recursively
\[
u_j^\nu > u_{j-1}^\nu \quad\hbox{such that} \quad \hat\rho_{u_j^\nu} = \check\rho_{u_{j-1}^\nu}
\quad\hbox{for} \quad j\geq 1.
\] 
and stop the procedure at $j=J_\nu$ such that $V-u_{J_\nu+1}^\nu < 2^{-\nu}$, see Figure~\ref{fig:gridA}.
By Lemma~\ref{le:grid_sequence},
the iterative procedure is finite.
We set $\rho_j^\nu = \check\rho_{u_{J_\nu-j}^\nu}$ for $j=0,\ldots, J_\nu$.

\smallskip

{\bf Step 2.} Divide the interval $[0, u_{1}^\nu]$ into $2^\nu$ parts, define the grid points
\[
u_{k,0}^\nu := k u_{1}^\nu 2^{-\nu}, \qquad k=0,\ldots, 2^\nu-1,
\]
the corresponding $\rho_{J_\nu+k}^\nu=\hat\rho_{u_{k,0}^\nu} \in \ ]\check\rho_0, \hat\rho_0[$
and consider the recursive sequence
\[
u_{k,i}^\nu \in \left] u_{i}^\nu,  u_{i+1}^\nu \right[ \quad\hbox{such that} \quad \hat\rho_{u_{k,i}^\nu} = \check\rho_{u_{k,i-1}^\nu}
\quad\hbox{for} \quad i=1,\ldots,J_\nu.
\]



\smallskip

{\bf Step 3.} To complete the mesh, we divide the remaining sub-interval 
$[\hat\rho_0,R]$
in $2^{\nu}$ parts, defining
\begin{align*}
\rho_\ell^\nu:= \hat\rho_0 + \ell (R-\hat\rho_0) 2^{-\nu}, &&\ell=1,\ldots, 2^{\nu}.
\end{align*}

\smallskip

{\bf Step 4.} Relabeling the points defined above, we obtain the grids $\mathcal{M}_\nu:=\left\{ \rho_i^\nu \right\}_{i=0}^{M_\nu}\subset [0,R]$ 
(with $\rho_0^\nu=0$ and $\rho_{M_\nu}^\nu=R$)
and $\mathcal{U}_\nu:=\left\{ u_i^\nu \right\}_{i=0}^{N_\nu}\subset [0,V]$
(with $u_0^\nu=0$ and $u_{N_\nu}^\nu=V$), 
where the points are labeled in increasing order. We also set 
\begin{align*}
&\delta_\rho^\nu:=\min_{i=1,\ldots,M_\nu} \left(\rho_{i}^\nu -\rho_{i-1}^\nu\right) \geq c_\nu 2^{-\nu}, 
&& \delta_u^\nu:=\min_{i=1,\ldots,M_\nu} \left(u_{i}^\nu -u_{i-1}^\nu\right) \geq c_\nu 2^{-\nu},\\
&\eps_\rho^\nu:=\max_{i=1,\ldots,M_\nu} \left(\rho_{i}^\nu -\rho_{i-1}^\nu \right)\leq C_\nu 2^{-\nu} ,
&& \eps_u^\nu:=\max_{i=1,\ldots,M_\nu} \left(u_{i}^\nu -u_{i-1}^\nu\right) \leq C_\nu 2^{-\nu},
\end{align*}
for some $c_\nu, C_\nu >0$ depending on $\alpha$, $f''$ and $J_\nu$.

\smallskip

Also, let $f_\nu:[0,R] \to [0,+\infty[$ be the piecewise linear function such that $f_\nu(\rho_i^\nu) = f(\rho_i^\nu)$ for each $\rho_i^\nu\in\mathcal{M}_\nu$, see Figure~\ref{fig:gridB}.

\begin{figure}
  \centering
  \begin{subfigure}[b]{0.45\textwidth}
\centering
    \begin{tikzpicture}
      \draw[<->] (0., 5.) -- (0., 0.) -- (5, 0.);
      \draw[very thick] (0., 4.5) to [out = -20,  in = 120] (4.5, 0.);
      \draw[very thick] (0., 3.) to [out = -60,  in = 160] (4.5, 0.);
      \draw (0., 3.) -- (2.4, 3.);
      \draw (2.4, 3.) -- (2.4, 0.);
      \draw (0., 0.8) -- (4., 0.8);
      \draw (4., 0.8) -- (4., 0.);
      \draw (0., 0.18) -- (4.4, 0.18);
      \draw[dashed] (0.5, 0.) -- (0.5, 4.3);
      \draw[dashed] (0., 4.3) -- (0.5, 4.3);
      \draw[dashed] (0., 2.25) -- (3.05, 2.25);
      \draw[dashed] (3.05, 0.) -- (3.05, 2.25);
      \draw[dashed] (1., 0.) -- (1., 4.05);
      \draw[dashed] (0., 4.05) -- (1., 4.05);
      \draw[dashed] (0., 1.75) -- (3.45, 1.75);
      \draw[dashed] (3.45, 0.) -- (3.45, 1.75);
      \draw[dashed] (1.5, 0.) -- (1.5, 3.75);
      \draw[dashed] (0., 3.75) -- (1.5, 3.75);
      \draw[dashed] (0., 1.35) -- (3.7, 1.35);
      \draw[dashed] (3.7, 0.) -- (3.7, 1.35);
      \draw[dashed] (2., 0.) -- (2., 3.35);
      \draw[dashed] (0., 3.35) -- (2., 3.35);
      \draw[dashed] (0., 1.05) -- (3.8, 1.05);
      \draw[dashed] (3.85, 0.) -- (3.85, 1.05);
      \draw[dashed] (0., 0.55) -- (4.1, 0.55);
      \foreach \Point in {(0.,4.5), (0.,3.), (0., 0.8), (0., 4.3), (0., 2.25), (0., 4.05), (0., 1.75), (0., 3.75), (0., 1.35), (0., 3.35), (0., 1.05)} {\node at \Point {\textbullet};}
      \foreach \Point in {(0.,0.), (4.5, 0.), (2.4, 0.), (4., 0.), (0.5, 0.), (3.05, 0.), (1., 0.), (3.45, 0.), (1.5, 0.), (3.7, 0.), (2., 0.), (3.85, 0.)} {\node at \Point {\textbullet};}
      \draw (-0.1, 0.) node[below] {$0$};
      \draw (2.4, 0.) node[below] {$u_1^\nu$};
      \draw (4., 0.) node[below] {$u_{2}^\nu$};
      \draw (4.6, 0.) node[below] {$V$};
      \draw (5., 0.) node[below] {$u$};
      \draw (0., 5.) node[left] {$\rho$};
      \draw (0., 4.5) node[left] {$\hat\rho_0$};
      \draw (0., 3.) node[left] {$\check\rho_0$};
      \draw (0., 0.8) node[left] {$\rho_{J_\nu-1}^\nu$};
      \draw (0., 0.18) node[left] {$\rho_{J_\nu-2}^\nu$};
      \draw (2.4, 3.1) node[right] {$\mathbf{\hat\rho(\cdot)}$};
      \draw (0.9, 1.) node[right] {$\mathbf{\check\rho(\cdot)}$};
      \draw (3., 4.5) -- (4., 4.5);
      \draw (4., 4.5) node[right] {Step 1};
      \draw[dashed] (3., 4.) -- (4., 4.);
      \draw (4., 4.) node[right] {Step 2};
    \end{tikzpicture}    
  \caption{Construction of the grids $\mathcal{M}_\nu$ and $\mathcal{U}_\nu$.}
  \label{fig:gridA}
      \end{subfigure} 
\hfill
\begin{subfigure}[b]{0.45\textwidth}
  \centering
   \begin{tikzpicture}
      \draw[<->] (0., 5.) -- (0., 0.) -- (5, 0.);
      \draw (0., 0.) to [out = 80,  in = 180] (2.25, 4.) to [out = 0,  in = 100] (4.5, 0.);
      \draw (0., 0.) to [out = 80,  in = 180] (1.6875, 3.) to [out = 0,  in = 100] (3.375, 0.);
      \draw (0.7, 3.) -- (3.8, 3.);
      \draw (0.7, 3.) -- (0.3, 1.6);
      \draw[dotted] (0.7, 0.) -- (0.7, 3.);
      \draw[dotted] (3.8, 0.) -- (3.8, 3.);
      \draw[dotted] (0.3, 0.) -- (0.3, 1.6);
      \draw[dashed] (0.35, 1.9) -- (1.4, 3.8);
      \draw[dashed] (0.45, 2.25) -- (2.1, 4);
      \draw[dashed] (0.55, 2.55) -- (2.8, 3.9);
      \draw[dashed] (0.6, 2.75) -- (3.4, 3.55);
      \draw[thick] (0., 0.) -- (0.3, 1.6) -- (0.35, 1.9) -- (0.45, 2.25) -- (0.55, 2.55) -- (0.6, 2.75) -- (0.7, 3.)
                          -- (1.4, 3.8) -- (2.1, 4) -- (2.8, 3.9) -- (3.4, 3.55) -- (3.8, 3.) -- (4.1, 2.1) -- (4.5, 0.);
      \draw (-0.1, 0.) node[below] {$0$};
      \draw (5., 0.) node[below] {$\rho$};
      \draw (0., 5.) node[left] {$f$};
      \draw (4.5, 0.) node[below] {$R$};
      \draw (3.3, 0.) node[below] {$\alpha R$};
      \draw (3.9, 0.) node[below] {$\hat\rho_0$};
      \draw (0.7, 0.) node[below] {$\check\rho_0$};
      \draw (0.3, 0.) node[below] {$\rho_j^\nu$};
    \end{tikzpicture}    
  \caption{Construction of $f_\nu$.}
  \label{fig:gridB}
      \end{subfigure}  
      \caption{Visual representation of the construction of the grids
        for $\rho_\nu$ and $u_\nu$ and of the construction of $f_\nu$.}  
\label{fig:grid}
\end{figure}
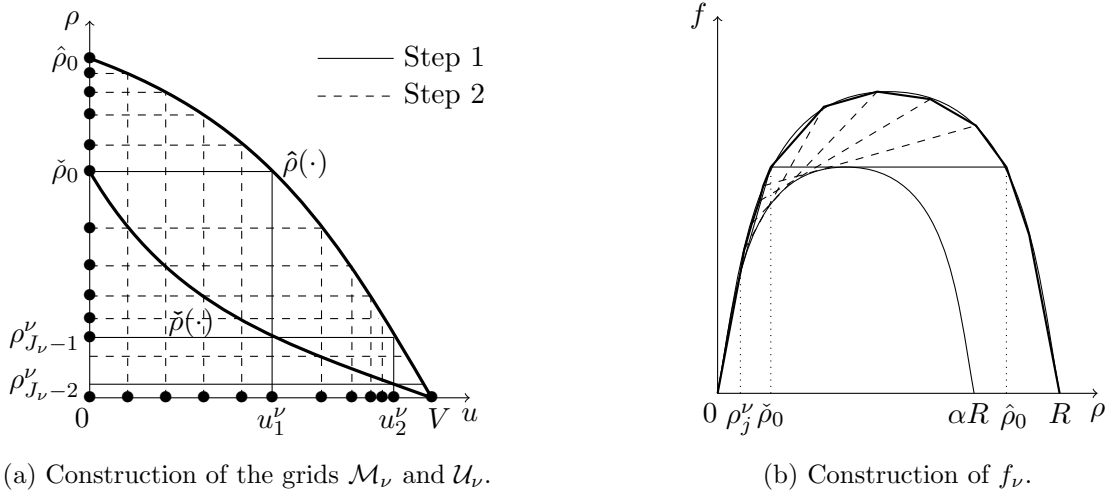

\medskip

Given an initial traffic density
$\rho_0 \in \LL1\left(\R; [0, R]\right)$ with finite total variation,
we consider a sequence of piece-wise constant functions
$\rho_{0,\nu}:\R\to\mathcal{M}_\nu$
such that $\rho_{0,\nu}$
has a finite number of discontinuities and
\begin{equation} 
  \label{eq:rho0approx}
  \lim_{\nu\to+\infty} \norm{\rho_{0,\nu} - \rho_{0}}_{\LL1\left(\R\right)} =0
  \qquad\hbox{and}\qquad
  \tv(\rho_{0,\nu}) \leq \tv(\rho_{0})
  \qquad\hbox{for all}~\nu\in\N.
\end{equation}
Besides, given $u \in \LL1\left(\R^+; [0, V]\right)$
with finite total variation,
we fix a sequence of piecewise constant functions
$u_\nu: \R^+\to \mathcal{U}_\nu$,
such that $u_\nu$ has a finite number of discontinuities and 
\begin{equation}
  \label{eq:uapprox}
  \lim_{\nu\to+\infty} \norm{u_\nu - u}_{\LL1 \left(\R^+\right)} =0
  \qquad\hbox{and}\qquad
  \tv(u_\nu) \leq \tv(u)
  \qquad\hbox{for all}~\nu\in\N.
\end{equation}
For every $\nu \in \N \setminus \{0\}$,
we apply the following procedure. At time $t=0$, we
solve all the (classical) Riemann problems determined by a discontinuity of $\rho_{0,\nu}$
and the constrained Riemann problem at the AV initial position $y_0$
(see~Appendix~\ref{se:RP}), replacing the function $f$ by $f_\nu$.
In this way, for small times $t>0$ we obtain a piecewise constant function $\rho_\nu=\rho_\nu(t,x)$ with values in $\mathcal{M}_\nu$
and whose jump discontinuities satisfy the Rankine-Hugoniot condition
\begin{equation*}
  \dot x_\alpha (t) = \frac{f_\nu(\rho_\nu(t,x_\alpha(t)_-))
    - f_\nu(\rho_\nu(t,x_\alpha(t)_+))}{\rho_\nu(t,x_\alpha(t)_-)
    - \rho_\nu(t,x_\alpha(t)_+) }
  = \frac{f(\rho_\nu(t,x_\alpha(t)_-))
    - f(\rho_\nu(t,x_\alpha(t)_+))}{\rho_\nu(t,x_\alpha(t)_-)
    -\rho_\nu(t,x_\alpha(t)_+) },
\end{equation*}
thus providing  a weak solution of~\eqref{eq:LWR}.
In particular, every rarefaction wave is approximated by a rarefaction fan, formed by
rarefaction shocks of strength less than $\eps_\rho^\nu$.
We repeat the previous construction at every time $\bar t$
at which the following possibilities occur:
\begin{enumerate}
\item two classical waves (shock or rarefaction jumps) of $\rho_\nu$ interact together;
  
\item a classical discontinuity of $\rho_\nu$ interacts with $y_\nu$;

\item $u_\nu(\bar t_-) \ne u_\nu(\bar t_+)$.
\end{enumerate}
In this way we construct a piecewise constant function $\rho_\nu$ and 
a piecewise linear function $y_\nu$.

\begin{remark}{\rm
    By slightly modifying the wave speeds, we may assume that, at every
    positive time $t$, at most one of the previous interactions happens.
  }
\end{remark}

\begin{remark}
  \label{rem:2}
  {\rm As usual, since rarefaction waves are generated only at time $t=0$
    or along the AV trajectory,
    we need to split rarefaction waves into rarefaction
    fans just at time $t=0$ and possibly at the discontinuity points for
    $u_\nu$. }
\end{remark}

Given a wave-front tracking approximate solution
$\left(\rho_\eps, y_\eps, u_\eps\right)$ to~\eqref{eq:control-model},
we define the Glimm type functional
\begin{equation} \label{eq:glimm}
  \Upsilon (t) = \Upsilon(\rho_\eps(t,\cdot),u_\eps)
  := \tv\left(\rho_\eps(t,\cdot)\right) + 2R + \gamma(t)
  + \frac{6}{\beta} \tv\left(u_\eps(\cdot); [t,+\infty[\right),
\end{equation}
where $\gamma$ is given by 
\begin{equation*}
  \gamma(t):=
  \begin{cases}
    -2\left(\hat\rho_{u_\eps(t)}-\check\rho_{u_\eps(t)}\right)
    & \hbox{if}~\rho_\eps(t,y_\eps(t)_-)=
    \hat\rho_{u_\eps(t)},~\rho_\eps(t,y_\eps(t)_+)= \check\rho_{u_\eps(t)},
    \\
    0
    &
    \hbox{otherwise}.
  \end{cases}
\end{equation*}
It is clear that $\Upsilon$ is well defined for a.e.
$t \geq 0$ and it changes only at discontinuity points of $\bar u_\eps$ or at interaction times. Moreover, we observe that
$\Upsilon(t)\geq \tv\left(\rho_\eps(t,\cdot)\right)  \geq 0$ and,
with the above choices \eqref{eq:rho0approx}, \eqref{eq:uapprox}, 
$\Upsilon(0)\leq \tv(\rho_0) + 2R + \frac{6}{\beta} \tv(u)$.
The functional $\Upsilon$ will serve to provide an uniform estimate on the total variation
of the approximate solutions $\rho_\nu$ constructed above.

%
%
\subsection{Interaction estimates}
\label{sse:interactions}

This subsection we will show the following property of the functional $\Upsilon$.
\begin{prop} \label{prop:upsilon}
Let $\left\{\rho_\nu,y_\nu,u_\nu\right\}_{\nu\in\N}$ be the sequence of approximate solutions constructed in Section~\ref{sse:wft}.
For any $\nu\in\N$, at any wave interaction or jump in $u_\nu$ the map $t\mapsto\Upsilon(t)=\Upsilon(\rho_\nu(t,\cdot),u_\nu)$
either decreases by at least $\min\left\{ \delta_\rho^\nu, 6\delta_u^\nu / \beta \right\}$, or it remains constant and the number of waves does not increases.
\end{prop}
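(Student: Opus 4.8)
The plan is to carry out a case-by-case analysis of all the ways the functional $\Upsilon(t)$ in~\eqref{eq:glimm} can change, namely (i) interaction of two classical $\rho$-waves away from $y_\nu$, (ii) interaction of a classical $\rho$-wave with the $y$-wave, and (iii) a jump of $u_\nu$. In each case I track separately the four contributions $\tv(\rho_\nu(t,\cdot))$, the constant $2R$, the undercompressive correction $\gamma(t)$, and $\frac{6}{\beta}\tv(u_\nu(\cdot);[t,+\infty[)$, and I show the net change is either a decrease bounded below by $\min\{\delta_\rho^\nu,6\delta_u^\nu/\beta\}$ or zero without an increase in the wave count.

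First I would dispose of case (i): this is the standard LWR interaction estimate on the grid $\mathcal M_\nu$. Since $f_\nu$ is piecewise linear and concave, any interaction of two classical waves produces an outgoing wave pattern with $\tv(\rho_\nu)$ non-increasing; moreover, because all involved states lie on $\mathcal M_\nu$, if $\tv$ strictly decreases it does so by at least $2\delta_\rho^\nu \ge \delta_\rho^\nu$ (two fronts merging into one or a shock absorbing a rarefaction front), and if $\tv$ is unchanged the number of fronts does not grow. The terms $2R$, $\gamma$ and the $u_\nu$-term are untouched since neither $y_\nu$ nor $u_\nu$ is involved, so $\Upsilon$ behaves as required.

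Next, case (ii), the interaction of a classical wave with $y_\nu$: here I would invoke the explicit solution of the constrained Riemann problem from Appendix~\ref{se:RP}, distinguishing whether an undercompressive shock ($\rho_\nu(t,y_\nu(t)_-)=\hat\rho_{u_\nu}$, $\rho_\nu(t,y_\nu(t)_+)=\check\rho_{u_\nu}$) is present before and/or after the interaction. When a classical front hits $y_\nu$ and no undercompressive shock is created, $\gamma$ stays $0$ and one checks $\tv(\rho_\nu)$ does not increase; when the interaction creates or destroys the undercompressive structure, the jump of size $\hat\rho_{u_\nu}-\check\rho_{u_\nu}$ appearing (resp. disappearing) in $\tv(\rho_\nu)$ is exactly compensated by the $\mp 2(\hat\rho_{u_\nu}-\check\rho_{u_\nu})$ in $\gamma$, together with the classical wave that is absorbed or emitted; the algebra here is essentially that of~\cite{MR3657113, LiardPiccoli} adapted to the grid, and one gets either strict decrease by $\ge \delta_\rho^\nu$ or constancy with no new fronts. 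The term $\frac{6}{\beta}\tv(u_\nu;[t,+\infty[)$ is constant since $u_\nu$ does not jump at such a time.

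The main obstacle is case (iii), a jump of $u_\nu$ from $u^- = u_\nu(\bar t_-)$ to $u^+ = u_\nu(\bar t_+)$ with $|u^+-u^-|\ge \delta_u^\nu$. Here the term $\frac{6}{\beta}\tv(u_\nu;[t,+\infty[)$ drops by exactly $\frac{6}{\beta}|u^+-u^-| \ge \frac{6}{\beta}\delta_u^\nu$, and I must show that the simultaneous changes in $\tv(\rho_\nu)$ and $\gamma$ do not overcompensate this gain. Using Lemma~\ref{lem:monotonicity} — in particular the Lipschitz bound $|\check\rho'|,|\hat\rho'| \le 1/\beta$ from~\eqref{eq:deriv-check}–\eqref{eq:deriv-hat} and the ordering~\eqref{eq:order-point-2} — together with Remark~\ref{rem2} describing the relative position of $\check\rho(u^\pm),\hat\rho(u^\pm)$, I would bound the new $\rho$-waves emitted at $y_\nu$ when the desired speed changes: the traces $\rho_\nu(t,y_\nu(t)_\pm)$ must be re-adjusted from the $u^-$-pattern to the $u^+$-pattern, producing classical waves whose total strength, plus the change in the undercompressive jump, is controlled by a fixed multiple (at most $6/\beta$, by the choice of constant in~\eqref{eq:glimm}) of $|u^+-u^-|$. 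Since the grid construction in Steps 1–4 was designed precisely so that $\hat\rho_{u_{k,i}^\nu}=\check\rho_{u_{k,i-1}^\nu}$, these emitted waves land on $\mathcal M_\nu$ and the discretized solution stays grid-valued. Carefully summing the $\tv(\rho_\nu)$ increase and the $\gamma$-variation, both $\le \frac{6}{\beta}|u^+-u^-|$ in the worst case after using the Lipschitz estimates, and comparing with the $\frac{6}{\beta}|u^+-u^-|$ gained from the $u$-variation term, yields a net non-positive change; tracking the inequalities shows the decrease is by at least $\min\{\delta_\rho^\nu,6\delta_u^\nu/\beta\}$ unless everything cancels exactly, in which case the number of waves does not increase. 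The delicate point, and where I expect to spend most of the effort, is verifying that the constant $6/\beta$ is indeed large enough to absorb both the $\tv(\rho_\nu)$ increase and the $|\gamma|$-change in every sub-case of Remark~\ref{rem2}, possibly also accounting for a rarefaction fan being re-emitted at $y_\nu$ (Remark~\ref{rem:2}); this is a finite but somewhat tedious enumeration of geometric configurations of $\fhi_{u^-}$ versus $\fhi_{u^+}$ relative to the graph of $f$.
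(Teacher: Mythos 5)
Your decomposition into (i) classical--classical interactions, (ii) classical wave hitting $y_\nu$, (iii) jumps of $u_\nu$, and your bookkeeping of the four terms of $\Upsilon$ in \eqref{eq:glimm}, is the same architecture as the paper's proof, and cases (i)--(ii) are handled essentially as the paper does (the paper disposes of (ii) by the interaction analysis of \cite[Lemma 2]{MR3264413}). The gap is in case (iii), which is the heart of the proposition. First, the estimate you lean on, ``$|\check\rho'|,|\hat\rho'|\le 1/\beta$'', is not what Lemma~\ref{lem:monotonicity} provides: \eqref{eq:deriv-check} gives $-1/\beta\le\check\rho'(u)<0$, but \eqref{eq:deriv-hat} only gives $\hat\rho'(u)\le -1/B$, i.e.\ a bound on the steepness of $\hat\rho$ from below, and the lemma claims Lipschitz continuity only for $\check\rho$ and $\rho^*$. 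From the identity $\hat\rho'(u)=\bigl(\hat\rho_u-\tilde\rho_u\bigr)/\bigl(f''(\eta)(\hat\rho_u-\tilde\rho_u/\alpha)\bigr)$ one only gets $|\hat\rho'|\le\frac1\beta\,\frac{\hat\rho_u-\tilde\rho_u}{\hat\rho_u-\tilde\rho_u/\alpha}$, whose control requires an extra argument and in any case does not deliver the constant you assert; so the claim that the emitted wave strengths plus the $\gamma$-variation are bounded by $\frac6\beta|u^+-u^-|$ is unsubstantiated as written.

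Second, even granting such a bound, your conclusion (``net non-positive change; if everything cancels exactly, the number of waves does not increase'') does not prove the statement: precisely in the sub-cases where a $u$-jump creates an undercompressive shock (your $\mathcal F\to UC$ and $UC\to UC$ transitions, Figures~\ref{NSleftB} and~\ref{2W}) the number of waves strictly increases, so the ``constant $\Upsilon$'' alternative is not available and a strict, quantified decrease must be shown. The paper obtains it by a different mechanism. When the AV carries no discontinuity or a classical shock ($\F_{u^-}$- or $C_{u^-}$-wave, Lemmas~\ref{Y1} and~\ref{Y5}), the changes in $\tv(\rho_\nu)$ and $\gamma$ cancel exactly, and the full decrease $\frac6\beta|u^+-u^-|\ge\frac6\beta\delta_u^\nu$ comes from the control-variation term alone. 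When a $UC_{u^-}$-wave is present (Lemma~\ref{Y2}), the potential increase $2\Delta$ of $\tv+\gamma$ (with $\Delta$ equal to $\hat\rho_{u^-}-\check\rho_{u^-}$, $\hat\rho_{u^-}-\hat\rho_{u^+}$, or $\check\rho_{u^+}-\check\rho_{u^-}$, depending on the configuration of Remark~\ref{rem2}) is dominated because strict concavity forces $|u^+-u^-|\ge\frac{\beta}{2}\Delta$: this is a chord-slope/Taylor comparison on $f$ at the specific states involved (inequalities \eqref{lklk}, \eqref{lklk2}, \eqref{lklk2222}, \eqref{lklk222}), not a Lipschitz property of $u\mapsto\hat\rho_u,\check\rho_u$. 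Then $\Delta\Upsilon\le 2\Delta-\frac6\beta\cdot\frac\beta2\Delta=-\Delta\le-\delta_\rho^\nu$, since the densities involved are distinct points of the grid $\mathcal M_\nu$. This is exactly where the quantity $\min\{\delta_\rho^\nu,6\delta_u^\nu/\beta\}$ in the statement comes from; your sketch, as it stands, does not produce it.
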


\begin{proof}
  We will detail the different types of interactions separately.
  To this end, we introduce the following notations:
  \begin{itemize}
  \item \emph{$\F_u$-wave}: a wave denoting the AV trajectory with
    maximum speed $u$ without discontinuity in $\rho$. The notation is
    to indicate a fictitious wave.

  \item \emph{$\NF_u$-wave}: a wave denoting the AV trajectory with
    maximum speed $u$ and discontinuity in $\rho$. The notation is to
    indicate a non fictitious wave. We can distinguish two cases:
    \begin{itemize}
    \item \emph{$UC_u$-wave}: a wave $(\rho_l,\rho_r)$ denoting the AV
      trajectory with maximum speed $u$ verifying $\rho_l=\hat \rho_u$
      and $\rho_r=\check \rho_u$. The notation indicates an
      undercompressive shock.

    \item \emph{$C_u$-wave}: a wave $(\rho_l,\rho_r)$ denoting the AV
      trajectory with speed $u$ verifying $\rho_l<\rho_r$. The
      notation indicates a classical shock.
    \end{itemize}
  \end{itemize}
  Let us consider an interaction occurring at time $t=\bar t$ away
  from the AV trajectory.  In this case, either two shocks collide, or
  a shock and a rarefaction front interact. In both cases the number
  of waves diminishes.  Moreover,
  $\tv(\rho_\nu(\bar t_+,\cdot)) \leq \tv(\rho_\nu(\bar t_-,\cdot))$,
  while the other terms in~\eqref{eq:glimm} remain constant and we
  conclude that $\Upsilon(\bar t _+)\leq \Upsilon(\bar t_-)$.

  We then focus on events involving the AV trajectory.  Interactions
  between classical waves and $y_\nu$ have been studied
  in~\cite{MR3264413}.  Since the functional used there is equivalent
  to the one defined in~\eqref{eq:glimm} when the control $u_\nu$ does
  not jump, we can conclude as in~\cite[Lemma 2]{MR3264413} that at
  any interactions of this type either $\Upsilon$ decreases of at
  least $\delta_\rho^\nu$, or remains constant and the number of waves
  does not increases.

  Therefore, we focus here on the situations in which a jump in $u$
  occurs.


\begin{lemma}
  \label{Y1}
  Assume that, at time $t = \bar t$, the control jumps from
  $u^-=u(\bar t_-)$ to $u^+=u(\bar t_+)$ and that
  we have a $\F_{u^-}$-wave at $t=\bar t_-$. 
  We have the following two cases.
  \begin{enumerate}
  \item At $\bar t_+$ we have a $\F_{u^+}$-wave and
    no wave is produced (see Figure~\ref{NSleftA}).

  \item At $\bar t_+$ we have a $UC_{u^+}$-wave and
    the number of waves increases (see Figure~\ref{NSleftB}).
  \end{enumerate}
  In both cases, we have the estimate
  \begin{equation*}
    \Delta \Upsilon(\bar t) = -\frac{6}{\beta } \abs{u^+-u^-} 
    \le - \frac{6}{\beta} \delta_u^\nu < 0.
  \end{equation*}
\end{lemma}

\begin{proof}
  Before time $\bar t$, we have a $\F_{u^-}$-wave and we denote the density
  by $\rho$. Thus, the speed of the AV is $w(\rho; u^-)$ and
  $\rho \not\in \left]\check \rho_{u^-},\hat \rho_{u^-}\right[$.
  At $t=\bar t$, a jump in the control from $u^-$ to $u^+$ occurs
  and two cases may happen:  
  \begin{enumerate}
  \item $\rho \not \in \left]\check \rho_{u^+}, \hat \rho_{u^+}\right[$:
    no new wave is produced (see Figure \ref{NSleftA}). Thus, we have: 
    \begin{align*} 
      & \Delta \tv(\rho(\bar t,\cdot))=0, \\
      & \Delta \gamma(\bar t)=0,\\
      & \Delta \tv(u;[\bar t,+\infty[)=-\abs{u^+-u^-}.
    \end{align*}
    We conclude that
    $\Delta \Upsilon(\bar t) = -\frac{6}{\beta }\abs{u^+-u^-} 
    \le -\frac{6}{\beta} \delta_u^\nu < 0 $.

\item $\rho \in \left]\check \rho_{u^+}, \hat \rho_{u^+}\right[$:
  a $UC_{u^+}$-wave arises together with two shocks
  $(\rho,\hat \rho_{u^+})$ and  $(\check \rho_{u^+},\rho)$
  (see Figure \ref{NSleftB}). Thus, we have: 
  \begin{align*} 
    & \Delta \tv(\rho(\bar t,\cdot))=\abs{ \rho-\check \rho_{u^+}}
      +\abs{\check \rho_{u^+}-\hat \rho_{u^+}}+\abs{\hat \rho_{u^+}-\rho}
      =2\abs{\check \rho_{u^+}-\hat \rho_{u^+}}, \\
    & \Delta \gamma(\bar t)=-2\abs{\check \rho_{u^+}-\hat \rho_{u^+}},\\
    & \Delta \tv(u;[\bar t,\infty))=-\abs{u^+-u^-}.
  \end{align*}
  We conclude that $\Delta \Upsilon(\bar t) = -\frac{6}{\beta} \abs{u^+-u^-}
  \le -\frac{6}{\beta} \delta_u^\nu < 0$.
 \end{enumerate}
 This concludes the proof.
\end{proof}
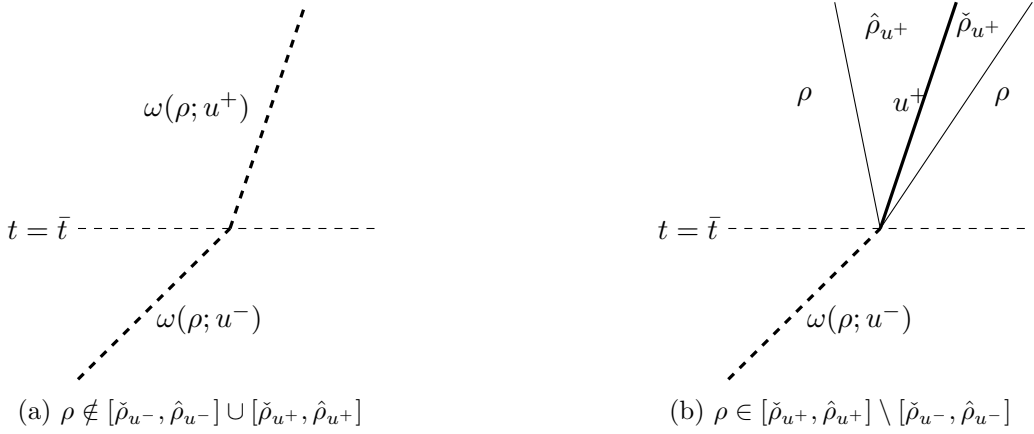
\begin{figure}[t]
\centering
\begin{subfigure}[b]{0.45\textwidth}
\centering
 \begin{tikzpicture}[scale=2]
\draw[dashed,very thick] (0,0) -- (1,1);
\draw[dashed,very thick] (1,1) -- (1.5,2.5);
\draw[dashed] (0,1) -- (2,1);
\draw (0,1) node[left] {$t=\bar t$};
\draw (0.45,0.4) node[right] {$\omega(\rho;u^-)$};
\draw (1.2,1.8) node[left] {$\omega(\rho;u^+)$};
\end{tikzpicture}
\caption{$ \rho \notin [\check \rho_{u^-},\hat \rho_{u^-}] \cup [\check \rho_{u^+},\hat \rho_{u^+}]$  }
\label{NSleftA}
\end{subfigure} 
\hfill
\begin{subfigure}[b]{0.45\textwidth}
\centering
\begin{tikzpicture}[scale=2]
\draw[dashed,very thick] (0,0) -- (1,1);
\draw[very thick] (1,1) -- (1.5,2.5);
\draw[dashed] (0,1) -- (2,1);
\draw (0,1) node[left] {$t=\bar t$};
\draw (0.45,0.4) node[right] {$\omega(\rho;u^-)$};
\draw (1.2,2) node[below] {$u^+$};
\draw (1,1) -- (0.7,2.5);
\draw (1,1) -- (2,2.5);
\draw (0.5,2) node[below] {$\rho$};
\draw (1.8,2) node[below] {$\rho$};
\draw (1.65,2.5) node[below] {$\check \rho_{u^+}$};
\draw (1.05,2.5) node[below] {$\hat \rho_{u^+}$};
\end{tikzpicture}
\caption{$ \rho \in [\check \rho_{u^+},\hat \rho_{u^+}]\setminus [\check \rho_{u^-},\hat \rho_{u^-}]$ }
\label{NSleftB}
\end{subfigure}
\caption{\label{NSleft} Jump from $u^-$ to $u^+$
  at $t=\bar t$  with a $\F_{u^-}$-wave at $t=\bar t_-$.}
\end{figure}


\begin{lemma}
  \label{Y5}
  Assume that, at time $t = \bar t$, the control jumps from
  $u^- = u \left(\bar t_-\right)$ to $u^+ = u \left(\bar t_+\right)$
  and that we have a $C_{u^-}$-wave 
  $(\rho_l,\rho_r)$ at $t = \bar t_-$.
  Then, at $t=\bar t_+$, we have a $\F_{u^+}$-wave and 
  no wave is produced (see Figure~\ref{NSleft3}). Moreover
  \begin{equation*}
    \Delta \Upsilon(\bar t) = -\frac{6}{\beta}\abs{ u^+-u^- }
    \le - \frac{6}{\beta} \delta_u^\nu <0.  
  \end{equation*}
\end{lemma}
\begin{proof}
  Since $(\rho_l,\rho_r)$ is a $C_{u^-}$-wave, it holds
  $\rho_l\leq \check\rho_{u^-}$ and $\rho_r\geq \hat\rho_{u^-}$.
  Therefore, by Lemma~\ref{lem:monotonicity}, we have
  $\rho_l\leq \check\rho_{u^+}$ if $u^+ < u^-$ and
  $\rho_r \geq \hat\rho_{u^+}$ if $u^+ > u^-$.  In both cases the
  classical shock $(\rho_l,\rho_r)$ satisfies
  the constraint~(\ref{eq:constraint}) at the position
  $y_\nu(\bar t)$ of the AV.
  Thus:
  \begin{align*}
    & \Delta \tv(\rho(\bar t,\cdot))=0, \\
    & \Delta \gamma(\bar t)=0,\\
    & \Delta \tv(u;[\bar t,+\infty[)=-\abs{u^+-u^- }.
  \end{align*}
  Therefore
  \begin{equation*}
    \Delta \Upsilon(\bar t) = -\frac{6}{\beta} \abs{u^+-u^- }
    \le -\frac{6}{\beta} \delta_u^\nu < 0,
  \end{equation*}
  concluding the proof.
\end{proof}
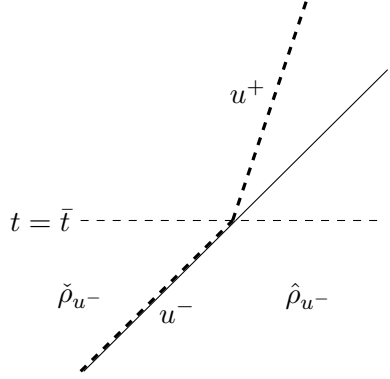
\begin{figure}[t!]
\centering
\begin{tikzpicture}[scale=2]
\draw[dashed,very thick] (0,0) -- (1,1);
\draw (1.02,1) -- (2.02,2);
\draw (0.02,0) -- (1.02,1);
\draw[dashed,very thick] (1,1) -- (1.5,2.5);
\draw[dashed] (0,1) -- (2,1);
\draw (0,1) node[left] {$t=\bar t$};
\draw (0.45,0.4) node[right] {$u^-$};
\draw (1.1,2) node[below] {$u^+$};
\draw (1.5,0.5) node {$\hat \rho_{u^-}$};
\draw (0,0.5) node {$\check \rho_{u^-}$};
\end{tikzpicture}
\caption{Jump from $u^-$ to $u^+$  ($u^+<u^-$ ) at $t=\bar t$
  with a $C_{u^-}$-wave at $t=\bar t_-$:
  $\rho_l \leq \check \rho_{u^-}$ and $\rho_r \geq \hat \rho_{u^-}$.} 
\label{NSleft3} 
\end{figure}


\begin{lemma}
  \label{Y2}
  Assume that, at time $t=\bar t$, the control jumps from $u^-$ to $u^+$
  and that we have a $UC_{u^-}$-wave at $t=\bar t_-$.
  The following two cases may happen.
  \begin{enumerate}
  \item The undercompressive shock is canceled.
    Then a rarefaction fan and a $\F_{u^+}$-wave
    arise (see Figure~\ref{NSleft2}).

  \item At time $t=\bar t_+$ we have a $UC_{u^+}$-wave (see Figure~\ref{2W}).
    Then, the number of waves increases, since a rarefaction fan and a shock
    wave are produced.
  \end{enumerate}
  In both cases we have $\Delta \Upsilon(\bar t) \leq -\delta_\rho^\nu$.
\end{lemma}

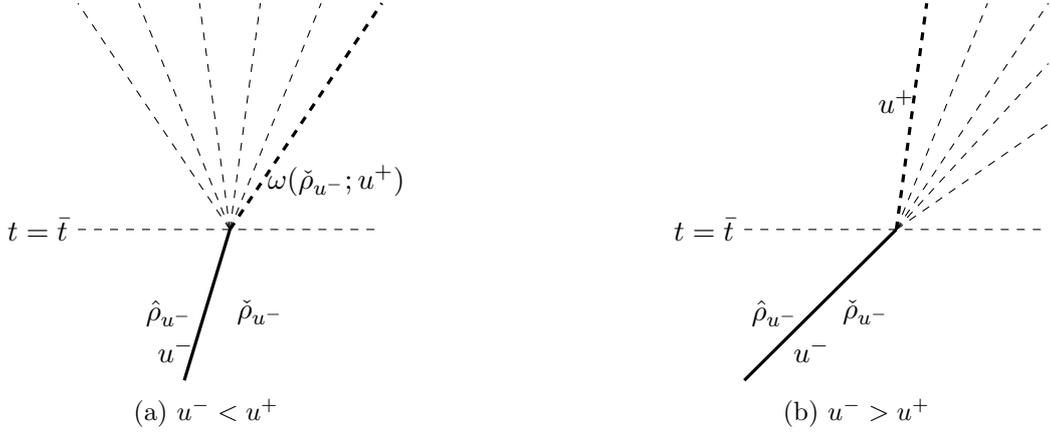
\begin{figure}[t!]
\centering
\begin{subfigure}[b]{0.45\textwidth}
\centering
      \begin{tikzpicture}[scale=2]
\draw[very thick] (0.7,0) -- (1,1);
\draw[dashed,very thick] (1,1) -- (2,2.5);
\draw[dashed] (0,1) -- (2,1);
\draw (0,1) node[left] {$t=\bar t$};
\draw (1.2,0.6) node[below] {$\check \rho_{u^-}$};
\draw (0.45,0.2) node[right,very thick] {$u^-$};
\draw (1.7,1.5) node[below,very thick] {$\omega(\check \rho_{u^-};u^+)$};
\draw[dashed] (1,1)--(0,2.5);
\draw[dashed] (1,1)--(0.4,2.5);
\draw[dashed] (1,1)--(0.8,2.5);
\draw[dashed] (1,1)--(1.2,2.5);
\draw[dashed] (1,1)--(1.6,2.5);
\draw (0.6,0.6) node[below] {$\hat \rho_{u^-}$};
\end{tikzpicture}
\caption{$u^- < u^+ $}
\label{NSleft2A}
\end{subfigure} 
\hfill
\begin{subfigure}[b]{0.45\textwidth}
\centering
     \begin{tikzpicture}[scale=2]
\draw[very thick] (0,0) -- (1,1);
\draw[dashed] (1,1) -- (2,2.5);
\draw[dashed] (0,1) -- (2,1);
\draw (0,1) node[left] {$t=\bar t$};
\draw (0.8,0.6) node[below] {$\check \rho_{u^-}$};
\draw (0.25,0.2) node[right,very thick] {$u^-$};
\draw (1,2) node[below,very thick] {$u^+$};
\draw[dashed, very thick] (1,1)--(1.2,2.5);
\draw[dashed] (1,1)--(1.6,2.5);
\draw[dashed] (1,1)--(2,2.1);
\draw[dashed] (1,1)--(2,1.7);
\draw (0.2,0.6) node[below] {$\hat \rho_{u^-}$};
\end{tikzpicture}
\caption{$u^- > u^+ $}
\label{NSleft2B}
\end{subfigure} 
\caption{Jump in $u$  at $t=\bar t$ with a $UC_{u^-}$-wave at $t=\bar t_-$
  which gets canceled at $t=\bar t_+$.}
   \label{NSleft2}
\end{figure}

\begin{proof}
  At $t=\bar t_-$, we have a $UC_{u^-}$-wave
  $(\hat \rho_{u^-},\check \rho_{u^-})$ and the speed of the AV is
  $u^-$. At $t=\bar t$, a jump from $u^-$ to $u^+$ occurs and two
  cases may happen.
  \begin{enumerate}
  \item The undercompressive shock disappears and so
    the Riemann problem at $t=\bar t$ is solved by a rarefaction fan.
    Hence we have
    \begin{align*}
      & \Delta \tv(\rho(\bar t,\cdot))=0, \\
      & \Delta \gamma(\bar t)=2 \left(\hat \rho_{u^-}-\check \rho_{u^-}\right),\\
      & \Delta \tv(u;[\bar t,+\infty[)=-\abs{u^+-u^-},
    \end{align*}
    which gives
    \begin{equation}
      \label{lk}
      \Delta \Upsilon(\bar t)=2\left( \hat \rho_{u^-}
        -\check \rho_{u^-}\right)-\frac{6}{\beta}\abs{u^+-u^-}.
    \end{equation}
    We distinguish two cases.
    \begin{enumerate}
    \item $u^- < u^+$ and $\hat{\rho}_{u^+} \le \check{\rho}_{u^-}$
      (see Figure~\ref{NSleft2A}). In this situation we have
      $f'(\check\rho_{u^-}) \leq u^+$ and
      $\check \rho_{u^+} \leq \hat \rho_{u^+} , \check \rho_{u^-} <
      \hat \rho_{u^-}$. By Taylor formula and~\ref{hyp:F}, there exists
      $\xi\in\ ]\check \rho_{u^-},\hat \rho_{u^-}[$ such that
      \begin{align}
        u^+-u^- 
        &\geq f'(\check \rho_{u^-})-\frac{f(\hat\rho_{u^-}) 
          - f(\check\rho_{u^-})}{\hat\rho_{u^-}-\check\rho_{u^-}}\nonumber\\
        &= -\frac{f''(\xi)}{2}(\hat \rho_{u^-}-\check \rho_{u^-})\label{lklk}\\
        &\geq \frac{\beta}{2}  (\hat \rho_{u^-}-\check \rho_{u^-}).\nonumber
      \end{align}
      From \eqref{lk} and \eqref{lklk} we deduce
      \begin{equation*}
        \Delta \Upsilon (\bar t)\leq - \left( \hat \rho_{u^-}-\check
          \rho_{u^-}\right) \leq -\delta_\rho^\nu.
      \end{equation*}
  
      \begin{figure}[t!]
        \centering
        \begin{subfigure}[b]{0.45\textwidth}
          \centering
          \begin{tikzpicture}[scale=2]
            \draw[very thick] (0.7,0) -- (1,1); \draw[very thick]
            (1,1) -- (1.9,2.5); \draw[dashed] (0,1) -- (2.2,1); \draw
            (0,1) node[left] {$t=\bar t$}; \draw (1.4,0.8) node[below]
            {$\check \rho_{u^-}$}; \draw (1.9,2) node[below]
            {$\check \rho_{u^+}$}; \draw (0.4,0.2) node[right,very
            thick] {$u^-$}; \draw (1.6,2.5) node[below,very thick]
            {$u^+$}; \draw[dashed] (1,1) -- (0.5,2.5); \draw[dashed]
            (1,1) -- (0.3,2.2); \draw[dashed] (1,1) -- (0.1,2); \draw
            (1,1) -- (2.2,1.7); \draw (0.4,0.8) node[below]
            {$\hat \rho_{u^-}$}; \draw (1.,2.2) node[below,]
            {$\hat \rho_{u^+}$};
          \end{tikzpicture}
          \caption{$u^- < u^+ $}
          \label{2WA}
        \end{subfigure}
        \hfill
        \begin{subfigure}[b]{0.45\textwidth}
          \centering
          \begin{tikzpicture}[scale=2]
            \draw[very thick] (0,0) -- (1,1); \draw[very thick] (1,1)
            -- (1.6,2.5); \draw[dashed] (0,1) -- (2.2,1); \draw (0,1)
            node[left] {$t=\bar t$}; \draw (1.2,0.6) node[below]
            {$\check \rho_{u^-}$}; \draw (1.6,1.9) node[below]
            {$\check \rho_{u^+}$}; \draw (0.25,0.2) node[right,very
            thick] {$u^-$}; \draw (1.4,2.5) node[below,very thick]
            {$u^+$}; \draw (1,1) -- (0.5,2.5); \draw[dashed] (1,1) --
            (2.1,1.9); \draw[dashed] (1,1) -- (2.2,1.7); \draw
            (0.1,0.6) node[below] {$\hat \rho_{u^-}$}; \draw (1.,2.2)
            node[below,] {$\hat \rho_{u^+}$};
          \end{tikzpicture}
          \caption{$u^- > u^+ $}
          \label{2WB}
        \end{subfigure}
        \caption{Jump in $u$ at $t=\bar t$ with a $UC_{u^-}$-wave at
          $t=\bar t _-$ and a $UC_{u^+}$-wave at $t=\bar t _+$.}
        \label{2W}
      \end{figure}
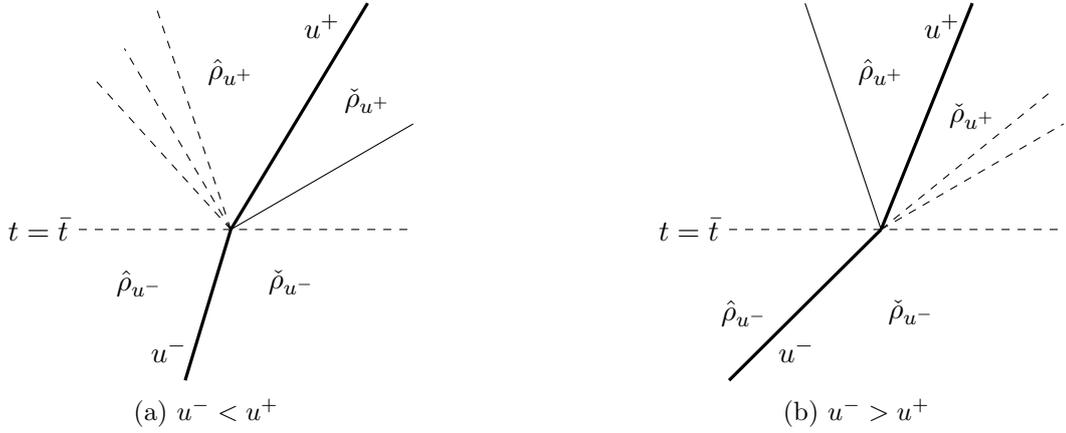
  
    \item $u^+<u^-$ and  $\hat{\rho}_{u^-} \leq \check\rho_{u^+}$
      (see Figure~\ref{NSleft2B}). In this situation we have
      $f'(\hat\rho_{u^-}) \geq u^+$  and 
      $\check \rho_{u^-} \leq \hat \rho_{u-}, \check \rho_{u^+} < \hat
      \rho_{u^+}$.
      If $u^-=V$ (and thus $\check \rho_{u^-} = \hat \rho_{u-}$), we have
      \begin{equation*}
        \Delta \Upsilon (\bar t)\leq -\frac{6}{\beta}\abs{u^+-u^-} \leq
        -\frac{6}{\beta} \delta_u^{\nu}.
      \end{equation*}
      Otherwise, by Taylor formula and~\ref{hyp:F}, there exists
      $\xi\in \ ]\check \rho_{u^-},\hat \rho_{u^-}[$ such that
      \begin{align}
        u^--u^+ 
        &\geq \frac{f(\check\rho_{u^-}) - f(\hat\rho_{u^-})}
          {\check\rho_{u^-}-\hat\rho_{u^-}}-f'(\hat \rho_{u^-})\nonumber \\
        &= -\frac{f''(\xi)}{2}(\hat \rho_{u^-}-\check \rho_{u^-}) \label{lklk2}\\
        &\geq \frac{\beta}{2}  (\hat \rho_{u^-}-\check \rho_{u^-}).\nonumber
      \end{align}
      From \eqref{lk} and \eqref{lklk2},
      \begin{equation*}
        \Delta \Upsilon (\bar t)\leq - \left( \hat \rho_{u^-}-\check
          \rho_{u^-}\right)\leq -\delta_\rho^\nu.
      \end{equation*}
    \end{enumerate}
    
  \item A new undercompressive shock arises at $t=\bar t_+$.
  Again, we distinguish two cases.
  \begin{enumerate}
  \item If $u^-<u^+$ and $\hat{\rho}_{u^+} > \check\rho_{u^-}$, then we have
    $\check \rho_{u^+} < \check \rho_{u^-}< \hat \rho_{u^+} < \hat
    \rho_{u^-}$
    (see Remark~\ref{rem2}). In this case, a $UC_{u^+}$-wave is
    created together with a rarefaction fan
    $(\hat \rho_{u^-},\hat \rho_{u^+})$ and a shock wave
    $(\check \rho_{u^+},\check \rho_{u^-})$ (see Figure~\ref{2WA}).
    Thus, we have
    \begin{align*}
      & \Delta \tv(\rho(\bar t,\cdot))=
        2 \left( \check \rho_{u^-} - \check \rho_{u^+} \right),  \\
      & \Delta \gamma(\bar t)=
        -2\left(\hat \rho_{u^+} - \check \rho_{u^+}\right)
        +2\left(\hat \rho_{u^-} - \check \rho_{u^-}\right),\\
      & \Delta \tv (u,[\bar t,+\infty[)=-\abs{u^+-u^-},
    \end{align*} 
    and we conclude that
    \begin{equation}
      \label{lplp2}
      \Delta \Upsilon (\bar t)=2
      \left(\hat \rho_{u-} - \hat \rho_{u^+}\right)
      -\frac{6}{\beta}\abs{u^+-u^-}.
    \end{equation} 
    Moreover, by Taylor's theorem and~\ref{hyp:F},
    \begin{align*}
      u^+-u^- 
      &\geq  \frac{f(\hat \rho_{u^+})-f(\check \rho_{u^-} )}
        {\hat \rho_{u^+} -\check \rho_{u^-} }
        -\frac{f(\hat \rho_{u^-})-f(\check \rho_{u^-} )}
        {\hat \rho_{u^-} -\check \rho_{u^-} } \\
      &= \left(\frac{f'(\xi)}{\xi-\check \rho_{u^-} }
        -\frac{f(\xi)-f(\check \rho_{u^-} )}{(\xi-\check \rho_{u^-} )^2}\right)
        \left( \hat \rho_{u^+}-\hat \rho_{u^-}\right)
    \end{align*}
    for some $\xi \in \ ] \hat \rho_{u^+}, \hat \rho_{u^-}[$.
    Combining Taylor's theorem with~\ref{hyp:F}, we conclude that there exist
    $\tilde{\xi} \in \ ] \xi, \hat \rho_{u^-}[$ such that
    \begin{align}
      u^+-u^- 
      &\geq - \frac{f''(\tilde{\xi})}{2}( \hat \rho_{u^-}-\hat \rho_{u^+})
      \geq  \frac{\beta}{2} ( \hat \rho_{u^-} - \hat \rho_{u^+}).
      \label{lklk2222}
    \end{align}
    From~\eqref{lplp2} and~\eqref{lklk2222},
    \begin{equation*}
      \Delta \Upsilon (\bar t)\leq - \left(\hat \rho_{u-} - \hat
        \rho_{u^+}\right) \leq -\delta_\rho^\nu.
    \end{equation*}
 
  \item If $u^+<u^-$ and {$\hat\rho_{u^-} > \check\rho_{u^+}$},
then we have
    $\check \rho_{u^-} < \check \rho_{u^+}< \hat \rho_{u^-} < \hat
    \rho_{u^+}$
    (see Remark~\ref{rem2}). In this case, a $UC_{u^+}$-wave is
    created together with a shock wave
    $(\hat \rho_{u^-},\hat \rho_{u^+})$ and a rarefaction fan
    $(\check \rho_{u^+},\check \rho_{u^-})$ (see
    Figure~\ref{2WB}). Thus, we have
    \begin{align*}
      & \Delta \tv(\rho(\bar t,\cdot))=
        2 \left(\hat \rho_{u^+} - \hat \rho_{u^-}\right)  \\
      & \Delta \gamma(\bar t)=
        -2\left(\hat \rho_{u^+} - \check \rho_{u^+}\right)
        +2\left(\hat \rho_{u^-} - \check \rho_{u^-}\right),\\
      & \Delta \tv(u;[\bar t,+\infty[)=-\abs{u^+-u^-},
    \end{align*}
    and we conclude that
    \begin{equation}
      \label{lplp}
      \Delta \Upsilon (\bar t)=2 \left(
        \check\rho_{u^+} -
        \check\rho_{u^-}\right)-\frac{6}{\beta}\abs{u^+-u^-}.
    \end{equation}
    Moreover, by Taylor's theorem and~\ref{hyp:F},
    \begin{align*}
      u^--u^+ 
      &\geq \frac{f(\check \rho_{u^-})-f(\hat \rho_{u^+} )}
        {\check \rho_{u^-} -\hat \rho_{u^+} }
        -\frac{f(\check \rho_{u^+})
        -f(\hat \rho_{u^+} )}{\check \rho_{u^+} -\hat \rho_{u^+} } \\
      &=
        \left(\frac{f'(\xi)}{\xi-\hat \rho_{u^+} }-
        \frac{f(\xi)-f(\hat \rho_{u^+} )}{(\xi-\hat \rho_{u^+} )^2}\right)
        \left( \check \rho_{u^-}-\check \rho_{u^+}\right)
    \end{align*}
    for some $\xi \in \ ] \check \rho_{u^-}, \check \rho_{u^+} [$.
    Combining again Taylor's theorem with~\ref{hyp:F},
    we conclude that there exists
    $\tilde{\xi} \in \ ] \check \rho_{u^-},\xi[$ such that
    \begin{equation} \label{lklk222} u^--u^+ \geq -
      \frac{f''(\tilde{\xi})}{2} \left( \check \rho_{u^+}-\check
        \rho_{u^-}\right) \geq \frac{\beta }{2}\left( \check
        \rho_{u^+}-\check \rho_{u^-}\right).
    \end{equation}
    From \eqref{lplp} and \eqref{lklk222} we obtain
    \begin{equation*}
      \Delta \Upsilon (\bar t) \leq - \left( \check\rho_{u^+} -
        \check\rho_{u^-}\right)\leq -\delta_\rho^\nu.
    \end{equation*}
  \end{enumerate}
\end{enumerate}
The proof is so finished.
\end{proof}
This concludes the proof of Proposition~\ref{prop:upsilon}.
\end{proof}

Proposition~\ref{prop:upsilon} ensures that the number of wave fronts in $\rho_\nu$
is finite and the wave-front tracking procedure can be prolonged for every positive time. 
Moreover, the total variation of $\rho_\nu$ is uniformly bounded in time:
\begin{corollary} \label{cor:TVbound}
  Let $\left\{\rho_\nu,y_\nu,u_\nu\right\}_{\nu\in\N}$ be the sequence
  of approximate solutions constructed in Section~\ref{sse:wft}.
  Then for any $\nu\in\N$, $t>0$, it holds
  \begin{equation*} 
    \tv\left(\rho_\nu(t,\cdot)\right) \leq \Upsilon(t)\leq
    \Upsilon(0) \leq \tv(\rho_0)+2R+\tv(u).
  \end{equation*}
\end{corollary}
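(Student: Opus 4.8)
The plan is to derive all three inequalities directly from Proposition~\ref{prop:upsilon}, so the argument is short. First I would check that $t\mapsto\Upsilon(t)$ is non-increasing along the approximate solution $(\rho_\nu,y_\nu,u_\nu)$. Between two consecutive events --- a classical wave interaction, an interaction of a classical wave with $y_\nu$, or a jump of $u_\nu$ --- the profile $\rho_\nu(t,\cdot)$ is a rigid translate of a fixed piecewise constant function, the AV stays between the same two density values so $\gamma(t)$ does not change, and $u_\nu$ is constant so $\tv(u_\nu;[t,+\infty[)$ does not change; hence $\Upsilon$ is constant on each such interval. At each event, Proposition~\ref{prop:upsilon} gives $\Upsilon(\bar t_+)\le\Upsilon(\bar t_-)$. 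To turn these per-event inequalities into $\Upsilon(t)\le\Upsilon(0)$ for every $t>0$ one only needs that finitely many events occur on each bounded time interval; this is precisely what is recorded just above the statement, and it follows from the quantitative part of Proposition~\ref{prop:upsilon}: each event producing new fronts drops $\Upsilon$ by at least $\min\{\delta_\rho^\nu,6\delta_u^\nu/\beta\}>0$ while $\Upsilon\ge0$, so there are only finitely many of those, and the remaining events do not increase the number of fronts, so the total number of fronts stays uniformly bounded and interactions cannot accumulate.

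Next I would establish $\tv(\rho_\nu(t,\cdot))\le\Upsilon(t)$, which is the elementary observation already made after \eqref{eq:glimm}: from the definition, $\Upsilon(t)-\tv(\rho_\nu(t,\cdot))=2R+\gamma(t)+\tfrac{6}{\beta}\tv(u_\nu;[t,+\infty[)$; the last summand is nonnegative, and $\gamma(t)\ge-2(\hat\rho_{u_\nu(t)}-\check\rho_{u_\nu(t)})\ge-2R$ since $0\le\check\rho_u\le\hat\rho_u\le R$, so $2R+\gamma(t)\ge0$.

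Finally I would bound $\Upsilon(0)$: using $\gamma(0)\le0$ together with the properties \eqref{eq:rho0approx} and \eqref{eq:uapprox} of the discretized data ($\tv(\rho_{0,\nu})\le\tv(\rho_0)$ and $\tv(u_\nu;[0,+\infty[)=\tv(u_\nu)\le\tv(u)$), the definition \eqref{eq:glimm} yields the estimate already displayed below \eqref{eq:glimm}, namely the right-hand side of the corollary. Chaining the three bounds finishes the proof. I do not expect any genuine obstacle here --- all the substance is in Proposition~\ref{prop:upsilon} --- the only point deserving a sentence of care being the passage from the per-event inequality to global monotonicity, which rests on the finiteness of the event set, itself an immediate consequence of the strictly quantitative decrease in Proposition~\ref{prop:upsilon}.
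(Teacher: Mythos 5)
Your proposal is correct and follows essentially the same route as the paper, which states the corollary without a separate proof as an immediate consequence of Proposition~\ref{prop:upsilon} together with the observations made right after the definition~\eqref{eq:glimm} (namely $\Upsilon(t)\geq \tv(\rho_\nu(t,\cdot))\geq 0$ and the bound on $\Upsilon(0)$ from \eqref{eq:rho0approx}--\eqref{eq:uapprox}). Two cosmetic points: between events the profile is not a rigid translate (fronts travel at different speeds) but merely a piecewise constant function with unchanged jump sizes, which is all you need for $\tv$ to be constant; and the definition of $\Upsilon$ actually yields $\Upsilon(0)\leq \tv(\rho_0)+2R+\tfrac{6}{\beta}\tv(u)$, exactly as displayed after \eqref{eq:glimm}, so your final bound should carry the factor $6/\beta$ rather than silently matching the corollary's right-hand side as literally printed.
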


\subsection{Existence of a solution}
\label{sse:existence-solution}
The following is the main result of the paper.
\begin{theorem} \label{final21}
  Let the initial conditions $\rho_0 \in \LL1 
  \left(\R; [0,R]\right)$ with finite total variation,
  $y_0 \in \R$, and the open-loop control
  $u \in \LL1\left(\R^+; [0, V] \right)$ with finite total variation.
  There exists a solution $\left(\rho, y\right)$
  to~\eqref{eq:control-model} in the sense of Definition~\ref{def:sol-CP}.
\end{theorem}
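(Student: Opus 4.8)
The plan is to pass to the limit in the sequence of wave-front tracking approximations $(\rho_\nu, y_\nu, u_\nu)$ constructed in Section~\ref{sse:wft}, using the uniform bounds provided by Corollary~\ref{cor:TVbound}. First I would establish the compactness needed to extract a convergent subsequence: by Corollary~\ref{cor:TVbound} the spatial total variation $\tv(\rho_\nu(t,\cdot))$ is uniformly bounded in $t$ and $\nu$, and a standard Lipschitz-in-time estimate in $\LL1$ (coming from the finite propagation speed $\|f'\|_\infty \leq $ const and the bounded number of waves) gives equicontinuity in time; Helly's theorem then yields a subsequence with $\rho_\nu \to \rho$ in $\C0(\R^+; \Ll1(\R))$ with $\tv(\rho(t,\cdot)) < +\infty$ for all $t$, establishing point~\ref{def:sol-CP:1}. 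For the AV trajectory, since $|\dot y_\nu| \leq V$ uniformly, the $y_\nu$ are uniformly Lipschitz, so by Ascoli-Arzel\`a a subsequence converges uniformly on compact sets to some $y \in \mathbf{W^{1,1}_{loc}}(\R^+;\R)$, giving point~\ref{def:sol-CP:2}; by~\eqref{eq:uapprox}, $u_\nu \to u$ in $\LL1(\R^+)$.

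Next I would verify the entropy-type conditions in the limit. Point~\ref{def:sol-CP:3}: each approximate solution satisfies a discretized version of the entropy inequality~\eqref{eq:EC} with $f$ replaced by $f_\nu$ and a penalization term at $x = y_\nu(t)$; since $f_\nu \to f$ uniformly on $[0,R]$, $\rho_\nu \to \rho$ in $\LL1_{loc}$, $y_\nu \to y$ uniformly, and $\dot y_\nu = u_\nu$ or $\dot y_\nu = v(\rho_\nu(t,y_\nu(t)_+))$ converges appropriately, one passes to the limit in the weak formulation. The delicate point is the boundary term $2\int_{\R^+}(f(\kappa) - \dot y\kappa - \min\{\cdots, F_\alpha(\dot y)\})\varphi(t,y(t))\,dt$: here one needs $\dot y_\nu \to \dot y$ in a strong enough sense and continuity of $F_\alpha$ (which follows from the construction and the estimates in Lemma~\ref{lem:monotonicity}); this is where the technical lemmas in Appendix~\ref{sec:app:tech-lemmas} come in. Points~\ref{def:sol-CP:5} and~\ref{def:sol-CP:4}: these hold along the approximate undercompressive shocks by construction (point~\ref{point:NP} and point~\ref{point:7} of Definition~\ref{def:epswf}), and one must show they survive passing to the limit, which amounts to proving that the traces $\rho_\nu(t, y_\nu(t)_\pm)$ converge to $\rho(t, y(t)_\pm)$ for a.e.\ $t$ — this requires a careful argument since traces do not pass to the limit under mere $\LL1$ convergence.

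The main obstacle I anticipate is precisely this convergence of the one-sided traces $\rho_\nu(t, y_\nu(t)_\pm)$ along the moving curve, and with it the proof that $\dot y(t) = \min\{u(t), v(\rho(t,y(t)_+))\}$ for a.e.\ $t$ and that the flux constraint~\eqref{eq:constraint} holds in the limit. The standard approach is to use the bounded variation of $\rho_\nu$ together with the structure of the wave-front tracking solution: away from the AV the only interactions are between classical waves, so $\rho_\nu(t,\cdot)$ restricted to a neighborhood of $y_\nu(t)$ has controlled oscillation, and one can extract convergence of traces along a further subsequence for a.e.\ $t$, using for instance the ideas of~\cite{MR3657113, LiardPiccoli2, LiardPiccoli}. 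One then distinguishes whether the limiting wave at $x = y(t)$ is an undercompressive shock (in which case $\rho(t,y(t)_-) = \hat\rho_{u(t)}$, $\rho(t,y(t)_+) = \check\rho_{u(t)}$, and both~\ref{def:sol-CP:5} and~\ref{def:sol-CP:4} are immediate from the definitions~\eqref{eq:check-hat}, \eqref{eq:rho_u}) or a classical configuration (in which case the constraint holds with strict inequality or the trace satisfies $v(\rho(t,y(t)_+)) \geq u(t)$, so $\dot y(t) = u(t)$). Assembling these cases, together with the entropy inequality already established, yields that $(\rho, y)$ is a solution in the sense of Definition~\ref{def:sol-CP}, completing the proof.
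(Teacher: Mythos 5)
Your overall strategy (compactness of the wave-front tracking approximations, then verification of each condition of Definition~\ref{def:sol-CP} in the limit) is the same as the paper's, and your treatment of points~\ref{def:sol-CP:1} and~\ref{def:sol-CP:2} is fine. But there is a genuine gap at the heart of the argument: you never produce a mechanism for the strong convergence of the AV speeds $\dot y_\nu$. You correctly note that the boundary term in~\eqref{eq:EC} and the constraint~\eqref{eq:constraint} involve the nonlinear quantities $F_\alpha(\dot y_\nu(t))$ and $\min\{f(\kappa)-\dot y_\nu(t)\kappa,\,F_\alpha(\dot y_\nu(t))\}$, so that ``$\dot y_\nu \to \dot y$ in a strong enough sense'' is needed; however the only bound you invoke is $|\dot y_\nu|\le V$, which gives uniform convergence of $y_\nu$ and hence only weak-$*$ convergence of the derivatives --- not enough to pass to the limit inside $F_\alpha$ or inside the $\min$. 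The paper's Lemma~\ref{convergence-gen} closes exactly this gap by proving a \emph{uniform BV bound} on $\dot y_\nu$ on $[0,T]$: the positive variation of $\dot y_\nu$ can only be produced by rarefaction fronts hitting the AV from the right or by upward jumps of $u_\nu$, and a careful accounting (estimates~\eqref{eq:final-0}, \eqref{final1}, \eqref{final2} and the counting argument~\eqref{final3} for the case in which the AV re-enters the same rarefaction fan $n_\nu$ times, where the repeated rarefaction contributions are paid for by intervening downward jumps of $u_\nu$) yields $\tv(\dot y_\nu;[0,T])\le 2C\,\tv(\rho_0)+(2C+6)\,\tv(u)+V$, whence $\dot y_\nu\to\dot y$ in $\Ll1$ up to a subsequence. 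This estimate relies on the interaction inequalities \eqref{lklk}, \eqref{lklk2}, \eqref{lklk2222}, \eqref{lklk222} from Proposition~\ref{prop:upsilon}, not on the appendix lemmas, which you misattribute here: Lemmas~\ref{suprhostar}--\ref{ooooa} are used only later, for point~\ref{def:sol-CP:4}. Without the BV bound on $\dot y_\nu$ your limit passage in points~\ref{def:sol-CP:3}, \ref{def:sol-CP:5} and~\ref{def:sol-CP:4} does not go through.

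A secondary, less serious divergence: for point~\ref{def:sol-CP:5} you propose to prove convergence of the one-sided traces $\rho_\nu(t,y_\nu(t)_\pm)$ along the moving curve, which you yourself flag as delicate. The paper deliberately avoids this for point~\ref{def:sol-CP:5}: it applies the Chen--Frid divergence theorem on the domains $D_l^\nu$, $D_l$ to rewrite the flux traces as integral identities (\eqref{test}, \eqref{test4}), passes to the limit in those identities using the $\Ll1$ convergences of $\rho_\nu$ and $\dot y_\nu$, and concludes by arbitrariness of the test function. Trace-type information near $y(\bar t)$ is extracted only where it is unavoidable, namely for point~\ref{def:sol-CP:4}, via the appendix lemmas and a case analysis on the position of $\rho_\pm$ relative to $\rho^*_{\bar u}$ (including the impossible configuration $\rho_+<\rho^*_{\bar u}<\rho_-$, Lemma~\ref{ine}, and the tricky subcase $y_\nu(\bar t)<y(\bar t)$ handled through Lemmas~\ref{oooo} and~\ref{ooooa} and the differentiability of $y$ at $\bar t$). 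Your sketch of the dichotomy ``undercompressive shock versus classical configuration'' points in the right direction but glosses over precisely these cases, which are where the actual work lies.
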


First, we prove that a limit of the sequence of approximate solutions
$\left\{\rho_\nu,y_\nu,u_\nu\right\}_{\nu\in\N}$ to~\eqref{eq:control-model}
constructed in Section~\ref{sse:wft} exists. 
\begin{lemma} \label{convergence-gen}
  Let $\left\{\rho_\nu,y_\nu,u_\nu\right\}_{\nu\in\N}$ be the sequence of
  approximate solutions 
  to~\eqref{eq:control-model} constructed in Section~\ref{sse:wft}.
  Then, up to a subsequence, we have 
  \begin{subequations}
    \label{convergence}
    \begin{align}
      & \rho_\nu  \to \rho,
      & \textrm{in } \, \Ll1 (\R^+ \times \R; [0,R]),
        \label{convergence-rho}\\[5pt]
      & y_\nu \to  y,
      & \textrm{in } \, \Ll{\infty}(\R^+; \R),
        \label{convergence-y}\\[5pt]
      & \dot y_\nu \to \dot y,
      & \textrm{in } \, \Ll{1}(\R^+; \R), \label{convergence-doty}
    \end{align}
  \end{subequations}
  for some $\rho \in \C0\left(\R^+;
    \LL1 (\R; [0,R])\right)$ with $\tv\left(\rho(t)\right) < + \infty$
  for a.e. $t \in \R^+$
  and $y \in \Wl{1,1}(\R^+;\R)$, which is a Lipschitz continuous function
  with Lipschitz constant $V$. 
\end{lemma}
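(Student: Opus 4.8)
The plan is to derive the three convergences from three independent compactness arguments --- a standard wave-front tracking estimate for $\rho_\nu$, the Ascoli--Arzel\`a theorem for $y_\nu$, and a total-variation-in-time bound for $\dot y_\nu$ --- and then extract a common subsequence by diagonalization. For $\rho_\nu$ I would start from the uniform spatial bound $\tv(\rho_\nu(t,\cdot))\le\Upsilon(0)$ of Corollary~\ref{cor:TVbound}. Since $\rho_\nu(t,\cdot)$ is piecewise constant and all its fronts --- classical ones travelling at their Rankine--Hugoniot speed, undercompressive shocks at speed $u_\nu\le V$ --- move with speed at most $\Lambda:=\max\{\|f'\|_{\LL\infty([0,R])},V\}$, one obtains the uniform-in-$\nu$ estimate $\|\rho_\nu(t,\cdot)-\rho_\nu(s,\cdot)\|_{\LL1(\R)}\le\Lambda\,\Upsilon(0)\,|t-s|$. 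Together with $0\le\rho_\nu\le R$, the classical wave-front tracking compactness theorem (see~\cite{Bressan_book_hyp_2000, MR1912206}) then gives, along a subsequence, $\rho_\nu\to\rho$ in $\Ll1(\R^+\times\R)$ and a.e.; passing to the limit in the $\LL1$-Lipschitz bound via Fatou yields $\rho\in\C0(\R^+;\LL1(\R;[0,R]))$ with $\rho(0,\cdot)=\rho_0$, and lower semicontinuity of $\tv$ gives $\tv(\rho(t,\cdot))\le\Upsilon(0)<+\infty$ for a.e.\ $t$.

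For $y_\nu$ I would simply observe that point~\ref{point:7} of Definition~\ref{def:epswf} forces $\dot y_\nu(t)\in[0,V]$ for a.e.\ $t$, so the $y_\nu$ are equi-Lipschitz with constant $V$ and all satisfy $y_\nu(0)=y_0$; Ascoli--Arzel\`a then provides, along a further subsequence, $y_\nu\to y$ uniformly on compact subsets of $\R^+$, with $y$ Lipschitz of constant $V$, hence $y\in\Wl{1,1}(\R^+;\R)$, and $\dot y_\nu\wc\dot y$ weakly-$\ast$ in $\LL\infty$.

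The genuinely delicate step is to upgrade this to strong $\Ll1$ convergence of $\dot y_\nu$, and my plan is to prove that $\tv(\dot y_\nu;[0,T])$ is bounded uniformly in $\nu$ for every $T>0$. Writing $\dot y_\nu=\min\{u_\nu,\,v(\rho_\nu(\cdot,y_\nu(\cdot)_+))\}$ and using $\tv(\min\{g,h\})\le\tv(g)+\tv(h)$, it suffices to bound $\tv(u_\nu;[0,T])\le\tv(u)$ (by~\eqref{eq:uapprox}) together with $\tv(\rho_\nu(\cdot,y_\nu(\cdot)_+);[0,T])$. The right trace changes only when a classical $\rho$-wave meets $y_\nu$ or when $u_\nu$ jumps. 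At a jump $u^-\to u^+$, the case analysis behind Proposition~\ref{prop:upsilon} (Lemmas~\ref{Y1}, \ref{Y5}, \ref{Y2}) shows the new trace is one of $\check\rho_{u^+}$, $\hat\rho_{u^+}$ or the unchanged free-flow state, and the Lipschitz bounds $|\check\rho'|\le1/\beta$, $|\hat\rho'|\le1/B$ of Lemma~\ref{lem:monotonicity} --- together with the fact, coming from Remark~\ref{rem2} and Lemma~\ref{le:grid_sequence}, that the bands $\left]\check\rho_u,\hat\rho_u\right[$ can become disjoint only when $|u^+-u^-|$ exceeds a threshold depending on $\alpha$ and $f$ alone --- bound each such trace jump by $C|u^+-u^-|$, which sums to $\le C\tv(u)$. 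The contribution of classical waves crossing $y_\nu$ should likewise be controlled by $\tv(\rho_0)$ plus the $O(\tv(u))$ worth of classical waves created at control jumps, using Proposition~\ref{prop:upsilon} and~\cite[Lemma~2]{MR3264413}. Granting this, Helly's theorem gives a.e.\ convergence of $\dot y_\nu$ along a subsequence, and $0\le\dot y_\nu\le V$ plus dominated convergence (and a diagonal argument over $T\in\N$) upgrade it to $\Ll1(\R^+;\R)$; the limit then necessarily coincides with the weak limit $\dot y$ of the previous paragraph.

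The main obstacle is precisely this last step: making the bound on $\tv(\dot y_\nu;[0,T])$ uniform in the grid parameter $\nu$. Both the number of discontinuities of $u_\nu$ and the number of rarefaction fronts created at control jumps blow up as $\nu\to\infty$, so one cannot afford a fixed positive per-interaction drop of $\Upsilon$; instead every contribution to the trace variation must be charged either to $\tv(\rho_0)$ or, via the Lipschitz estimates of Lemma~\ref{lem:monotonicity}, to $\tv(u)$, which forces one to push the case-by-case interaction bookkeeping of Lemmas~\ref{Y1}--\ref{Y2} and of~\cite{MR3264413} all the way through. (An alternative would be to prove directly that the traces $\rho_\nu(t,y_\nu(t)_+)$ converge for a.e.\ $t$ to a one-sided trace of $\rho$ along $y$ and then pass to the limit in point~\ref{point:7} of Definition~\ref{def:epswf}, but this appears no easier.)
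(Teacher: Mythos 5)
Your treatment of \eqref{convergence-rho} and \eqref{convergence-y} is essentially the paper's: the uniform bound of Corollary~\ref{cor:TVbound} plus finite propagation speed gives Lipschitz continuity in time in $\LL1$ and Helly's theorem yields \eqref{convergence-rho}, while $0\le\dot y_\nu\le V$ plus Ascoli--Arzel\`a yields \eqref{convergence-y} with $y$ Lipschitz of constant $V$. Your plan for \eqref{convergence-doty} --- a uniform-in-$\nu$ bound on $\tv(\dot y_\nu;[0,T])$ followed by Helly --- is also the paper's strategy, but the proposal has a genuine gap exactly at the point you yourself label ``the main obstacle'': the bound is never established. The reduction $\tv(\dot y_\nu)\le\tv(u_\nu)+\tv\bigl(v(\rho_\nu(\cdot,y_\nu(\cdot)_+))\bigr)$ leaves untouched the hard term, namely the time-variation of the right trace along $y_\nu$, and the assertion that the contribution of classical waves crossing $y_\nu$ ``should likewise be controlled by $\tv(\rho_0)$ plus $O(\tv(u))$'' is precisely what is not obvious: a single rarefaction fan can cross the AV trajectory many times (the AV leaves it while the control is large, then the fan overtakes it again after a downward control jump), so the wave strength met by $y_\nu$ is not bounded by the strength of the waves present, and per-interaction decay of $\Upsilon$ is of no use here since $\delta_\rho^\nu,\delta_u^\nu\to0$ as $\nu\to\infty$. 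Two side remarks: the ``threshold'' you extract from Remark~\ref{rem2} and Lemma~\ref{le:grid_sequence} is not uniform (the gaps $u^\nu_j-u^\nu_{j-1}$ shrink to zero as $u\to V$), and \eqref{eq:deriv-hat} gives $\hat\rho'(u)\le-1/B$, i.e.\ a bound in the direction opposite to the Lipschitz estimate $\abs{\hat\rho'}\le1/B$ you invoke; Lemma~\ref{lem:monotonicity} only asserts Lipschitz continuity of $\check\rho$ and $\rho^*$.

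The paper closes the gap with a counting argument you do not supply. It bounds only the positive variation, using $\tv(\dot y_\nu;[0,T])\le 2\,\tvp(\dot y_\nu;[0,T])+\norm{\dot y_\nu}_\infty$ and the fact that $\dot y_\nu$ can increase only when rarefaction fronts hit the AV from the right or when $u_\nu$ jumps. If a fan is met at most once, the increase is bounded by $C\abs{\rho_0({x_0}_-)-\rho_0({x_0}_+)}$ when the fan originates at $t=0$, and by $C\abs{u_\nu(\bar t_+)-u_\nu(\bar t_-)}$ when it is created at a control jump, the conversion from density gaps to control jumps being exactly the Taylor estimates \eqref{lklk}, \eqref{lklk2}, \eqref{lklk2222}, \eqref{lklk222} already proved inside Lemma~\ref{Y2}; this gives \eqref{eq:final-0}, \eqref{final1}, \eqref{final2}. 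For a fan $(\rho_l,\rho_r)$ crossed $n_\nu$ times, the key observation is that between two consecutive passages the control must first satisfy $u_\nu\ge v(\rho_r)$ (to let the AV leave the fan) and then $u_\nu\le v(\rho_l)$ (so that the fan can overtake it again), whence each additional passage costs downward control variation at least $v(\rho_r)-v(\rho_l)$; this yields $(n_\nu-1)\left[v(\rho_r)-v(\rho_l)\right]\le\sum_i\abs{u_\nu({t^{u_\nu}_{2i-1}}_-)-u_\nu({t^{u_\nu}_{2i-1}}_+)}$ and hence \eqref{final3}, charging the whole repeated-crossing contribution to one fan strength plus $2\tv(u)$. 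Summing over the finitely many control jumps then gives $\tv(\dot y_\nu;[0,T])\le 2C\tv(\rho_0)+(2C+6)\tv(u)+V$, uniformly in $\nu$. Without this step (or an equivalent control of the trace variation through repeated crossings), your argument does not produce a $\nu$-independent bound, and the strong $\Ll1$ convergence \eqref{convergence-doty} does not follow.
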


\begin{proof}
  Fix $T > 0$. 
  By Corollary~\ref{cor:TVbound}, we know that $\tv(\rho_\nu(t,\cdot))$
  is uniformly bounded for a.e. $t \in [0, T]$.  
  This, together with the finite wave speed propagation, implies that
  \begin{equation*}
    \int_\R \abs{\rho_\nu(t,x) - \rho_\nu(s,x) } dx
    \leq L \abs{t-s} \qquad \hbox{for all}~t,s \in [0, T],
  \end{equation*}
  for some $L$ depending on $f$ and on the total variation bound,
  but not on $\nu$.
  Helly's Theorem,
  see~\cite[Theorem~2.4]{Bressan_book_hyp_2000},
  implies the existence of
  $\rho \in \C{0}\left([0, T]; \LL1 
    \left(\R; [0,R]\right)\right)$
  such that $\tv\left(\rho(t)\right) < + \infty$ for a.e. $t \in [0, T]$
  and a subsequence of $\{ \rho_\nu\}_\nu$, which for simplicity we denote
  again by $\{\rho_\nu\}_\nu$. This implies that~(\ref{convergence-rho})
  holds.

  By construction (see point~\ref{point:7}
  of Definition~\ref{def:epswf}), we deduce that
  \begin{equation}
    \label{eq:bound_dot_y}
    0 \le \dot{y}_\nu(t) \le V
  \end{equation}
  for a.e. $t \in [0, T]$ and $\nu \in \N \setminus\left\{0\right\}$.
  Hence Ascoli Theorem~\cite[Theorem~7.25]{MR0385023}
  implies that, there exists a function $y \in \C{0}\left([0,T]; \R\right)$
  and a subsequence of $\{y_\nu\}_\nu$, which for simplicity we denote
  again by $\{y_\nu\}_\nu$, such that $y_\nu$ converges to
  $y$ uniformly in $\C{0}\left([0,T]; \R\right)$.
  By the arbitrariness of $T$, the function $y$ can
  be defined on $\R^+$ and~(\ref{convergence-y}) holds. Moreover
  $y$ is a Lipschitz continuous function with $V$ as a Lipschitz constant.

  To prove~(\ref{convergence-doty}), we aim to estimate the total variation
  of $\dot y_\nu$ on $[0, T]$.
  Since~(\ref{eq:bound_dot_y}) holds, then it is sufficient
  to estimate the positive variation of $\dot y_\nu$, denoted with the
  symbol $\tvp$.
  Observe that $\dot y_\nu$ can jump only at interactions
  with waves coming from the right 
  (see the interactions' estimates in~\cite[Section 4.2]{MR3264413})
  or at jumps in the control $u_\nu$.
  In particular, $\dot y_\nu$ is non-decreasing at interactions
  with rarefaction fronts, which can be originated 
  at $t=0$, or at upward jumps in $u_\nu$ (see Figure~\ref{NSleft2A}).
  We have the following two possibilities.
  
  \begin{enumerate}[label=\Roman*.]
  \item \textbf{A fan of rarefaction fronts interacts with the AV
      at most once.}
    If $y_\nu$ interacts over the time interval $[t_1, t_2]$
    with rarefaction fronts, all originated from the point $(0,x_0)$,
    and $u_\nu$ is constant in $[t_1, t_2]$, then there exists
    a constant $C>0$ such that
    \begin{equation}
      \label{eq:final-0}
      \tvp (\dot y_\nu; [t_1,t_2]) \le
      \abs{v(\rho_\nu(t_1,y_\nu(t_1)_-)) - v(\rho_\nu(t_2,y_\nu(t_2)_+))}
      \le C \abs{\rho_0({x_0}_-) - \rho_0({x_0}_+)}.
    \end{equation}

    If $y_\nu$ interacts over $[t_1,t_2]$ with rarefaction fronts,
    all originated from $(\bar t, y_\nu(\bar t))$ with $\bar t>0$, and $u_\nu$
    is constant over $ [t_1,t_2]$ (such rarefaction wave created at $\bar t>0$
    is described in Lemma~\ref{Y2}; see also Figure~\ref{NSleft2}
    and Figure~\ref{2W}), then either a $\mathcal{F}_{u_{\nu}(\bar t_+)}$-wave
    arises
    (see Figure \ref{NSleft2}) or a $UC_{u_\nu(\bar t_+)}$-wave arises
    (see Figure \ref{2W}).
    In the former case, the rarefaction wave connects
    $\hat \rho_{u_\nu(\bar t_-)}$ to
    $\check \rho_{u_\nu(\bar t_-)}$ and so, using~\eqref{lklk} and~\eqref{lklk2},
    there exists a positive constant $C > 0$ such that
    \begin{equation}
      \label{final1}
      \tvp(\dot y_\nu; [t_1,t_2]) \le
      \abs{v\left(\check \rho_{u_\nu(\bar t_-)}\right)
        - v\left(\hat \rho_{u_\nu(\bar t_-)}\right)}
      \leq C \abs{u_\nu(\bar t_+)-u_\nu(\bar t_-)}.
    \end{equation}
    
    In the latter case, when a $UC_{u_\nu(\bar t_+)}$-wave arises
    (see Figure \ref{2W}),
    the rarefaction wave connects either $\hat \rho_{u_\nu(\bar t_-)}$ to
    $\hat \rho_{u_\nu(\bar t_+)}$ or $\check \rho_{u_\nu(\bar t_-)}$ to
    $\check \rho_{u_\nu(\bar t_+)}$.
    Using~\eqref{lklk2222} and~\eqref{lklk222}, we deduce that
    there exists a constant $C>0$ such that
    \begin{equation} \label{final2}
      \tvp (\dot y_\nu; [t_1,t_2]) \leq
      \abs{v(\rho_\nu(t_1,y_\nu(t_1))) - v(\rho_\nu(t_2,y_\nu(t_2)))}
      \leq C \abs{u_\nu(\bar t_+) - u_\nu(\bar t_-)}.
    \end{equation}
  \item \textbf{The AV interacts with a fan of rarefaction shocks
      $(\rho_l,\rho_r)$ modifying $n_\nu$ times its speed.}
    \begin{figure}[!ht]
      \centering
      \includegraphics[width=0.45\linewidth]{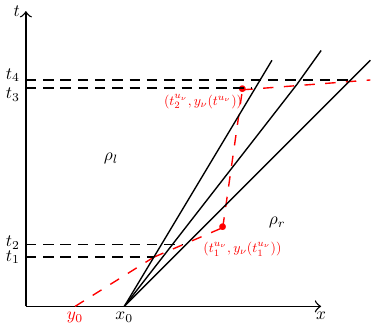}
      \caption{The AV interacts with a fan of rarefaction shocks
        $(\rho_l, \rho_r)$ modifying its speed over the time intervals
        $[t_1,t_2]$ and $[t_3,t_4]$. In this situation $n_\nu = 2$ and,
        for every $i \in \{1,2\}$, $u_\nu({t_i^{n_\nu}}_-)\neq
        u_\nu({t_i^{n_\nu}}_+)$.
      }\label{ex1}
    \end{figure}
    Define, for every $i \in \{ 1,\cdots,n_\nu\}$, the times $t_{2i-1}$ and
    $t_{2i}$, respectively the times at which the AV enters from the left
    and exits from the right the rarefaction fan.
    Moreover, for every $i \in \left\{1, \cdots, n_\nu\right\}$,
    define by $t^{u_\nu}_{2i-1}$ and $t^{u_\nu}_{2i}$
    the point of discontinuity for $u_\nu$ in such a way
    \begin{equation*}
      t_0 < t_1 < t_2 < t_1^{u_\nu} < t_2^{u_\nu} < \cdots
      < t_{2n_\nu - 3} < t_{2n_\nu - 2}< t^{u_\nu}_{2n_\nu -3}
      < t^{u_\nu}_{2n_\nu - 2} < t_{2 n_\nu - 1} < t_{2 n_\nu},
    \end{equation*}
    where $t_0$ is the time at which the rarefaction fan is originated.
    For simplicity we denote with $\rho_l$ and $\rho_r$ respectively the
    left and the right states of the rarefaction fan;
    see Figure~\ref{ex1}. Note that
    $v\left(\rho_l\right) < v\left(\rho_r\right)$.

    Since, $\dot y_\nu$ can increase only at interactions with waves
    coming from the right or at jumps in $u_\nu$, we have 
    \begin{align*}
      \tvp(\dot y_\nu;[t_1,t_{2n_\nu}])
      & \le \sum_{i=1}^{n_\nu} \abs{v\left(\rho_l\right) - v\left(\rho_r\right)}
        + \sum_{i=1}^{2 n_\nu-2} \vert
        u_\nu({t_i^{u_\nu}}_-) - u_\nu({t_i^{u_\nu}}_+)\vert
      \\
      & =  n_\nu \left[v(\rho_r) - v(\rho_l)\right]
        + \sum_{i=1}^{2 n_\nu-2} \vert
        u_\nu({t_i^{u_\nu}}_-) - u_\nu({t_i^{u_\nu}}_+)\vert.
    \end{align*}
    The speed of AV is modified in the time interval
    $\left[t_{2i-1}, t_{2i}\right]$ for every $i \in \{1, \cdots, n_\nu\}$;
    in particular, we deduce that
    $v(\rho_l) \leq v(\rho_r) \leq u_\nu({t_{2i-1}^{u_\nu}}_-)$
    for every $i \in \{1, \cdots, n_\nu-1\}$.
    Moreover, since the speed of AV is constant over
    the time interval $\left]t_{2i-1}^{u_\nu},t^{u_\nu}_{2i}\right[$
    for every $i \in \left\{1, \cdots, n_\nu - 1\right\}$, we have
    $u_\nu({t_{2i-1}^{u_{\nu}}}_+) \le v(\rho_l) \le v(\rho_r)$. Therefore
    \begin{equation*}
      (n_\nu - 1) \left[v(\rho_r) - v(\rho_l)\right]
      \le \sum_{i=1}^{n_\nu-1}
      \abs{u_\nu({t_{2i-1}^{u_\nu}}_-) - u_\nu({t_{2i-1}^{u_\nu}}_+)},
    \end{equation*}
    and so
    \begin{equation}
      \label{final3}
      \tvp (\dot y_\nu;[t_1,t_{2n_\nu}]) \leq \left[v(\rho_r) - v(\rho_l)\right]
      +2\sum_{i=1}^{u_\nu - 1}
      \abs{u_\nu({t_{2i-1}^{u_\nu}}_-) - u_\nu({t^{n_\nu}_{2i-1}}_+)}.
    \end{equation}
  \end{enumerate}
 We have  
\begin{align}
  \tv \left(\dot y_\nu; [0,T]\right)
  & \leq 2 \tvp
    \left( \dot y_\nu; [0,T]\right) + \norm{\dot y_\nu}_{\infty},
    \nonumber\\
  & \leq 2\sum_{i=0}^{N_\nu}\tvp(\dot y_\nu;]t_i^{u_\nu},t_{i+1}^{u_\nu}[)
    + 2 \sum_{i=1}^{N_\nu}
    \vert u_\nu({t_i^{u_\nu}}_+)-u_\nu({t_i^{u_\nu}}_-)\vert+
    \norm{\dot y_\nu}_{\infty}.\label{ffoh1}
 \end{align}
Above,   $(t_i^{u_\nu})_{i=1,\cdots,N_\nu}$  are the $N_\nu$ discontinuous points of $u_\nu$ such that, for every $i=1,\cdots,N_\nu$, $t_i^{u_\nu}< T$. Moreover,  by convention $t_{0}^{u_\nu}=0$ $t_{N_\nu+1}^{u_\nu}=T$. Combining~(\ref{eq:final-0}), \eqref{final1}, \eqref{final2},
  and~\eqref{final3}, we deduce 
  \begin{equation}\label{ffoh}\sum_{i=0}^{N_\nu}\tvp(\dot y_\nu;]t_i^{u_\nu},t_{i+1}^{u_\nu}[)\leq C(TV(\rho_0)+TV(u))+2TV(u).\end{equation}
 From \eqref{ffoh1} and \eqref{ffoh}, there exists a constant 
  $C>0$ such that 
  \begin{align*}
  \tv \left(\dot y_\nu; [0,T]\right)  \leq 2C\tv(\rho_0)+(2C+6)\tv(u)+V,
 \end{align*}
  proving~(\ref{convergence-doty}), concluding the proof.
\end{proof}

\begin{proofof}{Theorem \ref{final21}}
  Consider a sequence of approximate solutions
  $\left\{\rho_\nu, y_\nu, u_\nu\right\}_{\nu\in\N}$ to~\eqref{eq:control-model}
  constructed in Section~\ref{sse:wft}.
  By Lemma~\ref{convergence-gen}, there exist 
  $\rho \in \C0\left(\R^+;
    \LL1(\R; [0,R])\right)$ such that $\tv\left(\rho(t)\right) < + \infty$
  for a.e. $t$
  and $y \in \Wl{1,1}(\R^+;\R)$ such that, up to a subsequence,
  (\ref{convergence-rho}), (\ref{convergence-y}), and~(\ref{convergence-doty})
  hold.
  We prove that the couple $\left( \rho,  y\right)$ provides
  a solution to~\eqref{eq:control-model} according to
  Definition~\ref{def:sol-CP}.    
  Clearly the points~\ref{def:sol-CP:1} and~\ref{def:sol-CP:2}
  of Definition~\ref{def:sol-CP} hold.
  
  Since $\rho_\nu$ is a weak entropy solution of~\eqref{eq:LWR}--\eqref{eq:constraint}--\eqref{eq:initial} in the sense of Definition~\ref{def:sol-CP}, points~\ref{def:sol-CP:3} and~\ref{def:sol-CP:5}, then, for every $\kappa\in\R$ and for all $\varphi\in\Cc1 (\R^2;\R^+)$, it holds
\begin{align}
    \int_{\R^+}\int_\R &\left(
    |\rho_\nu-\kappa| \pt \varphi +\sgn (\rho_\nu-\kappa)(f(\rho_\nu)-f(\kappa)) \px\varphi
    \right) dx\, dt 
    +\int_\R |\rho_{0,\nu}-\kappa| \varphi(0,x) \, dx \nonumber \\
    &{+}\,  2\int_{\R^+} \left(
    f(\kappa)  - \dot y_\nu (t) \kappa - \min\{f(\kappa) - \dot y_\nu (t) \kappa, F_\alpha(\dot y_\nu(t))\} 
    \right) \varphi(t,y(t)) \, dt \geq 0\,. \label{eq:ECapprox}
\end{align}
  Using~\eqref{convergence-rho} and~\eqref{convergence-doty} and passing to the limit in~\eqref{eq:ECapprox}
  as $\nu \to +\infty$, we conclude that $\rho$ satisfies~\eqref{eq:EC}, hence point~\ref{def:sol-CP:3}
  of Definition~\ref{def:sol-CP} holds.

  We deal now with the point~\ref{def:sol-CP:5}
  of Definition~\ref{def:sol-CP}. Fix $T > 0$ and consider the sets
  \begin{align*}
    D_l := \left\{(t,x)\in [0,T]\times \R :\, x < y(t)\right\},
    \quad
    &
      D_r := \left\{(t,x)\in [0,T]\times \R :\, x > y(t)\right\},
    \\
    D_l^\nu := \left\{(t,x)\in [0,T]\times \R :\, x < y_\nu(t)\right\},
    \quad
    &
      D_r^\nu := \left\{(t,x)\in [0,T]\times \R :\, x > y_\nu(t)\right\}.
  \end{align*}
Fix $\psi \in \Cc1\left(\left]0, T\right[ \times \R; \R^+\right)$. 
By~\cite[Theorem 2.2]{ChenFrid1999},
since $\rho_\nu$ and $\rho$ are weak solutions of \eqref{eq:LWR-model}, 
we deduce that      
\begin{equation}
  \label{test}
  \int_{D_l^\nu} 
  \left(\rho_\nu \pt \psi + f(\rho_\nu) \px \psi\right)
  dt\,dx
  = \int_0^T \left[f(\rho_\nu(t,y_\nu(t)_-)) -
    \rho_\nu(t,y_\nu(t)_-)\dot y_\nu(t)\right]\psi(t,y_\nu(t)) dt
\end{equation}
and
\begin{equation}
  \label{test4}
  \int_{D_l} 
  \left(\rho \pt \psi + f(\rho) \px \psi\right)
  dt\,dx
  = \int_0^T \left[f(\rho(t,y(t)_-)) -
    \rho(t,y(t)_-)\dot y(t)\right]\psi(t,y(t)) dt
\end{equation}
hold.  The construction of $(\rho_\nu, y_\nu)$, \eqref{test} and the fact that
  $\psi \ge 0$ imply
  \begin{equation}
    \label{test0}
    \int_{D_l^\nu} 
      \left(\rho_\nu \pt \psi + f(\rho_\nu) \px \psi\right)
      dt\,dx
    \leq \int_0^T F_{\alpha}(\dot y_\nu (t)) \psi(t,y_\nu(t)) dt.
  \end{equation}
  Lemma~\ref{convergence-gen} and the Dominated Convergence Theorem
  imply
  \begin{equation}
    \label{test1}
  \lim_{\nu\to \infty} \int_{D_l^\nu} \left(\rho_\nu \pt \psi + f(\rho_\nu) \px \psi\right) dt\, dx = \int_{D_l} \left(\rho \pt \psi + f(\rho) \px \psi\right) dt\, dx,
 \end{equation} 
  and
  \begin{equation}
    \label{jvi}
    \lim_{\nu\to \infty} \int_0^T F_{\alpha}(\dot y_\nu(t)) \psi(t,y_\nu(t)) dt
    = \int_0^T F_{\alpha}(\dot y(t)) \psi(t,y(t)) dt.
  \end{equation}
  Therefore, using~\eqref{test4}, \eqref{test0},
  \eqref{test1}, \eqref{jvi}, we get
  \begin{equation*}
    \int_0^T \left[f(\rho(t,y(t)_-))-\rho(t,y(t)_-)\dot y(t)\right]
    \psi(t,y(t)) dt
    \leq  \int_0^T F_{\alpha}(\dot y(t)) \psi(t,y(t)) dt.
  \end{equation*}
  The same holds for the right traces.
  By the arbitrariness of $\psi$, we deduce that 
  \begin{equation*}
    f(\rho(t,y(t)_\pm)) - \rho(t,y(t)_\pm)\dot y(t)
    \leq F_{\alpha}(\dot y(t))
  \end{equation*}
  for a.e $t\in \, ]0,T]$. Thus the couple $\left(\rho, y\right)$
  satisfies point~\ref{def:sol-CP:5} of Definition~\ref{def:sol-CP}.
   
   \medskip
   
  It remains to prove that the couple
  $\left(\rho, y\right)$ satisfies point~\ref{def:sol-CP:4} of
  Definition~\ref{def:sol-CP}. From Lemma \ref{convergence-gen}, \eqref{eq:uapprox} and the construction of $(\rho_\nu,y_\nu,u_\nu)$, there exists a null set $\mathcal{N}$ such that, for every $\bar t\in \R^*_+\setminus \mathcal{N}$,
  \begin{itemize}
  \item
    $\displaystyle \lim_{\nu\to \infty} \dot y_\nu(\bar t\,) = \dot
    y(\bar t\,)$, 
     
  \item
    $\dot y_\nu(\bar t\,) = \min\left\{u_\nu(\bar t\,), v
      \left(\rho_\nu(\bar t,y_\nu(\bar t\,) _+ )\right)\right\}$,
     
  \item $y$ is continuously differentiable at $\bar t$,

  \item
    $\displaystyle \lim_{\nu\to \infty} \rho_\nu(\bar t, x) = \rho(
    \bar t,x)$ for a.e. $x \in \R$,
     
  \item
    $\displaystyle \lim_{\nu\to \infty} u_\nu(\bar t\,) = u(\bar
    t\,)$,
     
  \item $u(\bar t_-) = u(\bar t_+)=:\bar u$.
  \end{itemize}
We have to prove that, if $\bar t \in \R^+ \setminus \mathcal{N}$, then
  \begin{equation}
    \label{eqfinal}
    \lim_{\nu\to \infty} \min \left\{u_\nu(\bar t\,), 
      v \left(\rho_\nu( \bar t,y_\nu( \bar t)_+)\right) \right\}
    = \min \left\{u(\bar t\,), 
      v \left(\rho( \bar t, y( \bar t)_+)\right) \right\}.
  \end{equation}
  To this aim, it is sufficient to prove that 
  $\lim_{\nu\to \infty}v \left(\rho_\nu( \bar t,y_\nu( \bar t)_+)\right)
  = v \left(\rho( \bar t, y( \bar t)_+)\right)$.
  The proof follows closely the one given in \cite[Section 3.3]{LiardPiccoli2} for a constant control speed $u$.
  
  Define 
  $\rho_{\pm} = \lim_{x\to y(\bar t)\pm}\rho(\bar t,x)$, which
  exist since $\tv(\rho(\bar t, \cdot); \R)$ is finite.  
  Various cases can occur. 
  
  \begin{enumerate}
  \item Case: $\rho_-,\rho_+ \in [\rho^*_{\bar u},R]$. 
    From Lemma~\ref{sup}, see also the entropy condition~\eqref{eq:EC},
    the only possible case is $\rho^*_{\bar u}\leq \rho_-\leq \rho_+$.
      
\begin{enumerate}

\item If $\rho_+=\rho_-$, then using Lemma~\ref{suprhostar} and
  Lemma~\ref{suprhostar2}, we have that for every
  $\epsilon>0$ there exists $\bar\nu\in\N$ such that 
  \begin{equation}\label{hahabien}
    v(\min\{\rho_++2\epsilon,R\}) \!\leq\! \min\{u_\nu(\bar t),
    v(\rho_\nu(\bar t,y_\nu(\bar t)_+))\}:=\dot y_\nu(\bar t) \leq
    \min\{u_\nu(\bar t), v(\rho_+-2\epsilon)\}
  \end{equation}
  for every $\nu\geq\bar\nu$.
Since $\bar t \in \R_+^*\setminus \mathcal{N}$, by passing to the limit in \eqref{hahabien} as $\nu\to \infty$, we deduce~\eqref{eqfinal}, for the arbitrariness of $\epsilon$.

\item  If $\rho_+\neq \rho_-$ and $y(\bar t)\leq y_\nu(\bar t)$ up to a subsequence; from Lemma \ref{suprhostar} we have that for every $\epsilon>0$ there exists $\bar\nu\in\N$ such that 
  \begin{equation}\label{hahabien2}
    v(\min\{\rho_++\epsilon,R\}) \leq
    \min\{u_\nu(\bar t),v(\rho_\nu(t,y_\nu(\bar t)_+))\}
    :=\dot y_\nu(\bar t) \leq v(\rho_+-\epsilon)
\end{equation}
for every $\nu\geq\bar\nu$.
Since $\bar t \in \R_+^*\setminus \mathcal{N}$, the equality \eqref{eqfinal} holds by passing to the limit in \eqref{hahabien2} as $\nu\to \infty$. 
\item \label{item:tricky}
If $\rho_+\neq \rho_-$ and $y_\nu(\bar t) < y(\bar t)$ up to a subsequence; in this case, from Lemma \ref{suprhostar} and Lemma \ref{suprhostar2}, 
we have that for every $\epsilon>0$ there exists $\bar\nu\in\N$ such that 
$\rho_\nu(\bar t,y_\nu(\bar t)_+) \in \,]\rho_--2\epsilon,\rho_++2\epsilon[$
for every $\nu\geq\bar\nu$. 

For a.e $t>\bar t$, 
\begin{equation}\label{eq:intn}y_\nu(t)-y_\nu(\bar t )=\int_{\bar t}^t \dot y_\nu(s) ds
\end{equation}
and, by Lemma \ref{convergence-gen}, 
\begin{equation*}
  \lim_{\nu\to \infty} y_\nu(t)=y(t).
\end{equation*}
By Lemma~\ref{oooo}, there exists $c>0$  and a sequence
$t_\nu\searrow\bar t$ such that
$\rho_\nu (s, y_\nu(s)_+)\in \,]\rho_+-\epsilon,\rho_++\epsilon[$ for every $s\in [t_\nu,\bar t+c]$, hence $\dot y_\nu(s)\in \,]v(\rho_++\epsilon),v(\rho_+-\epsilon)[$ for every $s\in [t_\nu,\bar t+c]$.
By passing to the limit in \eqref{eq:intn}, we have  for a.e $t\in \, ]\bar t,\bar t+c]$
\begin{equation*}
    \frac{y(t)-y(\bar t )}{t-\bar t}\in [ v(\rho_++\epsilon), v(\rho_+-\epsilon)].
\end{equation*}
Using that $y$ is continuously differentiable at time $\bar t$ and the
arbitrariness of $\epsilon$, we conclude that~\eqref{eqfinal} holds.
\end{enumerate}

\item Case: $\rho_-,\rho_+\in [0,\rho^*_{\bar u}]$.
  Since $\bar t\in \R_+^*\setminus \mathcal{N}$, 
  \begin{equation*}
    \dot y(\bar t)=\lim_{\nu \to \infty} y_\nu(\bar t)=\lim_{\nu\to \infty}
    \min\{u_{\nu}(\bar t),v(\rho_\nu(\bar t,y_\nu(\bar t)_+)\}.
  \end{equation*}
  From Lemma~\ref{rhon1}, for every $\epsilon>0$ there exists $\bar\nu\in\N$
  such that
  \begin{equation*}
    v(\rho^*_{\bar u}+\epsilon)\leq \min\{u_{\nu}(\bar t ),
    v(\rho_\nu(\bar t,y_\nu(\bar t)_+)\}\leq u_{\nu}(\bar t)
  \end{equation*}
  for $\nu\geq\bar\nu$. Since $\rho_+\in [0,\rho^*_{\bar u}]$,
  we know that 
  \begin{equation*}
    \dot y(\bar t) = u(\bar t) = \min\{u(\bar t),v(\rho(\bar t,y(\bar t)_+))\}.
  \end{equation*}

\item Case:  $\rho_+<\rho^*_{\bar u}<\rho_-$. This case cannot
  occur by Lemma~\ref{ine}. 

\item Case: $\rho_-<\rho^*_{\bar u}<\rho_+$.
\begin{enumerate}
   
\item If $y(\bar t)\leq y_\nu(\bar t)$ up to a subsequence, by Lemma \ref{suprhostar} for every $\epsilon>0$ there exists $\bar\nu$ such that we have
  \begin{equation}\label{hahabienbien}
    v(\min\{\rho_++\epsilon,R\}) \leq
    \min(u_{\nu}(\bar t),v(\rho_\nu(t,y_\nu(\bar t)_+)))
    :=\dot y_\nu(\bar t) \leq v(\rho_+-\epsilon)
  \end{equation}
  for $\nu\geq\bar\nu$.
  Since $\bar t \in \R_+^*\setminus \mathcal{N}$,
  the equality \eqref{eqfinal} holds by passing to the limit in
  \eqref{hahabienbien} as $\nu \to \infty$. 

\item If $y_\nu(\bar t) < y(\bar t)$ up to a subsequence, using Lemma~\ref{ooooa} and reasoning as in item~\ref{item:tricky} we get the conclusion.
\end{enumerate}
\end{enumerate}
This concludes the proof.
\end{proofof} 

\section*{Acknowledgments}
The authors thank Boris Andreianov for useful remarks on entropy conditions.
MG was partially supported by the GNAMPA 2017 project
\textsl{Conservation Laws: from Theory to Technology} and by the PRIN 2015
project \textsl{Hyperbolic Systems of Conservation Laws
  and Fluid Dynamics: Analysis and Applications}.
 PG, TL and BP were partially supported by the Inria Associated Team
 {\it ORESTE - Optimal REroute Strategies for Traffic managEment} (2012-2017). 
 BP acknowledges the support of the National Science Foundation under Grants No. CNS-1837481.
 The work of TL has also been partially funded by the European Research Council (ERC) under the European Union's Horizon 2020 research and innovation program (grant agreement No. 694126-DyCon). 
 
%
%
 
\begin{appendices}
 
  \section{Technical lemmas}
  \label{sec:app:tech-lemmas}
  Here we state some technical lemmas used in the proof of
  Theorem~\ref{final21}. In the following, we denote by $\rho$ and $y$
  respectively the limit functions of wave front tracking approximate
  solutions $\rho_\nu$ and $y_\nu$; see Lemma~\ref{convergence-gen}.
  Moreover, if $\bar t > 0$, then we define
  $\rho_- := \rho\left(\bar t, y(\bar t)_-\right)$ and
  $\rho_+ := \rho\left(\bar t, y(\bar t)_+\right)$.
  
  \begin{lemma}
    \label{suprhostar}
    \textbf{\textup{\cite[Lemma 4]{LiardPiccoli2}}}.
    Let $\bar t \in \R^+ \setminus \mathcal{N}$ and $\epsilon>0$.
    Assume that $\rho_-,\rho_+\in [0,R]$.
    There exist $0 < \tilde \delta < \delta$ such that 
    \begin{equation*}
      \rho(\bar t,x) \in
      \left\{
        \begin{array}{ll}
          \left] \max\{\rho_--{\epsilon}/{2},0\},
          \min\{\rho_-+{\epsilon}/{2},R\}
          \right[,
          &
            x \in \, ]y(\bar t)-\delta, y(\bar t)[,
            \vspace{.2cm}
          \\
          \left] \max\{\rho_+-{\epsilon}/{2},0\},
          \min\{\rho_++{\epsilon}/{2},R\}\right[,
          &
            x\in\, ]y(\bar t),y(\bar t)+\delta[,
        \end{array}
      \right.
    \end{equation*}
    and, for $\nu\in \N^*$ sufficiently large, 
    \begin{equation*}
      \rho_\nu(\bar t,x) \!\in\!
      \left\{
        \!\!
        \begin{array}{ll} 
          \left] \max\{\rho_--\epsilon,0\},
          \min\{\rho_-+\epsilon,R\}\right[,
          &
            x\in \,]\min\{y(\bar t),y_\nu(\bar t)\}
            - \tilde{\delta},\min\{y(\bar t),y_\nu(\bar t)\}[,
          \vspace{.2cm}\\
          \left] \max\{\rho_+-\epsilon,0\},
          \min\{\rho_++\epsilon,R\}\right[,
          &
            x\in \,]\max\{y(\bar t),y_\nu(\bar t)\},
            \max\{y(\bar t),y_\nu(\bar t)\}+\tilde \delta[;
        \end{array}
      \right.
    \end{equation*}
    see Figure~\ref{2i}.
  \end{lemma}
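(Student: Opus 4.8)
The plan is to treat the two displayed inclusions separately, the first being a direct consequence of the existence of one–sided traces and the second a transfer of this band localization from the limit $\rho$ to the approximations $\rho_\nu$, following the argument of \cite[Lemma 4]{LiardPiccoli2}. I first note that none of the quantities in the statement involves the control $u$: the only inputs needed are the uniform bound $\tv(\rho_\nu(\bar t,\cdot))\le\Upsilon(0)$ from Corollary~\ref{cor:TVbound}, the $L^1$ convergence $\rho_\nu(\bar t,\cdot)\to\rho(\bar t,\cdot)$ together with the pointwise a.e. convergence available since $\bar t\notin\mathcal N$, and $y_\nu(\bar t)\to y(\bar t)$, all provided by Lemma~\ref{convergence-gen}. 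Hence the proof is essentially identical to the constant–speed case treated in \cite{LiardPiccoli2}.

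For the first inclusion, since $\rho(\bar t,\cdot)$ has finite total variation, the one–sided limits $\rho_\pm=\lim_{x\to y(\bar t)^\pm}\rho(\bar t,x)$ exist. By the very definition of these limits there is $\delta>0$ such that $|\rho(\bar t,x)-\rho_-|<\epsilon/2$ for $x\in\,]y(\bar t)-\delta,y(\bar t)[$ and $|\rho(\bar t,x)-\rho_+|<\epsilon/2$ for $x\in\,]y(\bar t),y(\bar t)+\delta[$; intersecting with $[0,R]$ produces the $\max/\min$ truncations in the statement. Shrinking $\delta$ if necessary, I would in addition arrange $\tv\bigl(\rho(\bar t,\cdot);\,]y(\bar t)-\delta,y(\bar t)[\,\bigr)<\epsilon/4$ and the analogous bound on the right, which is legitimate because the total variation measure of $\rho(\bar t,\cdot)$ is finite and the open intervals shrink to the empty set as $\delta\to0$.

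For the second inclusion I would fix this $\delta$ and then choose $\tilde\delta\in\,]0,\delta/2[$. Using $y_\nu(\bar t)\to y(\bar t)$, for $\nu$ large one has $|y_\nu(\bar t)-y(\bar t)|<\delta/2$, which guarantees that the interval $]\min\{y(\bar t),y_\nu(\bar t)\}-\tilde\delta,\min\{y(\bar t),y_\nu(\bar t)\}[$ lies inside $]y(\bar t)-\delta,y(\bar t)[$ and, crucially, strictly to the left of $y_\nu(\bar t)$; symmetrically for the right interval. Thus on these two intervals $\rho_\nu(\bar t,\cdot)$ carries only \emph{classical} waves, the undercompressive shock being confined to $x=y_\nu(\bar t)$ by point~\ref{point:NP} of Definition~\ref{def:epswf}. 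The band localization for $\rho_\nu$ then follows by combining the a.e. convergence at $\bar t$, which places $\rho_\nu(\bar t,x)$ within $\epsilon$ of $\rho_-$ for a.e. $x$ in the interval, with the fact that off the AV trajectory the finite–speed classical wave structure, together with the uniform total variation bound, prevents thin oscillations from escaping the $\epsilon$–band near the moving endpoint $\min\{y(\bar t),y_\nu(\bar t)\}$.

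The main obstacle is precisely this last point: upgrading the merely $L^1$/a.e. convergence into a \emph{uniform} inclusion valid for every $x$ in the subinterval adjacent to the approximate AV position, with $\tilde\delta$ fixed independently of $\nu$. This is where the specific geometry matters — the intervals are deliberately taken off the uncertainty region $[\min\{y(\bar t),y_\nu(\bar t)\},\max\{y(\bar t),y_\nu(\bar t)\}]$ in which the two trajectories straddle — and where one must invoke the convergence of the traces $\rho_\nu(\cdot,y_\nu(\cdot)^\mp)$ to $\rho(\cdot,y(\cdot)^\mp)$, obtained as in \cite{LiardPiccoli2} from the normal–trace machinery of \cite{ChenFrid1999} and the finite propagation speed, rather than from soft $L^1$ compactness alone.
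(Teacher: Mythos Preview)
The paper does not supply its own proof of this lemma: it is stated in Appendix~\ref{sec:app:tech-lemmas} with a bare citation to \cite[Lemma~4]{LiardPiccoli2} and no argument is reproduced. There is therefore nothing in the paper to compare your proposal against beyond the reference itself.

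That said, your outline matches the structure one expects from the cited result: fix $\delta$ from the one–sided limits of the BV function $\rho(\bar t,\cdot)$, then pass to the approximations using the convergences $\rho_\nu(\bar t,\cdot)\to\rho(\bar t,\cdot)$ and $y_\nu(\bar t)\to y(\bar t)$ from Lemma~\ref{convergence-gen}, taking care to work on intervals that lie strictly to one side of \emph{both} $y(\bar t)$ and $y_\nu(\bar t)$. You also correctly identify the one genuine difficulty, namely that $L^1$/a.e.\ convergence together with the uniform bound $\tv(\rho_\nu(\bar t,\cdot))\le\Upsilon(0)$ does \emph{not} by itself rule out a thin spike of $\rho_\nu(\bar t,\cdot)$ near the moving endpoint $\min\{y(\bar t),y_\nu(\bar t)\}$; a soft counterexample shows this. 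Your final paragraph points to the right remedy (the trace machinery of \cite{ChenFrid1999} and the specific wave–front tracking structure exploited in \cite{LiardPiccoli2}) but stops short of actually executing that step. Since the present paper defers the whole argument to \cite{LiardPiccoli2}, this residual gap in your sketch is exactly the gap the paper itself leaves to the reference.
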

  \begin{figure}[h!]
    \centering
    \includegraphics[scale=1.5]{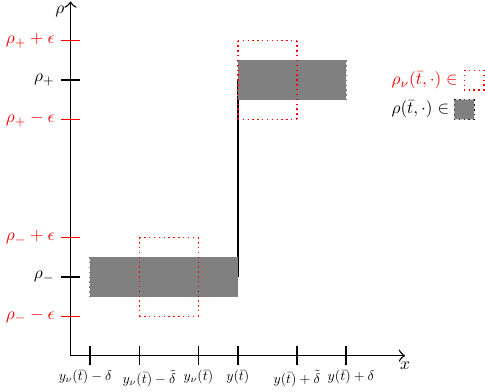}
    \caption{Illustration of Lemma~\ref{suprhostar} in the case
      $\rho_- < \rho_+$ and $y_\nu(\bar t) < y(\bar t)$.
      The approximate density $\rho_\nu(\bar t,\cdot)$
      in the space interval
      $]y_\nu(\bar t)-\tilde \delta,y_\nu(\bar t)[ \, \cup
      \, ]y(\bar t),y(\bar t)+\tilde \delta[$
      belongs to the area surrounded by the red dotted lines, while
      $\rho(\bar t,\cdot)$ in the interval
      $]y(\bar t)- \delta,y(\bar t)+ \delta[$ belongs to the grey shaded zone.}
    \label{2i}
  \end{figure}
  
  \begin{lemma}
    \label{sup}
    Let $\bar t \in \R^+ \setminus \mathcal{N}$.
    If $\rho_-,\rho_+\in [\rho^*_{\bar u},R]$, then $\rho_- \leq \rho_+$.
  \end{lemma}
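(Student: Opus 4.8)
\noindent\emph{Proof (plan).} The plan is to localize the entropy inequality~\eqref{eq:EC} along the graph of $y$ at the time $\bar t$, and then to test it against a constant $\kappa$ lying strictly between the traces $\rho_-$ and $\rho_+$. First I would fix the vehicle speed at $\bar t$: since $\rho_+\in[\rho^*_{\bar u},R]$ and $v$ is strictly decreasing by~\ref{hyp:F}, one has $v(\rho_+)\le v(\rho^*_{\bar u})=\bar u$, so point~\ref{def:sol-CP:4} of Definition~\ref{def:sol-CP} forces $\dot y(\bar t)=\min\{\bar u,v(\rho_+)\}=v(\rho_+)=:\lambda$. If $\rho_+=R$ the conclusion is immediate, so assume $\rho_+<R$, and pass to the moving frame by setting $g(\rho):=f(\rho)-\lambda\rho=\rho\,(v(\rho)-v(\rho_+))$, which is strictly concave by~\ref{hyp:F}, satisfies $g(0)=g(\rho_+)=0$, and is therefore $<0$ on $\,]\rho_+,R]$; note also $F_\alpha(\lambda)\ge 0$ (take $\rho=0$ in its definition).

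The second step is to read~\eqref{eq:EC} as a condition on the traces. Since $\rho$ has locally bounded variation in $(t,x)$ (the uniform total variation bounds of Corollary~\ref{cor:TVbound} together with finite propagation speed), the Kruzhkov entropy production $\partial_t|\rho-\kappa|+\partial_x(\sgn(\rho-\kappa)(f(\rho)-f(\kappa)))$ is a Radon measure, and~\eqref{eq:EC} states exactly that it is dominated by the measure $2(f(\kappa)-\dot y(t)\kappa-F_\alpha(\dot y(t)))^+\,dt$ carried by the graph of $y$. Comparing the densities of these two measures with respect to the one-dimensional Hausdorff measure on that graph — the density of the entropy production there being, for a.e.\ $\bar t$, the entropy-flux jump across the vehicle written in the moving frame — one obtains, for a.e.\ $\bar t>0$ (hence, after enlarging $\mathcal N$, for every $\bar t\in\R^+\setminus\mathcal N$) and every $\kappa\in\R$,
\begin{equation}\label{eq:jump-plan}
\sgn(\rho_--\kappa)(g(\rho_-)-g(\kappa))-\sgn(\rho_+-\kappa)(g(\rho_+)-g(\kappa))+2(g(\kappa)-F_\alpha(\lambda))^+\ \ge\ 0.
\end{equation}
I expect this localization — disentangling the interface contribution from the interior entropy dissipation — to be the only genuinely delicate point; it is the standard ``pointwise'' reading of~\eqref{eq:EC} in this setting, cf.~\cite{AGS2010,ChalonsGoatinSeguin2013,MR3657113}.

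Finally, I argue by contradiction: suppose $\rho_->\rho_+$ and pick any $\kappa\in\,]\rho_+,\rho_-[$. Then $\sgn(\rho_--\kappa)=1$ and $\sgn(\rho_+-\kappa)=-1$, and since $\kappa>\rho_+$ we have $g(\kappa)<0\le F_\alpha(\lambda)$, so the last term of~\eqref{eq:jump-plan} vanishes. Using $g(\rho_+)=0$, \eqref{eq:jump-plan} reduces to $g(\rho_-)\ge 2g(\kappa)$; letting $\kappa\downarrow\rho_+$ and using the continuity of $g$ gives $g(\rho_-)\ge 0$, contradicting $g(\rho_-)<0$ (established in the first step, since $\rho_-\in\,]\rho_+,R]$). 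Hence $\rho_-\le\rho_+$.
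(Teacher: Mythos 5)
Your plan has a genuine circularity problem. In the very first step you fix the vehicle speed by invoking point~\ref{def:sol-CP:4} of Definition~\ref{def:sol-CP} to get $\dot y(\bar t)=\min\{\bar u,v(\rho_+)\}=v(\rho_+)$. But in this paper the lemma is stated for the \emph{limit} $(\rho,y)$ of the wave-front tracking approximations (see the preamble of Appendix~\ref{sec:app:tech-lemmas}), and it is invoked in the proof of Theorem~\ref{final21} precisely as a tool to \emph{establish} point~\ref{def:sol-CP:4}; at that stage only points~\ref{def:sol-CP:1}, \ref{def:sol-CP:2}, \ref{def:sol-CP:3} and \ref{def:sol-CP:5} are known for the limit. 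The gap is not merely formal: points~\ref{def:sol-CP:3} and~\ref{def:sol-CP:5} make no reference to the control $u$ at all, so they cannot by themselves force any relation between $\dot y(\bar t)$, $\bar u$ and the traces. Indeed, your own localized inequality \eqref{eq:jump-plan}, combined with the Rankine--Hugoniot relation $g(\rho_-)=g(\rho_+)$ and point~\ref{def:sol-CP:5}, only excludes decreasing jumps \emph{other than} the undercompressive shock $\rho_-=\hat\rho_\lambda>\check\rho_\lambda=\rho_+$ with $\lambda=\dot y(\bar t)$; and if nothing ties $\lambda$ to $\bar u$, one can have $v(\check\rho_\lambda)\le\bar u$, i.e. $\rho^*_{\bar u}\le\check\rho_\lambda<\hat\rho_\lambda$, which is exactly the configuration the lemma must rule out. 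So the conclusion genuinely needs information coming from the approximating sequence (through which $u$ enters, via ${\dot y}_\nu=\min\{u_\nu,v(\rho_\nu(\cdot,y_\nu{}_+))\}$ and the structure of admissible jumps along $y_\nu$), not just the entropy formulation of the limit.

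That is the route the paper takes: the proof is the approximation argument of \cite[Lemma 5]{LiardPiccoli2}, carried out on $\rho_\nu$, $y_\nu$ near $y_\nu(\bar t)$ (using the trace-localization lemmas of the appendix), with the only new ingredient being the observation that $\rho^*_{u_\nu(\bar t)}<\rho^*_{\bar u}+\epsilon\le\rho_++\epsilon$ for $\nu$ large, which adapts the constant-speed argument to the time-varying control. The remainder of your argument — the trace form of \eqref{eq:EC} along the Lipschitz curve $y$ and the concavity/contradiction step with $\kappa\in\,]\rho_+,\rho_-[$ — is sound and would give a clean limit-level proof \emph{once} the identification $\dot y(\bar t)=v(\rho_+)$ (or at least $\dot y(\bar t)\ge v(\rho_+)$, equivalently $\hat\rho_{\dot y(\bar t)}\le\rho^*_{\bar u}$) is available; but here that identification is the very statement under construction, so as written the proposal does not prove the lemma in the context in which the paper uses it.
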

  The proof is identical to that of~\cite[Lemma 5]{LiardPiccoli2} remarking
  that, for every $\epsilon > 0$, there exists $\bar \nu \in \N$ such that
  $\rho^*_{u_\nu(\bar t)} < \rho^*_{\bar u} + \epsilon \leq \rho_+ + \epsilon$
  for every $\nu \ge \bar \nu$.

  \begin{lemma}
    \label{suprhostar2}
    Let $\bar t \in \R^+ \setminus \mathcal{N}$ and $\epsilon>0$.
    Assume that $\rho_{\bar u}^*\leq \rho_- \leq \rho_+$.
    Then there exists $\bar \nu \in \N$ such that
    \begin{equation*}
      \rho_\nu(\bar t,x) \in 
      \,]\rho_--2\epsilon,\min(\rho_++2\epsilon,R)[,
    \end{equation*}
    for every $\nu \ge \bar \nu$ and for every
    $x\in \,]\min\{y(\bar t),y_\nu(\bar t)\},
    \max\{y(\bar t),y_\nu(\bar t)\}[$; see Figure~\ref{2ii}.
  \end{lemma}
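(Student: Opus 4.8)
The plan is to transport the one-sided density estimates of Lemma~\ref{suprhostar} across the vanishing interval between $y_\nu(\bar t)$ and $y(\bar t)$ by means of an Oleinik-type one-sided Lipschitz bound. Write $m_\nu:=\min\{y(\bar t),y_\nu(\bar t)\}$ and $M_\nu:=\max\{y(\bar t),y_\nu(\bar t)\}$; by Lemma~\ref{convergence-gen} we have $M_\nu-m_\nu\to 0$, so for $\nu$ large $]m_\nu,M_\nu[$ lies well inside the window $]\,y(\bar t)-\tilde\delta,\,y(\bar t)+\tilde\delta\,[$ of Lemma~\ref{suprhostar}. Since enlarging $\epsilon$ only weakens the conclusion, I may and do replace $\epsilon$ by a smaller $\epsilon'\le\epsilon$ whenever convenient; in the case $\bar u<V$ I choose it so small that $\rho^*_{\bar u}-\epsilon'>\hat\rho_{\bar u}$, which is possible by~\eqref{eq:order-point-2}.

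The first substantive step rules out an undercompressive shock of $\rho_\nu(\bar t,\cdot)$ in that window for $\nu$ large; since such shocks sit only on the AV trajectory, it suffices to exclude one at $y_\nu(\bar t)$. Suppose first $\bar u<V$. As $\bar t\notin\mathcal{N}$ we have $u_\nu(\bar t)\to\bar u$, and since $\hat\rho(\cdot),\check\rho(\cdot)$ are monotone and continuous with $\check\rho_{\bar u}\le\hat\rho_{\bar u}<\rho^*_{\bar u}$ (Lemma~\ref{lem:monotonicity} and~\eqref{eq:order-point-2}), for $\nu$ large $\check\rho_{u_\nu(\bar t)}\le\hat\rho_{u_\nu(\bar t)}<\rho^*_{\bar u}-\epsilon'$. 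By Lemma~\ref{suprhostar} applied with parameter $\epsilon'$, the trace of $\rho_\nu(\bar t,\cdot)$ on whichever side of $y_\nu(\bar t)$ lies outside $[m_\nu,M_\nu]$ is within $\epsilon'$ of $\rho_-\ge\rho^*_{\bar u}$, or of $\rho_+\ge\rho_-$, hence at least $\rho^*_{\bar u}-\epsilon'>\hat\rho_{u_\nu(\bar t)}\ge\check\rho_{u_\nu(\bar t)}$; it therefore equals neither $\hat\rho_{u_\nu(\bar t)}$ nor $\check\rho_{u_\nu(\bar t)}$, so no undercompressive shock is present at $y_\nu(\bar t)$. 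If instead $\bar u=V$, then $\hat\rho_{u_\nu(\bar t)}-\check\rho_{u_\nu(\bar t)}\to 0$, so any undercompressive shock there is infinitesimal and merely contributes to the error term below. In all cases the jump of $\rho_\nu(\bar t,\cdot)$ across $y_\nu(\bar t)$ is classical up to an $o(1)$ correction, so the trace of $\rho_\nu(\bar t,\cdot)$ at $y_\nu(\bar t)$ is within $\epsilon'+\eps_\rho^\nu$ of $\rho_-$ from the left and of $\rho_+$ from the right.

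The second step is the Oleinik estimate itself. The window $]m_\nu-\tilde\delta,M_\nu+\tilde\delta[$ lies on one fixed side of the AV at time $\bar t$, and there (by the previous step) $\rho_\nu$ is, up to infinitesimal corrections, a classical entropy solution of the strictly concave law~\eqref{eq:LWR} with $f''\le-\beta<0$ by~\ref{hyp:F}; hence $\rho_\nu(\bar t,x_1)-\rho_\nu(\bar t,x_2)\le(x_2-x_1)/(\beta\bar t)+o(1)$ for all $m_\nu-\tilde\delta<x_1<x_2<M_\nu+\tilde\delta$, as in the half-space estimates of~\cite[Section~3.3]{LiardPiccoli2}. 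Fix $x\in\,]m_\nu,M_\nu[$. Taking $x_1$ just to the left of $[m_\nu,M_\nu]$ (or $x_1$ the appropriate trace at $y_\nu(\bar t)$) with $x-x_1\le 2(M_\nu-m_\nu)$ and $x_2=x$ gives $\rho_\nu(\bar t,x)\ge\rho_--\epsilon'-2(M_\nu-m_\nu)/(\beta\bar t)-o(1)$; taking $x_1=x$ and $x_2$ just to the right with $x_2-x\le 2(M_\nu-m_\nu)$ gives $\rho_\nu(\bar t,x)\le\rho_++\epsilon'+2(M_\nu-m_\nu)/(\beta\bar t)+o(1)$. Since $\epsilon'\le\epsilon$, $\eps_\rho^\nu\to 0$ and $M_\nu-m_\nu\to 0$, the correction terms are $<\epsilon$ for $\nu$ large, so $\rho_\nu(\bar t,x)\in\,]\rho_--2\epsilon,\rho_++2\epsilon[$; together with the trivial bound $\rho_\nu(\bar t,x)\le R$, this is the assertion.

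The step I expect to be the real obstacle is the rigorous justification of the Oleinik bound with constant $1/(\beta\bar t)$ right up against the moving boundary $x=y_\nu(t)$: one has to verify that the backward generalized characteristics reaching $]m_\nu,M_\nu[$ at time $\bar t$ remain on the classical side of the AV trajectory throughout $[0,\bar t]$, which is precisely where the hypothesis $\rho_-\ge\rho^*_{\bar u}$ enters — it forces the approximate AV speed $\dot y_\nu(\bar t)$ to be $v(\rho_+)$ up to $o(1)$, while the characteristic speed $f'(\rho_+)=v(\rho_+)+\rho_+v'(\rho_+)$ just downstream of the AV is no larger than $v(\rho_+)$, so those characteristics stay to the right of the AV at all earlier times. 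An acceptable alternative is to quote and adapt the half-space one-sided Lipschitz bounds already proved in~\cite{LiardPiccoli2}, the only genuinely new input being the convergence $u_\nu(\bar t)\to\bar u$, which makes $\hat\rho_{u_\nu(\bar t)}\to\hat\rho_{\bar u}$ and $\check\rho_{u_\nu(\bar t)}\to\check\rho_{\bar u}$ and thus lets the constant-speed estimates carry over.
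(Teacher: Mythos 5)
The paper does not reprove this lemma at all: its ``proof'' is the one-line remark that the argument is identical to \cite[Lemma 6]{LiardPiccoli2}, the only new ingredient being that $u_\nu(\bar t)\to\bar u$ (since $\bar t\notin\mathcal N$), whence $\hat\rho_{u_\nu(\bar t)}<\hat\rho_{\bar u}+\epsilon\le\rho^*_{\bar u}-2\epsilon$ for $\nu$ large, so the undercompressive states at the AV lie strictly below the density range $[\rho^*_{\bar u},R]$ under consideration. That is precisely your step 1 together with your closing sentence, so your ``acceptable alternative'' coincides with what the paper actually does. Your primary, self-contained route via an Oleinik-type bound is, however, not justified, and the gap is exactly at the step you flag. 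The one-sided Lipschitz estimate with constant $1/(\beta\bar t)$ is valid for entropy solutions of the unconstrained law, but here new waves are created along the AV trajectory at positive times: by Lemma~\ref{Y2}, a jump of $u_\nu$ can cancel an undercompressive shock and emit a rarefaction fan (or create a $UC$-wave plus a shock and a fan), and such jumps can occur at times $\bar t-\tau_\nu$ with $\tau_\nu\to 0$, because $\bar t\notin\mathcal N$ controls $u_\nu$ only at the single time $\bar t$, not on a neighbourhood; for $t<\bar t$ close to $\bar t$ the control $u_\nu(t)$ may be far from $\bar u$, so the downstream density (close to $\rho_+\ge\rho^*_{\bar u}$) can lie in $]\check\rho_{u_\nu(t)},\hat\rho_{u_\nu(t)}[$ and the constraint can be active. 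A fan born at time $\bar t-\tau_\nu$ has internal slope of order $1/(\beta\tau_\nu)$, its edges can outrun the AV, and at time $\bar t$ it can sit entirely inside $]m_\nu,M_\nu[$ carrying values well below $\rho_--2\epsilon$; nothing in your step 1 (which only excludes a $UC$-wave sitting at $y_\nu(\bar t)$ at time $\bar t$) or in Lemma~\ref{suprhostar} (which controls $\rho_\nu(\bar t,\cdot)$ only outside $[m_\nu,M_\nu]$) rules this out.

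Your attempted repair via backward generalized characteristics uses only time-$\bar t$ information, namely $\dot y_\nu(\bar t)\approx v(\rho_+)\ge f'(\rho_+)$; it does not give a lower bound, uniform in $\nu$, on the time elapsed since the creation of the waves that are inside $]m_\nu,M_\nu[$ at time $\bar t$, which is what the constant in the Oleinik estimate requires, nor does it control the AV speed and downstream densities at the earlier times where $u_\nu$ may jump. (Also, the window $]m_\nu-\tilde\delta,M_\nu+\tilde\delta[$ does not lie on one side of the AV: $y_\nu(\bar t)$ is an endpoint of $[m_\nu,M_\nu]$, and at earlier times the AV path borders the region you want to treat as purely classical.) Excluding precisely this near-$\bar t$ wave production inside the shrinking interval is the actual content of \cite[Lemma 6]{LiardPiccoli2}, which proceeds by tracing the wave-front structure rather than by a decay estimate; this is why the paper reduces the lemma to that reference, adding only the monotonicity/convergence remark on $\hat\rho_{u_\nu(\bar t)}$ that you also make. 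As written, your main argument therefore has a genuine gap; the correct route is the one you relegate to the last sentence.
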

  The proof is identical to that of~\cite[Lemma 6]{LiardPiccoli2} remarking
  that, for every $\epsilon > 0$, there exists $\bar \nu \in \N$ such that
  $\hat \rho_{u_\nu(\bar t)} < \hat \rho_{\bar u} + \epsilon
  \leq \rho^*_{\bar u} - 2 \epsilon$
  for every $\nu \ge \bar \nu$.
  \begin{figure}[h!]
    \centering
    \includegraphics[scale=1.5]{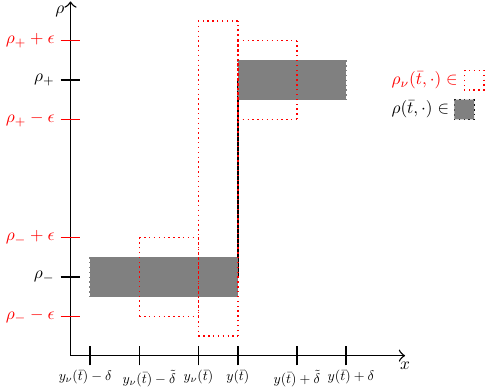}
    \caption{Illustration of Lemma~\ref{suprhostar2} in the case
      $\rho^*_{\bar u} \leq \rho_- < \rho_+$ and
      $y_\nu(\bar t)<y(\bar t)$.
      The approximate density $\rho_\nu(\bar t,\cdot)$
      in the space interval
      $]y_\nu(\bar t)-\tilde \delta,y(\bar t)+\tilde \delta[$
      belongs to the area surrounded by the red dotted lines,
      while $\rho(\bar t,\cdot)$ in the interval
      $]y(\bar t)- \delta,y(\bar t)+ \delta[$ belongs to the grey shaded zone.}
    \label{2ii}
  \end{figure}

  \begin{lemma}
    \label{oooo} 
    \textbf{\textup{\cite[Lemma 7]{LiardPiccoli2}}}.
    Let $\bar t \in \R^+ \setminus \mathcal{N}$ and $\epsilon>0$.
    Assume that $\rho^*_{\bar u}\leq \rho_- < \rho_+$
    and $y_\nu(\bar t) < y(\bar t)$ (up to a subsequence)
    for every $\nu \in \N$.
    Then there exist a domain $\mathcal T_0$ and, for every $\nu \in \N$,
    a piecewise constant function $\xi_\nu(\cdot)$ and a time
    $t_f^{\xi_\nu} > \bar t$ such that $(t,\xi_\nu(t))\in \mathcal{T}_0$
    for every $t\in [\bar t,t_f^{\xi_\nu}[$, 
    and
    $\rho_\nu(t,x+)\in\, ]\rho_+-\epsilon,\rho_++\epsilon[$
    for every $(t,x)\in \left([\bar t,+\infty[\, \times  \,]\xi_\nu(t),
      +\infty[\right)\cap \mathcal{T}_0$.
      Moreover, there exist
      $c>0$ independent of $\nu$ and  $t_\nu\in[\bar t,\bar t+c[$ such that $y_\nu(t_\nu)=\xi_\nu(t_\nu)$ and $\lim_{\nu\to \infty} t_\nu=\bar t$;
       see Figure~\ref{lemma51}.
  \end{lemma}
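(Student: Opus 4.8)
The statement is \cite[Lemma 7]{LiardPiccoli2}, so the plan is to reproduce that proof while tracking the two places where the time-varying control enters. First I would localise near $(\bar t,y(\bar t))$. Fix $\epsilon>0$ small; by Lemma~\ref{suprhostar} and Lemma~\ref{suprhostar2} there is $\nu$ beyond which $\rho_\nu(\bar t,\cdot)$ lies in $(\rho_+-\epsilon,\rho_++\epsilon)$ on $(y(\bar t),y(\bar t)+\tilde\delta)$, in $(\rho_--2\epsilon,\rho_++2\epsilon)$ on $(y_\nu(\bar t),y(\bar t))$, and within $\epsilon$ of $\rho_-$ on a left neighbourhood of $y_\nu(\bar t)$; since $\rho_-\geq\rho^*_{\bar u}$, the whole relevant interval then carries density $\geq\rho^*_{\bar u}-2\epsilon$. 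Because $\bar t\notin\mathcal N$, we have $y_\nu(\bar t)\to y(\bar t)$ by~\eqref{convergence-y} and $u_\nu(\bar t)\to\bar u$; moreover, choosing a $\nu$-independent $c>0$ first and using $u(\bar t_-)=u(\bar t_+)=\bar u$ with $\tv(u)<\infty$, the $u_\nu$ stay within an arbitrarily small window $(\bar u-\eta,\bar u+\eta)$ on $[\bar t,\bar t+c]$ for $\nu$ large; in particular, by Lemma~\ref{lem:monotonicity}, $\hat\rho_{u_\nu(t)}<\rho^*_{\bar u}-2\epsilon<\rho_+-\epsilon$ there (if $\bar u<V$; the degenerate case $\bar u=V$ is handled as in~\cite{LiardPiccoli2}, since $\hat\rho_{u_\nu(t)}\to0$ and $v(\rho_-)<V$). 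I would take $\mathcal T_0$ to be a triangle based at $t=\bar t$ on a fixed segment around $y(\bar t)$, with lateral sides of slopes $\pm\lambda$, $\lambda:=\max_{[0,R]}|f'|$, and height $c$ so small that every wave inside $\mathcal T_0$ at a time $t>\bar t$ emanates at $t=\bar t$ from the interval on which the lemmas above apply, and no wave enters $\mathcal T_0$ through a lateral side.

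Next I would build $\xi_\nu$. Let $\xi_\nu(\bar t)=x_\nu^0\in[y_\nu(\bar t),y(\bar t)]$ be the interface between the region where $\rho_\nu(\bar t,\cdot)<\rho_+-\epsilon$ and the adjacent region --- which contains $(y(\bar t),y(\bar t)+\tilde\delta)$ --- where $\rho_\nu(\bar t,\cdot)\geq\rho_+-\epsilon$; if $\rho_\nu(\bar t,y_\nu(\bar t)_+)\geq\rho_+-\epsilon$, put $x_\nu^0=y_\nu(\bar t)$. Lemma~\ref{suprhostar} gives $0\leq x_\nu^0-y_\nu(\bar t)\leq y(\bar t)-y_\nu(\bar t)=:d_\nu\to0$. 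I would then let $\xi_\nu$ follow forward the corresponding shock, continuing at each interaction along the outgoing front that still has density below $\rho_+-\epsilon$ on the left and in $(\rho_+-\epsilon,\rho_++\epsilon)$ on the right, and stop at the first time $t_f^{\xi_\nu}$ at which $\xi_\nu$ would leave $\mathcal T_0$. To the right of $\xi_\nu$ within $\mathcal T_0$, $\rho_\nu(t,x_+)\in(\rho_+-\epsilon,\rho_++\epsilon)$: no wave enters from outside $\mathcal T_0$ by the slope choice, classical interactions keep the density in the convex hull of the colliding states, and --- the first point where the control enters --- no undercompressive shock can form along $y_\nu$ there, because the downstream trace is $\geq\rho_+-\epsilon>\hat\rho_{u_\nu(t)}$, so the AV stays in the congested regime and a jump of $u_\nu$ at such a position only resets $\dot y_\nu$ to $\min\{u_\nu,v(\rho_+)\}$ without emitting a non-classical front (cf.\ Lemmas~\ref{Y1} and~\ref{Y5}).

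Finally I would show $y_\nu$ meets $\xi_\nu$ at some $t_\nu\searrow\bar t$. Between $y_\nu$ and $\xi_\nu$ every density lies in the compact set $[\rho^*_{\bar u}-2\epsilon,\rho_+-\epsilon]$, bounded away from $0$ for $\epsilon$ small; on it, hypothesis~\ref{hyp:F} ($f''\leq-\beta$), the strict ordering $\rho_+>\rho^*_{\bar u}$ and $u_\nu\to\bar u$ yield a $\nu$-independent $c_0>0$ with $\dot y_\nu\geq c_0+\dot\xi_\nu$ and $\dot y_\nu\geq c_0+(\text{speed of the front immediately ahead of the AV})$, provided $\epsilon,\eta$ are small and $\nu$ large. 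Hence $\xi_\nu(t)-y_\nu(t)$, which is $\leq d_\nu$ at $t=\bar t$, decreases at rate $\geq c_0$, so the AV absorbs the intervening fronts and reaches $\xi_\nu$ at a time $t_\nu\leq\bar t+d_\nu/c_0$; as $d_\nu\to0$, for $\nu$ large $t_\nu\in[\bar t,\bar t+c)$ and $t_\nu\to\bar t$. After $t_\nu$ the AV remains the left edge of the zone where $\rho_\nu$ is close to $\rho_+$, since it moves at $\min\{u_\nu,v(\rho_+)\}<\bar u$ yet strictly faster than $f'(\rho_+)$ and therefore keeps catching the density-$\rho_+$ fronts ahead; this yields the last claim of the lemma.

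The main obstacle I foresee is the bookkeeping in these last two steps: checking that \emph{$\rho_\nu$ close to $\rho_+$ to the right of $\xi_\nu$} really persists through every admissible interaction, in particular those at the AV and those triggered by a jump of $u_\nu$, which needs the explicit Riemann solvers of Appendix~\ref{se:RP}; and extracting the uniform constant $c_0$, for which~\ref{hyp:F} and $\rho_+>\rho^*_{\bar u}$ are essential. The only genuinely new ingredient relative to \cite[Lemma 7]{LiardPiccoli2} is controlling the jumps of $u_\nu$ on $[\bar t,\bar t+c]$, achieved by first shrinking $c$ so that $\tv(u;[\bar t,\bar t+c])$ is small and then noting that these jumps are inert while the AV faces a downstream density above $\hat\rho_{u_\nu}$.
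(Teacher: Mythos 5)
The paper offers no proof of this statement at all: it is imported verbatim from \cite[Lemma 7]{LiardPiccoli2} (proved there for a constant control), and---unlike Lemmas~\ref{sup} and~\ref{suprhostar2}, which at least receive a one-line adaptation remark---no indication is given of how the time dependence of $u_\nu$ is absorbed. Your proposal therefore does more than the paper itself: you reconstruct the mechanism of the cited proof (localisation via Lemmas~\ref{suprhostar} and~\ref{suprhostar2}, a triangular domain of determinacy, a tracking curve $\xi_\nu$, and a uniform speed gap coming from $v(\rho)-f'(\rho)=-\rho\, v'(\rho)>0$ on the congested density range), and you isolate exactly the two places where the control enters: no undercompressive shock is triggered at $y_\nu$ because the downstream trace stays above $\hat\rho_{u_\nu(t)}$ (consistent with Lemmas~\ref{Y1} and~\ref{Y5} and with Lemma~\ref{lem:monotonicity}), and the oscillation of $u_\nu$ on $[\bar t,\bar t+c]$ must be small. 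This is precisely the kind of adaptation the paper records after Lemmas~\ref{sup} and~\ref{suprhostar2}, so your route is the explicit version of what the bare citation leaves implicit. Two steps should be written out rather than asserted: the locally uniform bound $|u_\nu(t)-\bar u|<\eta$ on $[\bar t,\bar t+c]$ for large $\nu$ does not follow from $L^1$ convergence alone---it uses $\tv(u_\nu)\le\tv(u)$ from~\eqref{eq:uapprox}, the lower semicontinuity of the total variation under $L^1$ convergence, and $u(\bar t_-)=u(\bar t_+)=\bar u$, since otherwise a thin downward spike of $u_\nu$ just after $\bar t$ could generate an undercompressive shock and spoil your uniform gap $c_0$; and the degenerate case $\bar u=V$, where $\rho^*_{\bar u}=0$ and both the lower density bound and $c_0$ degenerate, is only deferred to \cite{LiardPiccoli2}. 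Neither is a fatal objection, but these are exactly the points at which the present paper silently relies on the constant-control proof, so they are the ones worth spelling out.
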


  \begin{figure}[ht]
    \centering
    \includegraphics[width=0.45\linewidth]{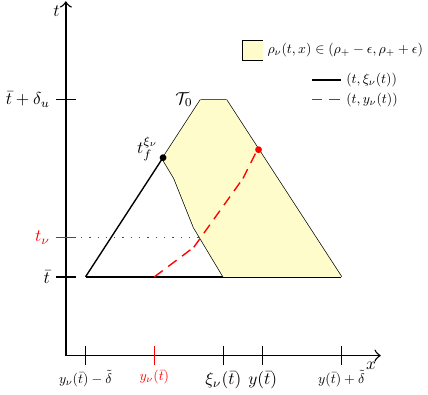}
    \caption{The situation of Lemma~\ref{oooo} in the case
      $\rho_{\bar u}^* \leq \rho_- <\rho_+ \leq R$ with
      $y_\nu(\bar t)< y(\bar t)$.
    }
    \label{lemma51}
  \end{figure}
 

  \begin{lemma}
    \label{rhon1}
    \textbf{\textup{\cite[Lemma 8]{LiardPiccoli2}}}.
    Let $\bar t \in \R^+ \setminus \mathcal{N}$ and $\epsilon>0$.
    Assume that $\rho_-,\rho_+\in [0,\rho^*_{\bar u}]$.
    Then there exists $\delta>0$ and  $\bar \nu \in \N$ such that
    \begin{equation*}
      \rho_\nu(\bar t,x)\in \,]0,\rho^*_{\bar u}+2\epsilon[
    \end{equation*}
    for every $\nu \ge \bar \nu$ and $x\in \,]\min\{y_\nu(\bar t),y(\bar t)\}
    -\delta,\max\{y_\nu(\bar t),y(\bar t)\}+ \delta[$.
\end{lemma}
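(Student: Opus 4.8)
The plan is to adapt the argument of \cite[Lemma~8]{LiardPiccoli2}, the only genuinely new ingredient being that the desired speed is now the piecewise constant $u_\nu$ rather than a constant. This is handled through the continuity of the maps $u\mapsto\check\rho_u,\hat\rho_u,\rho^*_u$ (Lemmas~\ref{lem:monotonicity} and~\ref{le:grid_sequence}): since $\bar t\notin\mathcal N$ gives $u_\nu(\bar t)\to\bar u$, one has $\check\rho_{u_\nu(\bar t)}\to\check\rho_{\bar u}$, $\hat\rho_{u_\nu(\bar t)}\to\hat\rho_{\bar u}$ and $\rho^*_{u_\nu(\bar t)}\to\rho^*_{\bar u}$, so every occurrence of $\rho^*_u,\check\rho_u,\hat\rho_u$ for the constant speed in \cite{LiardPiccoli2} is replaced by its $u_\nu(\bar t)$-counterpart at negligible cost.

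First I would dispose of the outer collar. Set $m_\nu=\min\{y(\bar t),y_\nu(\bar t)\}$, $M_\nu=\max\{y(\bar t),y_\nu(\bar t)\}$. Applying Lemma~\ref{suprhostar} with precision $\epsilon$ produces $\delta>0$ and $\bar\nu$ such that, for $\nu\geq\bar\nu$, $\rho_\nu(\bar t,\cdot)$ stays within $\epsilon$ of $\rho_-\leq\rho^*_{\bar u}$ on $]m_\nu-\delta,m_\nu[$ and within $\epsilon$ of $\rho_+\leq\rho^*_{\bar u}$ on $]M_\nu,M_\nu+\delta[$, hence $\rho_\nu(\bar t,x)<\rho^*_{\bar u}+\epsilon$ there. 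It remains to bound $\rho_\nu(\bar t,\cdot)$ on $[m_\nu,M_\nu]$, which collapses to $\{y(\bar t)\}$ as $\nu\to\infty$ by the uniform convergence $y_\nu\to y$; the statement is vacuous if $y_\nu(\bar t)=y(\bar t)$, and by symmetry I may assume $y_\nu(\bar t)<y(\bar t)$, so that the \emph{collapsing strip} $]y_\nu(\bar t),y(\bar t)[$ lies downstream of the $\nu$-bottleneck.

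It is then enough to (a) bound the right trace $\rho_\nu(\bar t,y_\nu(\bar t)_+)$ at the AV, and (b) control $\rho_\nu(\bar t,\cdot)$ across the strip, which at the generic time $\bar t$ (not a jump time of $u_\nu$, not a $\nu$-interaction time) carries only classical waves. For (a): by the constrained Riemann solver (Appendix~\ref{se:RP}), if an undercompressive shock sits at $y_\nu(\bar t)$ then $\rho_\nu(\bar t,y_\nu(\bar t)_+)=\check\rho_{u_\nu(\bar t)}<\rho^*_{u_\nu(\bar t)}$, and if the bottleneck is inactive with $\dot y_\nu(\bar t)=u_\nu(\bar t)$ then $\rho_\nu(\bar t,y_\nu(\bar t)_+)<v^{-1}(u_\nu(\bar t))=\rho^*_{u_\nu(\bar t)}$; in both cases $\rho_\nu(\bar t,y_\nu(\bar t)_+)<\rho^*_{\bar u}+\epsilon$ for $\nu$ large. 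The only remaining possibility is the \emph{congested regime} $\dot y_\nu(\bar t)=v(\rho_\nu(\bar t,y_\nu(\bar t)_+))$ with $\rho_\nu(\bar t,y_\nu(\bar t)_+)>\rho^*_{u_\nu(\bar t)}$, which I would rule out by contradiction: assuming $\rho_\nu(\bar t,y_\nu(\bar t)_+)\geq\rho^*_{\bar u}+2\epsilon$ along a subsequence, just downstream of the AV there is a rarefaction fan, decreasing from $\rho_\nu(\bar t,y_\nu(\bar t)_+)$ and already entered by the AV since $v(\cdot)>f'(\cdot)$ on $]0,R[$. If this fan was emitted at $t=0$ or at a jump of $u_\nu$ whose distance to $\bar t$ stays bounded below along the subsequence, then the portion of it carrying densities $\geq\rho^*_{\bar u}+\tfrac32\epsilon$ has spatial length bounded below by a positive constant (using $-f''\geq\beta$), so, the strip collapsing, it protrudes past $y(\bar t)$, contradicting $\rho_\nu(\bar t,\cdot)<\rho^*_{\bar u}+\epsilon$ just to the right of $y(\bar t)$; if instead that jump time tends to $\bar t$, then by Lemma~\ref{Y2} the fan connects $\hat\rho$ (or $\check\rho$) values at speeds tending to $\bar u$, so its densities stay near $\hat\rho_{\bar u}<\rho^*_{\bar u}$, again contradicting $\rho_\nu(\bar t,y_\nu(\bar t)_+)\geq\rho^*_{\bar u}+2\epsilon$. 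For (b): at a non-interaction time the wave pattern downstream of $y_\nu(\bar t)$ is, by elementary stability of wave interactions in a scalar law, at most one classical shock and then rarefaction fronts, so $\max_{]y_\nu(\bar t),y(\bar t)[}\rho_\nu(\bar t,\cdot)$ exceeds $\max\{\rho_\nu(\bar t,y_\nu(\bar t)_+),\rho_\nu(\bar t,y(\bar t)_-)\}$ by at most the total strength of the rarefaction fronts contained in the collapsing strip, which tends to $0$; combining with (a), with Lemma~\ref{suprhostar} for $\rho_\nu(\bar t,y(\bar t)_-)$ (up to an $\eps_\rho^\nu$ for a possible wave at $y(\bar t)$), and with the slack in $2\epsilon$, one gets $\rho_\nu(\bar t,x)<\rho^*_{\bar u}+2\epsilon$ throughout. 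The case $y_\nu(\bar t)>y(\bar t)$ is symmetric, with the left trace and $\hat\rho$ in place of the right trace and $\check\rho$.

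I expect the main obstacle to be precisely the exclusion of the congested regime at the AV: the entropy conditions already established for the limit $(\rho,y)$ are of no help here, since they tolerate a slow moving bottleneck even when the downstream density is sub-critical, so one is forced to use the finer geometric information — the vanishing width of the strip, the structure of rarefaction fans emitted along the AV trajectory (Lemma~\ref{Y2}), and the localization of $\rho_\nu$ near $y(\bar t)$ (Lemma~\ref{suprhostar}). Modulo this point the argument is that of \cite[Lemma~8]{LiardPiccoli2}; the lower bound in the stated inclusion is the trivial invariant-region bound $\rho_\nu\geq0$.
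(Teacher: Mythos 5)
You should first note that the paper itself offers no proof of this statement: it is quoted directly from \cite[Lemma 8]{LiardPiccoli2} (where it is proved for a constant desired speed), without even an adaptation remark of the kind given after Lemmas~\ref{sup} and~\ref{suprhostar2}. So there is no in-paper argument to compare against; your plan — outer collars via Lemma~\ref{suprhostar}, then control of the collapsing strip between $y_\nu(\bar t)$ and $y(\bar t)$, the genuinely new feature being the rarefaction fans emitted at the AV at jump times of $u_\nu$ — is the natural adaptation, and you correctly locate where the difficulty sits. But as written the argument has two genuine gaps.

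First, the structural claim in your step (b), that at a non-interaction time the pattern downstream of $y_\nu(\bar t)$ is ``at most one classical shock and then rarefaction fronts,'' is false: a front-tracking solution of a scalar law at a fixed time can contain arbitrarily many shocks and rarefaction fronts in any alternation (waves born at $t=0$ at different points, or at the AV at different jump times of $u_\nu$, need not have interacted by time $\bar t$). Hence the bound of the maximum over the strip by the endpoint traces plus the rarefaction strength inside the strip is unjustified, and so is the assertion that this strength tends to $0$: fans emitted at the AV at times accumulating at $\bar t$ can carry order-one strength inside the strip, and their harmlessness must come from the \emph{values} they carry (your point (a)), not from their smallness — so the two halves of your argument do not fit together. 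The configuration that actually has to be excluded is a thin plateau of density $\ge \rho^*_{\bar u}+2\epsilon$ in the interior of the strip, reached by an upward shock on its left and relaxed by rarefaction fronts on its right; your case analysis of the trace $\rho_\nu(\bar t,y_\nu(\bar t)_+)$ does not see it, and excluding it requires a quantitative spreading argument for the rarefaction fronts carrying values above $\rho^*_{\bar u}+\epsilon$ (fronts born before $\bar t-\eta$ are sparse in a set of vanishing width; fronts born after must have controlled values). Second, that last control rests on your assertion that jumps of $u_\nu$ at times $t_\nu\to\bar t$ have one-sided values tending to $\bar u$, so that the emitted densities stay near $\hat\rho_{\bar u},\check\rho_{\bar u}<\rho^*_{\bar u}$. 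This does not follow from $u_\nu\to u$ in $\LL1$ and $u_\nu(\bar t)\to\bar u$ alone: one needs the constraint $\tv(u_\nu)\le\tv(u)$ in \eqref{eq:uapprox} combined with lower semicontinuity of the total variation to localize the variation of $u_\nu$, i.e. $\limsup_\nu \tv\left(u_\nu; \,]\bar t-\eta,\bar t+\eta[\,\right)$ small for $\eta$ small because $u(\bar t_-)=u(\bar t_+)=\bar u$; without this, a spurious downward spike of fixed height in $u_\nu$ just before $\bar t$ (allowed by $\LL1$ convergence alone) would create fronts with values $\check\rho_{u'},\hat\rho_{u'}$ possibly exceeding $\rho^*_{\bar u}+2\epsilon$ and ruin the estimate. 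Until these two points are supplied, the proof is incomplete — they are precisely the content of the argument delegated to \cite[Lemma 8]{LiardPiccoli2}.
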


\begin{lemma} \label{ine}
\textbf{\textup{\cite[Lemma 9]{LiardPiccoli2}}}.
If $\min\{\rho_+,\rho_-\} <\rho^*_{\bar u}< \max\{\rho_+,\rho_-\}$, then it holds $\rho_-<\rho^*_{\bar u}<\rho_+$.
\end{lemma}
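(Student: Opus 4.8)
The plan is to argue by contradiction: assume $\rho_+ < \rho^*_{\bar u} < \rho_-$, so in particular $\rho_-\neq\rho_+$. I would first record that the limit density $\rho$ is a \emph{distributional} solution of $\pt\rho + \px f(\rho)=0$ on all of $\R^+\times\R$ with datum $\rho_0$: since $\rho$ satisfies \eqref{eq:EC} (shown in the proof of Theorem~\ref{final21}), the choices $\kappa = 0$ and $\kappa = R$ make the source term at the $y$-wave vanish (because $f(0)=f(R)=0$, $0\le\dot y\le V$ and $F_\alpha\ge 0$), and the two resulting inequalities are opposite. Hence the Rankine--Hugoniot relation holds across the $y$-wave for a.e.\ $t>0$; enlarging the null set $\mathcal N$ if needed, we may assume that at $\bar t$
\begin{equation*}
  \dot y(\bar t)\left(\rho_- - \rho_+\right) = f(\rho_-) - f(\rho_+).
\end{equation*}

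Next I would compute $\dot y(\bar t)$ from the approximate solutions, splitting according to the sign of $y_\nu(\bar t)-y(\bar t)$ and recalling that ${\dot y}_\nu = \min\{u_\nu, v(\rho_\nu(\cdot,y_\nu(\cdot)_+))\}$ by construction. If $y_\nu(\bar t)\ge y(\bar t)$ along a subsequence, Lemma~\ref{suprhostar} gives $\rho_\nu(\bar t,y_\nu(\bar t)_+)\to\rho_+<\rho^*_{\bar u}$, hence $\rho_\nu(\bar t,y_\nu(\bar t)_+)<\rho^*_{u_\nu(\bar t)}$ for $\nu$ large (using $u_\nu(\bar t)\to\bar u$ and continuity of $u\mapsto\rho^*_u$ from Lemma~\ref{lem:monotonicity}), so $v(\rho_\nu(\bar t,y_\nu(\bar t)_+))>u_\nu(\bar t)$ and $\dot y_\nu(\bar t)=u_\nu(\bar t)\to\bar u$; thus $\dot y(\bar t)=\bar u$. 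If instead $y_\nu(\bar t)\le y(\bar t)$ along a subsequence, Lemma~\ref{suprhostar} gives $\rho_\nu(\bar t,y_\nu(\bar t)_-)\to\rho_->\rho^*_{\bar u}$, so for $\nu$ large $\rho_\nu(\bar t,y_\nu(\bar t)_-)>\rho^*_{u_\nu(\bar t)}>\hat\rho_{u_\nu(\bar t)}\ge\check\rho_{u_\nu(\bar t)}$ by \eqref{eq:order-point-2}; since the left state of a $UC_{u_\nu(\bar t)}$-wave is $\hat\rho_{u_\nu(\bar t)}$ and that of a $C_{u_\nu(\bar t)}$-wave is $\le\check\rho_{u_\nu(\bar t)}$ (Lemma~\ref{Y5}), the AV must carry a fictitious $\F_{u_\nu(\bar t)}$-wave, whence $\rho_\nu(\bar t,y_\nu(\bar t)_+)=\rho_\nu(\bar t,y_\nu(\bar t)_-)>\rho^*_{u_\nu(\bar t)}$ and $\dot y_\nu(\bar t)=v(\rho_\nu(\bar t,y_\nu(\bar t)_-))\to v(\rho_-)$; thus $\dot y(\bar t)=v(\rho_-)$.

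Finally, $\rho_->\rho^*_{\bar u}$ and $\rho_+<\rho^*_{\bar u}$, with $v$ strictly decreasing and $v(\rho^*_{\bar u})=\bar u$, give $f(\rho_-)=\rho_-v(\rho_-)<\bar u\rho_-$ and $f(\rho_+)=\rho_+v(\rho_+)\ge\bar u\rho_+$, so the Rankine--Hugoniot identity yields $\dot y(\bar t)=\bigl(f(\rho_-)-f(\rho_+)\bigr)/(\rho_--\rho_+)<\bar u$. In the first case this contradicts $\dot y(\bar t)=\bar u$. In the second case, substituting $\dot y(\bar t)=v(\rho_-)$ into Rankine--Hugoniot forces $v(\rho_-)=v(\rho_+)$, hence $\rho_-=\rho_+$, again a contradiction (the borderline sub-case $\rho_+=0$, in which Rankine--Hugoniot is inconclusive, is excluded by \eqref{eq:EC}: a decreasing jump $(\rho_-,0)$ at $x=y(\bar t)$ can only be the undercompressive shock $(\hat\rho_{\dot y(\bar t)},\check\rho_{\dot y(\bar t)})=(\hat\rho_{v(\rho_-)},\check\rho_{v(\rho_-)})$, which requires $\check\rho_{v(\rho_-)}=0$, i.e.\ $v(\rho_-)=V>\bar u$, which is absurd). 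I expect the second case of the $\dot y(\bar t)$-computation to be the main obstacle: ruling out the non-fictitious AV configurations there is where the structure of the wave-front tracking scheme at the AV, the ordering \eqref{eq:order-point-2} and the convergences $u_\nu(\bar t)\to\bar u$, $\rho^*_{u_\nu(\bar t)}\to\rho^*_{\bar u}$ all come into play, as in \cite[Lemma~9]{LiardPiccoli2}.
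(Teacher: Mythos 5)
The paper does not actually prove this lemma: it is imported verbatim from \cite[Lemma 9]{LiardPiccoli2}, so there is no in-paper argument to compare with and your proof must stand on its own. In substance it does. The derivation of the Rankine--Hugoniot relation along $y$ (taking $\kappa=0$ and $\kappa=R$ in \eqref{eq:EC}, which is established in the proof of Theorem~\ref{final21} before Lemma~\ref{ine} is invoked, so there is no circularity, and then applying the Gauss--Green identities already used there via \cite{ChenFrid1999}), the dichotomy $y_\nu(\bar t)\ge y(\bar t)$ versus $y_\nu(\bar t)\le y(\bar t)$ combined with Lemma~\ref{suprhostar} and the Lipschitz continuity of $u\mapsto\rho^*_u$ from Lemma~\ref{lem:monotonicity} to identify $\dot y(\bar t)$ as $\bar u$ or as $v(\rho_-)$, and the two resulting contradictions with the Rankine--Hugoniot speed are all correct.

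Two steps deserve to be made explicit. First, in the case $y_\nu(\bar t)\le y(\bar t)$ you rule out the $UC$ and $C$ configurations and conclude the AV carries an $\F$-wave; this classification is the one the paper itself uses in Proposition~\ref{prop:upsilon} (and the inequality $\rho_l\le\check\rho_{u}$ for a $C_u$-wave is the assertion in Lemma~\ref{Y5}), but it is exhaustive only away from the finitely many interaction times of each $\rho_\nu$, so the null set $\mathcal N$ must also absorb the countable union of these times; this is harmless because the lemma is only applied for a.e.\ $\bar t$, but it should be stated. Second, the exclusion of the borderline case $\rho_+=0$ rests on the claim that a decreasing jump along $y$ compatible with \eqref{eq:EC} must be exactly $(\hat\rho_{\dot y},\check\rho_{\dot y})$; this is true and is precisely what the remark following Definition~\ref{def:sol-CP} asserts, but in a self-contained proof you should add the standard localization of \eqref{eq:EC} along $y(\cdot)$ with $\kappa$ strictly between the traces, which together with point~\ref{def:sol-CP:5} of Definition~\ref{def:sol-CP} forces $f(\rho_\pm)-\dot y(\bar t)\rho_\pm=F_\alpha(\dot y(\bar t))$; with $\rho_+=0$ this gives $F_\alpha(\dot y(\bar t))=0$, hence $\dot y(\bar t)=V$, contradicting $\dot y(\bar t)=v(\rho_-)<\bar u\le V$, exactly as you indicate. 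With these two points spelled out, your argument is a correct, self-contained substitute for the citation.
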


 \begin{lemma}
    \label{ooooa} 
    \textbf{\textup{\cite[Lemma 10]{LiardPiccoli2}}}.
    Let $\bar t \in \R^+ \setminus \mathcal{N}$ and $\epsilon>0$.
    Assume that $\rho_-<\rho^*_{\bar u}<\rho_+$
    and $y_\nu(\bar t) < y(\bar t)$ (up to a subsequence)
    for every $\nu \in \N$.
    Then there exist a domain $\mathcal T_1$ and, for every $\nu \in \N$,
    a piecewise constant function $\xi^1_\nu(\cdot)$ and a time
    $t_f^{\xi_\nu^1} > \bar t$ such that $(t,\xi^1_\nu(t))\in \mathcal{T}_1$
    for every $t\in [\bar t,t_f^{\xi^1_\nu}[$, 
    and
    $\rho_\nu(t,x+)\in\, ]\rho_+-\epsilon,\rho_++\epsilon[$
    for every $(t,x)\in \left([\bar t,+\infty[\, \times  \,]\xi^1_\nu(t),
      +\infty[\right)\cap \mathcal{T}_1$.
      Moreover, there exist
      $c>0$ independent of $\nu$ and  $t_\nu\in[\bar t,\bar t+c[$ such that $y_\nu(t_\nu)=\xi^1_\nu(t_\nu)$ and $\lim_{\nu\to \infty} t_\nu=\bar t$.
  \end{lemma}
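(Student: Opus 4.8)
The plan is to mimic the construction of Lemma~\ref{oooo}, replacing its ``both congested'' geometry by the transition geometry $\rho_- < \rho^*_{\bar u} < \rho_+$. I would start by recording the two elementary speed inequalities that drive everything. Since $\rho_+ > \rho^*_{\bar u}$ and $v$ is strictly decreasing with $v(\rho^*_{\bar u}) = \bar u$ (see~\eqref{eq:rho_u}), we have $v(\rho_+) < \bar u$; thus, once the AV has right trace $\rho_+$, its admissible speed is $\min\{\bar u, v(\rho_+)\} = v(\rho_+)$. On the other hand, the limit density jumps from $\rho_-$ up to $\rho_+$ at $y(\bar t)$, i.e. it carries a classical shock of speed $s = \bigl(f(\rho_+)-f(\rho_-)\bigr)/(\rho_+-\rho_-)$; writing $f(\rho)=\rho v(\rho)$, the inequality $s < v(\rho_+)=f(\rho_+)/\rho_+$ is equivalent to $v(\rho_-) > v(\rho_+)$, which holds because $\rho_- < \rho_+$. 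Hence $s < v(\rho_+) < \bar u$, and both gaps $v(\rho_+)-s$ and $\bar u - v(\rho_+)$ are fixed positive numbers depending only on $\bar t$.

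Next I would localize. Applying Lemma~\ref{suprhostar} with the given $\epsilon$ produces $0 < \tilde\delta < \delta$ such that, for all large $\nu$ and all $x \in\, ]y(\bar t), y(\bar t)+\tilde\delta[$ (recall $y_\nu(\bar t) < y(\bar t)$, so the right neighbourhood is anchored at $y(\bar t)$), one has $\rho_\nu(\bar t, x) \in\, ]\rho_+-\epsilon, \rho_++\epsilon[$. I would then take $\xi^1_\nu$ to be the wave-front tracking shock of $\rho_\nu$ separating the near-$\rho_-$ state from this near-$\rho_+$ band, i.e. the left boundary of the band (piecewise affine between interactions), started at the point $\xi^1_\nu(\bar t) \in\, [y(\bar t), y(\bar t)+\tilde\delta[$ where the band begins. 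Since $\rho_\nu(\bar t,\cdot)\to\rho(\bar t,\cdot)$ and $\rho$ has right trace $\rho_+$ at $y(\bar t)$, this starting point converges to $y(\bar t)$, so $\xi^1_\nu(\bar t)\to y(\bar t)$ and $y_\nu(\bar t)\to y(\bar t)$ and the initial gap $\xi^1_\nu(\bar t)-y_\nu(\bar t)$ tends to $0$. Finally I would choose $\mathcal T_1$ to be a space--time triangle with base $\{\bar t\}\times\,]y(\bar t), y(\bar t)+\tilde\delta[$ and lateral sides the extremal characteristics of slopes $\pm\max_{[0,R]}|f'|$; by finite speed of propagation for the scheme $f_\nu$, the value of $\rho_\nu$ at any $(t,x)\in\mathcal T_1$ depends only on base data lying in the band, so $\rho_\nu(t, x+)\in\, ]\rho_+-\epsilon,\rho_++\epsilon[$ for every $x > \xi^1_\nu(t)$ with $(t,x)\in\mathcal T_1$, up to the closing time $t_f^{\xi^1_\nu}$.

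It then remains to show the AV overtakes $\xi^1_\nu$ quickly. While $y_\nu(t) \le \xi^1_\nu(t)$ inside $\mathcal T_1$, the state immediately to the right of the AV is the free value $\approx \rho_- < \rho^*_{\bar u}$, whence $v(\rho_\nu(t,y_\nu(t)_+)) > \bar u$ for $\epsilon$ small, so that $\dot y_\nu(t) = \min\{u_\nu(t), v(\rho_\nu(t,y_\nu(t)_+))\} = u_\nu(t)$. Since $u_\nu(t)\to\bar u > s = \lim \dot\xi^1_\nu$, for all large $\nu$ the closing speed satisfies $\dot y_\nu(t)-\dot\xi^1_\nu(t) \ge (\bar u - s)/2 > 0$ uniformly in $\nu$. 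Therefore the gap $\xi^1_\nu(t)-y_\nu(t)$ decreases at rate at least $(\bar u - s)/2$ and vanishes at some $t_\nu$ with $t_\nu - \bar t \le 2\bigl(\xi^1_\nu(\bar t)-y_\nu(\bar t)\bigr)/(\bar u - s)$. This bounds $t_\nu$ by $\bar t + c$ with $c = 4\tilde\delta/(\bar u - s)$ independent of $\nu$, and since the initial gap tends to $0$ it also forces $t_\nu\to\bar t$ (and $t_\nu < t_f^{\xi^1_\nu}$ for large $\nu$, so the overtaking happens inside $\mathcal T_1$).

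The main obstacle I expect is the middle step: justifying, uniformly in $\nu$ and with the correct $\epsilon$-bookkeeping, that $\xi^1_\nu$ is a single front with near-$\rho_-$ state on its left, so that the AV's right trace stays genuinely free (density $< \rho^*_{\bar u}$) and $\dot y_\nu = u_\nu$ holds up to the catch-up time. This requires reading off the structure of the constrained Riemann solver of Appendix~\ref{se:RP} at the AV, in particular checking that the right trace $\rho_+$ makes the flux constraint~\eqref{eq:constraint} trivially satisfied (its left-hand side being $f(\rho_+)-v(\rho_+)\rho_+=0$), so that no spurious undercompressive wave is inserted between the AV and the band, together with the wave-speed monotonicity of Lemma~\ref{lem:monotonicity}. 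Everything else is the finite-speed-of-propagation control already employed for Lemma~\ref{oooo}.
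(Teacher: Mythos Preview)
The paper itself does not supply a proof of this lemma: it is stated in the appendix with a bare citation to \cite[Lemma~10]{LiardPiccoli2}, so there is no in--paper argument to compare against. Your outline follows the natural template of Lemma~\ref{oooo} and the key speed hierarchy $s<v(\rho_+)<\bar u$ is correctly derived (the first inequality from concavity, the second from $\rho_+>\rho^*_{\bar u}$ and~\eqref{eq:rho_u}); this is indeed the geometric fact that replaces the ``both congested'' configuration of Lemma~\ref{oooo}.

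That said, there is a genuine gap in the catch--up step, and it is precisely the one you flag as the ``main obstacle'': you assert that while $y_\nu(t)<\xi^1_\nu(t)$ the right trace of the AV is $\approx\rho_-$, hence $\dot y_\nu=u_\nu$. Lemma~\ref{suprhostar} gives you control of $\rho_\nu(\bar t,\cdot)$ only on $]y_\nu(\bar t)-\tilde\delta,y_\nu(\bar t)[$ and on $]y(\bar t),y(\bar t)+\tilde\delta[$; it says nothing about the interval $]y_\nu(\bar t),y(\bar t)[$, whose length tends to~$0$ but on which $\rho_\nu$ is, a priori, arbitrary (the $\LL1$ convergence is useless on a set of vanishing measure, and the uniform BV bound alone does not force the values). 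In particular the AV could carry an undercompressive shock, or face a spike of large density, so that $\dot y_\nu\neq u_\nu$ on a nontrivial time set; your appeal to the Riemann solver at the AV does not settle this, because the data to its right in that gap are unknown. Relatedly, note a small slip: since the near--$\rho_+$ band contains $]y(\bar t),y(\bar t)+\tilde\delta[$, its left boundary satisfies $\xi^1_\nu(\bar t)\le y(\bar t)$, not $\xi^1_\nu(\bar t)\in[y(\bar t),y(\bar t)+\tilde\delta[$.

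A way to repair the argument, closer to what \cite{LiardPiccoli2} does, is to avoid claiming anything about the right trace of the AV and instead compare speeds at the level of $\xi^1_\nu$ only: any wave front whose \emph{right} state lies in $]\rho_+-\epsilon,\rho_++\epsilon[$ travels with speed at most $v(\rho_+-\epsilon)$ (for a shock with left state $\rho^L\ge 0$ one has $(f(\rho^R)-f(\rho^L))/(\rho^R-\rho^L)\le f(\rho^R)/\rho^R=v(\rho^R)$ by concavity; for a rarefaction the speed is $\le f'(\rho^R)<v(\rho^R)$). On the other hand, whatever the AV's right trace $\rho^R$ is, as long as $\rho^R\notin\,]\rho_+-\epsilon,\rho_++\epsilon[$ one shows $\dot y_\nu=\min\{u_\nu,v(\rho^R)\}$ exceeds the speed of the front immediately to its right by a quantity bounded below in terms of the mesh; then a finite induction on the (boundedly many) fronts trapped between $y_\nu$ and $\xi^1_\nu$, together with $\xi^1_\nu(\bar t)-y_\nu(\bar t)\le y(\bar t)-y_\nu(\bar t)\to 0$, yields $t_\nu\to\bar t$. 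Filling in this mechanism is the actual content that your sketch still owes.
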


\section{The Riemann problem at the vehicle location}
\label{se:RP}


We recall here the definition of the solution to the Riemann problem, i.e. problem~\eqref{eq:control-model}
with initial data
\begin{equation}
  \label{eq:RP}
  y_0=0 \qquad\hbox{and}\qquad
  \rho_0(x) = \begin{cases}
        \rho_l & \textrm{ if } x < 0,
        \\
        \rho_r & \textrm{ if } x >  0.
\end{cases} 
\end{equation}
Denote by $\mathcal{R}$ the standard (i.e., without the constraint
\eqref{eq:constraint}) Riemann solver for
\eqref{eq:LWR}-\eqref{eq:RP}, i.e., the (right continuous) map
$(t,x)\mapsto \mathcal{R}(\rho_L,\rho_R)({x}/{t})$ given by the
standard weak entropy solution, see for instance~\cite{Lefloch_book}.
Moreover, given $u \in [0, V]$, let $\check{\rho}_u$ and $\hat{\rho}_u$, with
$\check{\rho}_u \leq \hat{\rho}_u$,
be the points defined in~\eqref{eq:check-hat}. \\
Following~\cite[Definition 3.1]{MR3264413}, the constrained Riemann
solver is defined as follows:
\begin{definition}
  \label{def:RS}
  The constrained Riemann solver
  $\mathcal{R}^u : [0,R]^2 \to \Ll1(\R;[0,R])$ is defined as
  follows.
  \begin{enumerate}
  \item If
    $f(\mathcal{R}(\rho_l,\rho_r)(u))> F_\alpha+u
    \,\mathcal{R}(\rho_l,\rho_r)(u)$, then
    \begin{equation*}
      \mathcal{R}^u(\rho_l,\rho_r)(x/t)= \left\{
        \begin{array}{ll}
          \mathcal{R}(\rho_l,\hat{\rho}_u)(x/t) 
          &\hbox{if}~x < u t,
            \vspace{.2cm}\\
          \mathcal{R}(\check{\rho}_u,\rho_r)(x/t) &\hbox{if}~x\geq u t,
        \end{array}
      \right.  \quad\hbox{and}\quad y(t)=u t.
    \end{equation*}
    
  \item If
    $f(\mathcal{R}(\rho_l,\rho_r)(u))\leq
    F_\alpha+u\,\mathcal{R}(\rho_L,\rho_R)(u)$, then
    \begin{equation*}
      \mathcal{R}^u (\rho_l,\rho_r)=\mathcal{R}(\rho_l,\rho_r)
      \quad\hbox{and}\quad y(t)=\omega(\rho_r;u)\, t.
    \end{equation*}
  \end{enumerate}
\end{definition}

In the next subsections we detail the structure of solutions in some cases.
%
%
\subsection{Case of a \emph{shock wave} in $\rho$: $\rho_l < \rho_r$.}
\label{se:rp:shock}
We suppose that $0 \le \rho_l < \rho_r \le R$.
Denote by $\sigma$ the speed
of the shock wave, i.e.
\begin{equation*}
  \sigma = \frac{f\left(\rho_l\right) - f\left(\rho_r\right)}{\rho_l - \rho_r}.
\end{equation*}
First assume $\sigma < u$.
The following possibilities hold.
\begin{enumerate}
\item $f\left(\rho_r\right) > \fhi_u\left(\rho_r\right)$.
  This hypothesis implies that $\check \rho_u < \rho_r < \hat \rho_u$, while
  $\sigma < u$ implies that the vehicle at $y$ enters in the region with
  density $\rho_r$. Moreover $\sigma < u$  and $\rho_l < \rho_r$
  imply that $\check \rho_u < \rho_l < \rho_r < \hat \rho_u$.
  Therefore the solution is given by
  \begin{equation*}
    \rho(t,x) = \left\{
      \begin{array}{ll}
        \rho_l 
        & \textrm{ if } x < \frac{f\left(\rho_l\right) 
          - f\left(\hat \rho_u\right)}{\rho_l - \hat \rho_u} t
        \vspace{.2cm}\\
        \hat \rho_u
        & \textrm{ if } \frac{f\left(\rho_l\right) 
          - f\left(\hat \rho_u\right)}{\rho_l - \hat \rho_u} t < x
          < ut
        \vspace{.2cm}\\
        \check \rho_u
        & \textrm{ if } ut < x < \frac{f\left(\rho_r\right) 
          - f\left(\check \rho_u\right)}{\rho_r - \check \rho_u} t
        \vspace{.2cm}\\
        \rho_r
        & \textrm{ if } x > \frac{f\left(\rho_r\right) 
          - f\left(\check \rho_u\right)}{\rho_r - \check \rho_u} t
      \end{array}
    \right.
  \end{equation*}
  and
  \begin{equation*}
    y(t) = y(0) + u t;
  \end{equation*}
  see Figure~\ref{fig:case-shock-a}.
  Finally note that $u \le v\left(\rho_l\right) \le v \left(\rho_r\right)$,
  so that the constraint in~(\ref{eq:RP}) for $u$ is satisfied.

\item $u \rho_r \le f\left(\rho_r\right) \le \fhi_u\left(\rho_r\right)$.
  This hypothesis implies that either $0 \le \rho_r \le \check \rho_u$
  or $\hat \rho_u < \rho_r < \rho_u^*$, while
  $\sigma < u$ implies that the vehicle at $y$ enters in the region with
  density $\rho_r$.
  If $0 \le \rho_r \le \check \rho_u$, then the assumptions 
  $\rho_l < \rho_r$ and $\sigma < u$ give a contradiction. Thus we deduce
  that $\hat \rho_u < \rho_r < \rho_u^*$.
  In this case the solution is given by
  \begin{equation*}
    \rho(t,x) = \left\{
      \begin{array}{ll}
        \rho_l 
        & \textrm{ if } x < \frac{f\left(\rho_l\right) 
          - f\left(\rho_r\right)}{\rho_l - \rho_r} t
        \vspace{.2cm}\\
        \rho_r
        & \textrm{ if } x > \frac{f\left(\rho_l\right) 
          - f\left(\rho_r\right)}{\rho_l - \rho_r} t
      \end{array}
    \right.
  \end{equation*}
  and
  \begin{equation*}
    y(t) = y(0) + u t;
  \end{equation*}
  see Figure~\ref{fig:case-shock-b}.
  Finally note that $u \le v\left(\rho_r\right)$,
  so that the constraint in~(\ref{eq:RP}) for $u$ is satisfied.

\item $f\left(\rho_r\right) < u \rho_r$.
  This hypothesis implies that $\rho_r > \rho_u^*$, while
  $\sigma < u$ implies that the vehicle at $y$ enters in the region with
  density $\rho_r$.
  In this case the solution is given by
  \begin{equation*}
    \rho(t,x) = \left\{
      \begin{array}{ll}
        \rho_l 
        & \textrm{ if } x < \frac{f\left(\rho_l\right) 
          - f\left(\rho_r\right)}{\rho_l - \rho_r} t
        \vspace{.2cm}\\
        \rho_r
        & \textrm{ if } x > \frac{f\left(\rho_l\right) 
          - f\left(\rho_r\right)}{\rho_l - \rho_r} t
      \end{array}
    \right.
  \end{equation*}
  and
  \begin{equation*}
    y(t) = y(0) + v(\rho_r) t,
  \end{equation*}
  see Figure~\ref{fig:case-shock-c}.
  Note that $u > v\left(\rho_l\right)$, so that the constraint
  in~\eqref{eq:RP}  is satisfied. The effective control
  is $u_e = v(\rho_r)$.
\end{enumerate}

\begin{figure}
  \centering
    \begin{tikzpicture}[line cap=round,line join=round,x=1.cm,y=0.7cm]

      \draw[<->] (0.2, 6.) -- (0.2, 1.) -- (5, 1.);

      \draw (0.2, 1.) to [out = 80, in = 180] (2.35, 4.5)
      to [out = 0, in = 100] (4.5, 1.);

      \draw (0.2, 1.) to [out = 80, in = 180] (1.77, 3.5)
      to [out = 0, in = 100] (3.5, 1.);

      \draw[very thick] (0.2, 1.) -- (0.2 + 4.5, 1. + 1.7);
      \draw[very thick] (0.2, 3) -- (0.2 + 4.5, 3. + 1.7);

      \draw[dashed] (4.2, 2.5) -- (4.2, 1.);
      \draw[dashed] (3.25, 4.17) -- (3.25, 1.);
      \draw[dashed] (0.78, 3.2) -- (0.78, 1.);

      \node[anchor=north,inner sep=0] at (0.2,0.9) {$0$};
      \node[anchor=north,inner sep=0] at (4.,0.9) {$\rho_u^*$};
      \node[anchor=north,inner sep=0] at (3.25,0.9) {$\hat\rho_u$};
      \node[anchor=north,inner sep=0] at (0.78,0.9) {$\check\rho_u$};
      \node[anchor=north,inner sep=0] at (5,0.9) {$\rho$};
      \node[anchor=east,inner sep=0] at (0.2,6) {$f$};
      \node[anchor=north,inner sep=0] at (4.5,0.9) {\small $R$};
      \node[anchor=south,inner sep=0] at (3.1,4.4) {$\rho_r$};
      \node[anchor=south,inner sep=0] at (1.1,4.) {$\rho_l$};

      \node[anchor=south east,inner sep=0] at (4.8,4.8) {$\fhi_u$};
      \node[anchor=south east,inner sep=0] at (4.8,2.8) {$u$};

      \draw[fill] (3., 4.31) circle (0.06);
      \draw[fill] (1.3, 4.) circle (0.06);
      \draw[very thick, dashed] (1.3, 4.) -- (3., 4.31);

      \draw[->] (6,1) -- (11, 1);
      \draw[->] (8.5, 1) -- (8.5, 6);

      \draw[color=black] (8.5, 1) -- (8.8, 6);
      \draw[very thick] (8.5, 1) -- (9.6, 6);
      \draw[color=black] (8.5, 1) -- (10.5, 6);

      \node[anchor=south,inner sep=0] at (8,4.) {$\rho_l$};
      \node[anchor=south,inner sep=0] at (10.2,4.) {$\rho_r$};

      \node[anchor=south,inner sep=0] at (9.,5.) {$\hat \rho_u$};
      \node[anchor=south,inner sep=0] at (9.8,5.) {$\check \rho_u$};

      \node[anchor=north,inner sep=0] at (11.,0.9) {$x$};
      \node[anchor=east,inner sep=0] at (8.4,6) {$t$};
      \node[anchor=north,inner sep=0] at (8.5,0.9) {$0$};

    \end{tikzpicture}    
  \caption{The Riemann problem. 
    The situation of a shock with speed less than $u$ and
    $f\left(\rho_r\right) > \fhi_u(\rho_r)$. In this case $y(t) = y(0) + ut$,
    while the solution for $\rho$ is composed by two classical shocks and
    one non classical shock.}
  \label{fig:case-shock-a}
\end{figure}

\begin{figure}
  \centering
    \begin{tikzpicture}[line cap=round,line join=round,x=1.cm,y=1cm]

      \draw[<->] (0.2, 5.) -- (0.2, 1.) -- (5, 1.);

      \draw (0.2, 1.) to [out = 80, in = 180] (2.35, 4.)
      to [out = 0, in = 100] (4.5, 1.);

      \draw (0.2, 1.) to [out = 80, in = 180] (1.77, 3.)
      to [out = 0, in = 100] (3.5, 1.);

      \draw[very thick] (0.2, 1.) -- (0.2 + 4.5, 1. + 1.7);
      \draw[very thick] (0.2, 2.5) -- (0.2 + 4.5, 2.5 + 1.7);

      \draw[dashed] (4.12, 2.45) -- (4.12, 1.);
      \draw[dashed] (3.33, 3.7) -- (3.33, 1.);
      \draw[dashed] (0.66, 2.65) -- (0.66, 1.);

      \node[anchor=north,inner sep=0] at (0.2,0.9) {$0$};
      \node[anchor=north,inner sep=0] at (4.,0.9) {$\rho_u^*$};
      \node[anchor=north,inner sep=0] at (3.33,0.9) {$\hat\rho_u$};
      \node[anchor=north,inner sep=0] at (0.66,0.9) {$\check\rho_u$};
      \node[anchor=north,inner sep=0] at (5,0.9) {$\rho$};
      \node[anchor=east,inner sep=0] at (0.2,5) {$f$};
      \node[anchor=north,inner sep=0] at (4.5,0.9) {\small $R$};
      \node[anchor=south,inner sep=0] at (4.1,3.) {$\rho_r$};
      \node[anchor=south,inner sep=0] at (0.8,3.3) {$\rho_l$};

      \node[anchor=south east,inner sep=0] at (4.8,4.3) {$\fhi_u$};
      \node[anchor=south east,inner sep=0] at (4.8,2.8) {$u$};

      \draw[ fill] (4., 2.8) circle (0.06);
      \draw[fill] (1., 3.3) circle (0.06);
      \draw[very thick, dashed] (1., 3.3) -- (4., 2.8);

      \draw[->] (6,1) -- (11, 1);
      \draw[->] (8.5, 1) -- (8.5, 5);

      \draw[color=black] (8.5, 1) -- (7.3, 5);
      \draw[very thick, dashed] (8.5, 1) -- (9.6, 5);

      \node[anchor=south,inner sep=0] at (7,3.) {$\rho_l$};
      \node[anchor=south,inner sep=0] at (10,4.) {$\rho_r$};
      \node[anchor=south,inner sep=0] at (9,4.) {$\rho_r$};


      \node[anchor=north,inner sep=0] at (11.,0.9) {$x$};
      \node[anchor=east,inner sep=0] at (8.4,5) {$t$};
      \node[anchor=north,inner sep=0] at (8.5,0.9) {$0$};

    \end{tikzpicture}    
  \caption{The Riemann problem. 
    The situation of a shock with speed less than $u$ and
    $u \rho_r \le f\left(\rho_r\right) \le \fhi_u\left(\rho_r\right)$.
    In this case $y(t) = y(0) + ut$,
    while the solution for $\rho$ is composed by the classical shock
    connecting $\rho_l$ to $\rho_r$.}
  \label{fig:case-shock-b}
\end{figure}

\begin{figure}
  \centering
    \begin{tikzpicture}[line cap=round,line join=round,x=1.cm,y=1cm]

      \draw[<->] (0.2, 5.) -- (0.2, 1.) -- (5, 1.);

      \draw (0.2, 1.) to [out = 80, in = 180] (2.35, 4.)
      to [out = 0, in = 100] (4.5, 1.);

      \draw (0.2, 1.) to [out = 80, in = 180] (1.77, 3.)
      to [out = 0, in = 100] (3.5, 1.);

      \draw[very thick] (0.2, 1.) -- (0.2 + 4.5, 1. + 1.7);
      \draw[very thick] (0.2, 2.5) -- (0.2 + 4.5, 2.5 + 1.7);

      \draw[dashed] (4.12, 2.45) -- (4.12, 1.);
      \draw[dashed] (3.33, 3.7) -- (3.33, 1.);
      \draw[dashed] (0.66, 2.65) -- (0.66, 1.);

      \node[anchor=north,inner sep=0] at (0.2,0.9) {$0$};
      \node[anchor=north,inner sep=0] at (4.,0.9) {$\rho_u^*$};
      \node[anchor=north,inner sep=0] at (3.33,0.9) {$\hat\rho_u$};
      \node[anchor=north,inner sep=0] at (0.66,0.9) {$\check\rho_u$};
      \node[anchor=north,inner sep=0] at (5,0.9) {$\rho$};
      \node[anchor=east,inner sep=0] at (0.2,5) {$f$};
      \node[anchor=north,inner sep=0] at (4.5,0.9) {\small $R$};
      \node[anchor=south,inner sep=0] at (4.6,2.) {$\rho_r$};
      \node[anchor=south,inner sep=0] at (0.8,3.3) {$\rho_l$};

      \node[anchor=south east,inner sep=0] at (4.8,4.3) {$\fhi_u$};
      \node[anchor=south east,inner sep=0] at (4.8,2.8) {$u$};

      \draw[fill] (4.3, 2.) circle (0.06);
      \draw[fill] (1., 3.3) circle (0.06);
      \draw[very thick, dashed] (1., 3.3) -- (4.3, 2.);
      \draw[very thick, dotted] (0.2, 1) -- (4.3, 2.); 

      \draw[->] (6,1) -- (11, 1);
      \draw[->] (8.5, 1) -- (8.5, 5);

      \draw[color=black] (8.5, 1) -- (7.3, 5);
      \draw[dashed] (8.5, 1) -- (9.6, 5);
      \draw[very thick, dashed] (8.5, 1) -- (9.1, 5);

      \node[anchor=south,inner sep=0] at (7,3.) {$\rho_l$};
      \node[anchor=south,inner sep=0] at (10,4.) {$\rho_r$};
      \node[anchor=south,inner sep=0] at (8.,4.) {$\rho_r$};


      \node[anchor=north,inner sep=0] at (11.,0.9) {$x$};
      \node[anchor=east,inner sep=0] at (8.4,5) {$t$};
      \node[anchor=north,inner sep=0] at (8.5,0.9) {$0$};
      \node[anchor=north west,inner sep=0] at (9.1,5) {$u_{e}$};

    \end{tikzpicture}    
  \caption{The Riemann problem. 
    The situation of a shock with speed less than $u$ and
    $f\left(\rho_r\right) < u \rho_r$.
    In this case $y(t) = y(0) + u_et$ with $u_e = v\left(\rho_r\right)$,
    while the solution for $\rho$ is composed by the classical shock
    connecting $\rho_l$ to $\rho_r$.}
  \label{fig:case-shock-c}
\end{figure}

Now suppose $\sigma > u$.
The following possibilities hold.
\begin{enumerate}
\item $f\left(\rho_l\right) > \fhi_u\left(\rho_l\right)$.
  This hypothesis implies that $\check \rho_u < \rho_l < \hat \rho_u$, while
  $\sigma > u$ implies that the vehicle at $y$ enters in the region with
  density $\rho_l$. Moreover $\sigma > u$  and $\rho_l < \rho_r$
  imply that $\check \rho_u < \rho_l < \rho_r < \hat \rho_u$.
  In this case the solution is given by
  \begin{equation*}
    \rho(t,x) = \left\{
      \begin{array}{ll}
        \rho_l 
        & \textrm{ if } x < \frac{f\left(\rho_l\right) 
          - f\left(\hat \rho_u\right)}{\rho_l - \hat \rho_u} t
        \vspace{.2cm}\\
        \hat \rho_u
        & \textrm{ if } \frac{f\left(\rho_l\right) 
          - f\left(\hat \rho_u\right)}{\rho_l - \hat \rho_u} t < x
          < ut
        \vspace{.2cm}\\
        \check \rho_u
        & \textrm{ if } ut < x < \frac{f\left(\rho_r\right) 
          - f\left(\check \rho_u\right)}{\rho_r - \check \rho_u} t
        \vspace{.2cm}\\
        \rho_r
        & \textrm{ if } x > \frac{f\left(\rho_r\right) 
          - f\left(\check \rho_u\right)}{\rho_r - \check \rho_u} t
      \end{array}
    \right.
  \end{equation*}
  and
  \begin{equation*}
    y(t) = y(0) + u t;
  \end{equation*}
  see Figure~\ref{fig:case-shock-II-a}.
  Finally note that $u \le v\left(\rho_l\right)$,
  so that the constraint in~(\ref{eq:RP}) for $u$ is satisfied.

\item $u \rho_l \le f\left(\rho_l\right) \le \fhi_u\left(\rho_l\right)$.
  This hypothesis implies that either $0 \le \rho_l \le \check \rho_u$
  or $\hat \rho_u \le \rho_l \le \rho_u^*$, while
  $\sigma > u$ implies that the vehicle at $y$ enters in the region with
  density $\rho_l$.
  If $\hat \rho_u \le \rho_l \le \rho_u^*$, then the assumptions 
  $\rho_l < \rho_r$ and $\sigma > u$ produce a contradiction. Thus we deduce
  that $0 \le \rho_l \le \check \rho_u$.
  In this case the solution is given by
  \begin{equation*}
    \rho(t,x) = \left\{
      \begin{array}{ll}
        \rho_l 
        & \textrm{ if } x < \frac{f\left(\rho_l\right) 
          - f\left(\rho_r\right)}{\rho_l - \rho_r} t
        \vspace{.2cm}\\
        \rho_r
        & \textrm{ if } x > \frac{f\left(\rho_l\right) 
          - f\left(\rho_r\right)}{\rho_l - \rho_r} t
      \end{array}
    \right.
  \end{equation*}
  and
  \begin{equation*}
    y(t) = y(0) + u t;
  \end{equation*}
  see Figure~\ref{fig:case-shock-II-b}.
  Finally note that $u \le v\left(\rho_l\right)$,
  so that the constraint in~(\ref{eq:RP}) for $u$ is satisfied.

\item $f\left(\rho_l\right) < u \rho_l$.
  This hypothesis implies that $\rho_l > \rho_u^*$, while
  $\sigma > u$ implies that the vehicle at $y$ enters in the region with
  density $\rho_l$.
  The fact that $\rho_l < \rho_r$ is in contradiction with 
  $\sigma > u$, so that this case does not happen.
\end{enumerate}

\begin{figure}
  \centering
    \begin{tikzpicture}[line cap=round,line join=round,x=1.cm,y=0.7cm]

      \draw[<->] (0.2, 6.) -- (0.2, 1.) -- (5, 1.);

      \draw (0.2, 1.) to [out = 80, in = 180] (2.35, 4.5)
      to [out = 0, in = 100] (4.5, 1.);

      \draw (0.2, 1.) to [out = 80, in = 180] (1.77, 3.5)
      to [out = 0, in = 100] (3.5, 1.);

      \draw[very thick] (0.2, 1.) -- (0.2 + 4.5, 1. + 1.7);
      \draw[very thick] (0.2, 3) -- (0.2 + 4.5, 3. + 1.7);

      \draw[dashed] (4.2, 2.5) -- (4.2, 1.);
      \draw[dashed] (3.25, 4.17) -- (3.25, 1.);
      \draw[dashed] (0.78, 3.2) -- (0.78, 1.);

      \node[anchor=north,inner sep=0] at (0.2,0.9) {$0$};
      \node[anchor=north,inner sep=0] at (4.,0.9) {$\rho_u^*$};
      \node[anchor=north,inner sep=0] at (3.25,0.9) {$\hat\rho_u$};
      \node[anchor=north,inner sep=0] at (0.78,0.9) {$\check\rho_u$};
      \node[anchor=north,inner sep=0] at (5,0.9) {$\rho$};
      \node[anchor=east,inner sep=0] at (0.2,6) {$f$};
      \node[anchor=north,inner sep=0] at (4.5,0.9) {\small $R$};
      \node[anchor=south,inner sep=0] at (2.5,4.6) {$\rho_r$};
      \node[anchor=south,inner sep=0] at (0.8,3.6) {$\rho_l$};

      \node[anchor=south east,inner sep=0] at (4.8,4.8) {$\fhi_u$};
      \node[anchor=south east,inner sep=0] at (4.8,2.8) {$u$};

      \draw[fill] (2.4, 4.5) circle (0.06);
      \draw[fill] (1., 3.6) circle (0.06);
      \draw[very thick, dashed] (1., 3.6) -- (2.4, 4.5);

      \draw[->] (6,1) -- (11, 1);
      \draw[->] (8.5, 1) -- (8.5, 6);

      \draw[color=black] (8.5, 1) -- (8.9, 6);
      \draw[very thick] (8.5, 1) -- (9.6, 6);
      \draw[color=black] (8.5, 1) -- (10.3, 6);

      \node[anchor=south,inner sep=0] at (8,4.) {$\rho_l$};
      \node[anchor=south,inner sep=0] at (10,4.) {$\rho_r$};

      \node[anchor=south,inner sep=0] at (9.1,5.)
      {$\hat \rho_u$};
      \node[anchor=south,inner sep=0] at (9.8,5.)
      {$\check \rho_u$};

      \node[anchor=north,inner sep=0] at (11.,0.9) { $x$};
      \node[anchor=east,inner sep=0] at (8.4,6) {$t$};
      \node[anchor=north,inner sep=0] at (8.5,0.9) {$0$};

    \end{tikzpicture}    
  \caption{The Riemann problem. 
    The situation of a shock with speed bigger than $u$ and
    $f\left(\rho_l\right) > \fhi_u(\rho_l)$. In this case $y(t) = y(0) + ut$,
    while the solution for $\rho$ is composed by two classical shocks and
    one non classical shock.}
  \label{fig:case-shock-II-a}
\end{figure}
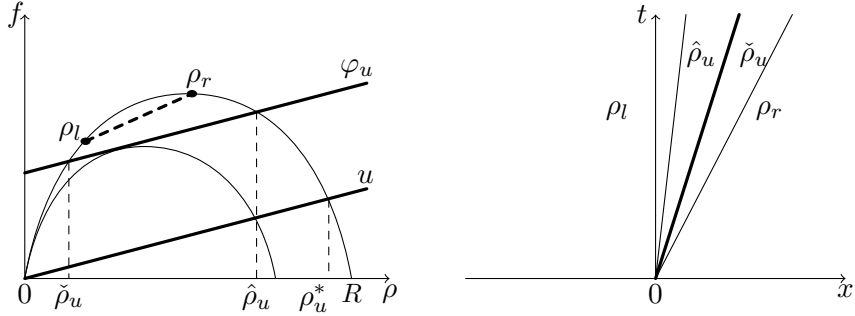

\begin{figure}
  \centering
    \begin{tikzpicture}[line cap=round,line join=round,x=1.cm,y=0.9cm]

      \draw[<->] (0.2, 5.) -- (0.2, 1.) -- (5, 1.);

      \draw (0.2, 1.) to [out = 80, in = 180] (2.35, 4.)
      to [out = 0, in = 100] (4.5, 1.);

      \draw (0.2, 1.) to [out = 80, in = 180] (1.77, 3.)
      to [out = 0, in = 100] (3.5, 1.);

      \draw[very thick] (0.2, 1.) -- (0.2 + 4.5, 1. + 1.7);
      \draw[very thick] (0.2, 2.5) -- (0.2 + 4.5, 2.5 + 1.7);

      \draw[dashed] (4.12, 2.45) -- (4.12, 1.);
      \draw[dashed] (3.33, 3.7) -- (3.33, 1.);
      \draw[dashed] (0.66, 2.65) -- (0.66, 1.);

      \node[anchor=north,inner sep=0] at (0.2,0.9) {$0$};
      \node[anchor=north,inner sep=0] at (4.,0.9) {$\rho_u^*$};
      \node[anchor=north,inner sep=0] at (3.33,0.9) {$\hat\rho_u$};
      \node[anchor=north,inner sep=0] at (0.66,0.9) {$\check\rho_u$};
      \node[anchor=north,inner sep=0] at (5,0.9) {$\rho$};
      \node[anchor=east,inner sep=0] at (0.2,5) {$f$};
      \node[anchor=north,inner sep=0] at (4.5,0.9) {\small $R$};
      \node[anchor=south,inner sep=0] at (4.,3.4) {$\rho_r$};
      \node[anchor=south,inner sep=0] at (0.5,1.4) {$\rho_l$};

      \node[anchor=south east,inner sep=0] at (4.8,4.3) {$\fhi_u$};
      \node[anchor=south east,inner sep=0] at (4.8,2.8) {$u$};

      \draw[fill] (3.6, 3.4) circle (0.06);
      \draw[fill] (0.38, 1.8) circle (0.06);
      \draw[very thick, dashed] (.38, 1.8) -- (3.6, 3.4);

      \draw[->] (6,1) -- (11, 1);
      \draw[->] (8.5, 1) -- (8.5, 5);

      \draw[color=black] (8.5, 1) -- (11., 5);
      \draw[very thick, dashed] (8.5, 1) -- (9.6, 5);

      \node[anchor=south,inner sep=0] at (10.5,3.) {$\rho_r$};
      \node[anchor=south,inner sep=0] at (10,4.) {$\rho_l$};
      \node[anchor=south,inner sep=0] at (9,4.) {$\rho_l$};


      \node[anchor=north,inner sep=0] at (11.,0.9) {$x$};
      \node[anchor=east,inner sep=0] at (8.4,5) {$t$};
      \node[anchor=north,inner sep=0] at (8.5,0.9) {$0$};

    \end{tikzpicture}    
  \caption{The Riemann problem. 
    The situation of a shock with speed greater than $u$ and
    $u \rho_l \le f\left(\rho_l\right) \le \fhi_u\left(\rho_l\right)$.
    In this case $y(t) = y(0) + ut$,
    while the solution for $\rho$ is composed by the classical shock
    connecting $\rho_l$ to $\rho_r$.}
  \label{fig:case-shock-II-b}
\end{figure}

Finally suppose $\sigma = u$.
The following possibilities hold.
\begin{enumerate}
\item $f\left(\rho_r\right) > \fhi_u\left(\rho_r\right)$.
  This hypothesis implies that $\check \rho_u < \rho_r < \hat \rho_u$, while
  $\sigma = u$ implies that the vehicle at $y$ has density $\rho_r$ in front.
  Moreover $\sigma = u$  and $\rho_l < \rho_r$
  imply that $\check \rho_u < \rho_l < \rho_r < \hat \rho_u$.
%
%
  In this case the solution is given by
  \begin{equation*}
    \rho(t,x) = \left\{
      \begin{array}{ll}
        \rho_l 
        & \textrm{ if } x < \frac{f\left(\rho_l\right) 
          - f\left(\hat \rho_u\right)}{\rho_l - \hat \rho_u} t
        \vspace{.2cm}\\
        \hat \rho_u
        & \textrm{ if } \frac{f\left(\rho_l\right) 
          - f\left(\hat \rho_u\right)}{\rho_l - \hat \rho_u} t < x
          < ut
        \vspace{.2cm}\\
        \check \rho_u
        & \textrm{ if } ut < x < \frac{f\left(\rho_r\right) 
          - f\left(\check \rho_u\right)}{\rho_r - \check \rho_u} t
        \vspace{.2cm}\\
        \rho_r
        & \textrm{ if } x > \frac{f\left(\rho_r\right) 
          - f\left(\check \rho_u\right)}{\rho_r - \check \rho_u} t
      \end{array}
    \right.
  \end{equation*}
  and
  \begin{equation*}
    y(t) = y(0) + u t.
  \end{equation*}
  Finally note that $u \le v\left(\rho_r\right)$,
  so that the constraint in~(\ref{eq:RP}) for $u$ is satisfied.

\item $u \rho_r \le f\left(\rho_r\right) \le \fhi_u\left(\rho_r\right)$.
  This hypothesis implies that either $0 \le \rho_r \le \check \rho_u$
  or $\hat \rho_u \le \rho_r \le \rho_u^*$, while
  $\sigma = u$ implies that in front of the vehicle at $y$
  there is density $\rho_r$.
  If $0 \le \rho_r \le \check \rho_u$, then the assumptions 
  $\rho_l < \rho_r$ and $\sigma = u$ produce a contradiction. Thus we deduce
  that $\hat \rho_u \le \rho_r \le \rho_u^*$.
  In this case the solution is given by
  \begin{equation*}
    \rho(t,x) = \left\{
      \begin{array}{ll}
        \rho_l 
        & \textrm{ if } x < \frac{f\left(\rho_l\right) 
          - f\left(\rho_r\right)}{\rho_l - \rho_r} t
        \vspace{.2cm}\\
        \rho_r
        & \textrm{ if } x > \frac{f\left(\rho_l\right) 
          - f\left(\rho_r\right)}{\rho_l - \rho_r} t
      \end{array}
    \right.
  \end{equation*}
  and
  \begin{equation*}
    y(t) = y(0) + u t.
  \end{equation*}
  Finally note that $u \le v\left(\rho_r\right)$,
  so that the constraint in~(\ref{eq:RP}) for $u$ is satisfied.

\item $f\left(\rho_r\right) < u \rho_r$.
  This hypothesis implies that $\rho_r > \rho_u$, while
  $\sigma = u$ implies that in the front of the vehicle at $y$
  there is density $\rho_r$.
  The fact that $\rho_l < \rho_r$ is in contradiction with 
  $\sigma = u$, so that this case does not happen.
\end{enumerate}

%
%
\subsection{Case of a \emph{rarefaction wave}
  in $\rho$: $\rho_r < \rho_l$.}
\label{se:rp:rarefaction}
Suppose that $0 \le \rho_r < \rho_l \le R$. Denote with $\sigma_l$
and $\sigma_r$ respectively the characteristic speeds of $\rho_l$ and
$\rho_r$, i.e.
\begin{equation*}
  \sigma_l = f'\left(\rho_l\right)
  \qquad \textrm{ and }\qquad
  \sigma_r = f'\left(\rho_r\right).
\end{equation*}
Note that $\sigma_l < \sigma_r$.

First assume that $\sigma_r < u$, so that
the density in front of the vehicle at $y$ is $\rho_r$.
The following possibilities hold.
\begin{enumerate}
\item $f\left(\rho_r\right) > \fhi_u\left(\rho_r\right)$.
  This hypothesis implies that $\check \rho_u < \rho_r < \hat \rho_u$.
  If $\rho_l \le \hat \rho_u$, then the solution for $\rho$ is given by
  \begin{equation*}
    \rho(t,x) = \left\{
      \begin{array}{ll}
        \rho_l 
        & \textrm{ if } x < \frac{f\left(\rho_l\right) 
          - f\left(\hat \rho_u\right)}{\rho_l - \hat \rho_u} t
        \vspace{.2cm}\\
        \hat \rho_u
        & \textrm{ if } \frac{f\left(\rho_l\right) 
          - f\left(\hat \rho_u\right)}{\rho_l - \hat \rho_u} t < x
          < ut
        \vspace{.2cm}\\
        \check \rho_u
        & \textrm{ if } ut < x < \frac{f\left(\rho_r\right) 
          - f\left(\check \rho_u\right)}{\rho_r - \check \rho_u} t
        \vspace{.2cm}\\
        \rho_r
        & \textrm{ if } x > \frac{f\left(\rho_r\right) 
          - f\left(\check \rho_u\right)}{\rho_r - \check \rho_u} t,
      \end{array}
    \right.
  \end{equation*}
  while if $\rho_l > \hat \rho_u$, then the solution for $\rho$ is given by
  \begin{equation*}
    \rho(t,x) = \left\{
      \begin{array}{ll}
        \rho_l 
        & \textrm{ if } x < f'(\rho_l) t
        \vspace{.2cm}\\
        \left(f'\right)^{-1}\left(\frac{x}{t}\right)
        & 
          \textrm{ if }  f'(\rho_l) t < x
          < f'\left(\hat \rho_u\right) t
        \vspace{.2cm}\\
        \hat \rho_u
        & \textrm{ if } f'\left(\hat \rho_u\right) t < x
          < ut
        \vspace{.2cm}\\
        \check \rho_u
        & \textrm{ if } ut < x < \frac{f\left(\rho_r\right) 
          - f\left(\check \rho_u\right)}{\rho_r - \check \rho_u} t
        \vspace{.2cm}\\
        \rho_r
        & \textrm{ if } x > \frac{f\left(\rho_r\right) 
          - f\left(\check \rho_u\right)}{\rho_r - \check \rho_u} t.
      \end{array}
    \right.
  \end{equation*}
  The solution for $y$ is given by
  \begin{equation*}
    y(t) = u t;
  \end{equation*}
  see Figure~\ref{fig:case-rarefaction-a}.
  Finally note that $u \le v\left(\check \rho_u\right)$,
  so that the constraint in~(\ref{eq:RP}) for $u$ is satisfied.
  \begin{figure}
    \centering
    \begin{tikzpicture}[line cap=round,line join=round,x=1.cm,y=0.7cm]

      \draw[<->] (0.2, 6.) -- (0.2, 1.) -- (5, 1.);

      \draw (0.2, 1.) to [out = 80, in = 180] (2.35, 4.5) to [out = 0,
      in = 100] (4.5, 1.);

      \draw (0.2, 1.) to [out = 80, in = 180] (1.77, 3.5) to [out = 0,
      in = 100] (3.5, 1.);

      \draw[very thick] (0.2, 1.) -- (0.2 + 4.5,1. + 1.7); 
      \draw[very thick] (0.2, 3) -- (0.2 + 4.5, 3. + 1.7);

      \draw[dashed] (4.2, 2.5) -- (4.2, 1.); \draw[dashed] (3.25,
      4.17) -- (3.25, 1.); \draw[dashed] (0.78, 3.2) -- (0.78, 1.);

      \node[anchor=north,inner sep=0] at (0.2,0.9) {$0$};
      \node[anchor=north,inner sep=0] at (4.,0.9) {$\rho_u^*$};
      \node[anchor=north,inner sep=0] at (3.25,0.9) {$\hat\rho_u$};
      \node[anchor=north,inner sep=0] at (0.78,0.9) {$\check\rho_u$};
      \node[anchor=north,inner sep=0] at (5,0.9) {$\rho$};
      \node[anchor=east,inner sep=0] at (0.2,6) {$f$};
      \node[anchor=north,inner sep=0] at (4.5,0.9) {\small $R$};
      \node[anchor=south,inner sep=0] at (3.1,4.4) {$\rho_r$};
      \node[anchor=south,inner sep=0] at (4.2,3.1) {$\rho_l$};

      \node[anchor=south east,inner sep=0] at (4.8,4.8) {$\fhi_u$};
      \node[anchor=south east,inner sep=0] at (4.8,2.8) {$u$};

      \draw[fill] (3., 4.31) circle (0.06);
      \draw[fill] (4., 3.01) circle (0.06);
      \draw[very thick, dashed] (4, 3.01) to [out=120, in=340]
      (3., 4.31);

      \draw[->] (6,1) -- (11, 1);
      \draw[->] (8.5, 1) -- (8.5, 6);

      \draw[color=black, dashed] (8.5, 1) -- (7, 6);
      \draw[color=black, dashed] (8.5, 1) -- (7.1, 6);
      \draw[color=black, dashed] (8.5, 1) -- (7.2, 6);
      \draw[color=black, dashed] (8.5, 1) -- (7.3, 6);

      \draw[very thick] (8.5, 1) -- (9.6, 6); 
      \draw[color=black] (8.5, 1) -- (10.1, 6);

      \node[anchor=south,inner sep=0] at (7,4.) {$\rho_l$};
      \node[anchor=south,inner sep=0] at (10,4.) {$\rho_r$};

      \node[anchor=south,inner sep=0] at (9.,5.)  {$\hat \rho_u$};
      \node[anchor=south,inner sep=0] at (8.1,5.)  {$\hat \rho_u$};

      \node[anchor=south,inner sep=0] at (9.65,5.) {$\check \rho_u$};

      \node[anchor=north,inner sep=0] at (11.,0.9) {$x$}; 
      \node[anchor=east,inner sep=0] at (8.4,6) {$t$}; 
      \node[anchor=north,inner sep=0] at (8.5,0.9) {$0$};

    \end{tikzpicture}
    \caption{The Riemann problem.  The situation of a rarefaction with speed
      less than $u$ and $f\left(\rho_r\right) > \fhi_u(\rho_r)$. In
      this case $y(t) = y(0) + ut$, while the solution for $\rho$ is
      composed by two classical waves and one non classical shock.}
    \label{fig:case-rarefaction-a}
  \end{figure}

\item $u \rho_r \le f\left(\rho_r\right) \le \fhi_u\left(\rho_r\right)$.
  This hypothesis implies that either $0 \le \rho_r \le \check \rho_u$
  or $\hat \rho_u \le \rho_r \le \rho_u^*$
  If $0 \le \rho_r \le \check \rho_u$, then the assumption 
  $\sigma_r < u$ gives a contradiction. Thus we deduce
  that $\hat \rho_u \le \rho_r \le \rho_u^*$.
  In this case the solution is given by
  \begin{equation*}
    \rho(t,x) = \left\{
      \begin{array}{ll}
        \rho_l 
        & \textrm{ if } x < f'\left(\rho_l\right) t
        \vspace{.2cm}\\
        \left(f'\right)^{-1} \left(\frac{x}{t}\right)
        &
          \textrm{ if } f'\left(\rho_l\right) t < x < f'\left(\rho_r\right) t 
        \vspace{.2cm}\\
        \rho_r
        & \textrm{ if } x > f'\left(\rho_r\right) t
      \end{array}
    \right.
  \end{equation*}
  and
  \begin{equation*}
    y(t) = u t;
  \end{equation*}
  see Figure~\ref{fig:case-rarefaction-b}.
  Finally note that $u \le v\left(\rho_r\right)$,
  so that the constraint in~(\ref{eq:RP}) for $u$ is satisfied.
  \begin{figure}
    \centering
    \begin{tikzpicture}[line cap=round,line join=round,x=1.cm,y=0.7cm]

      \draw[<->] (0.2, 6.) -- (0.2, 1.) -- (5, 1.);

      \draw (0.2, 1.) to [out = 80, in = 180] (2.35, 4.5) to [out = 0,
      in = 100] (4.5, 1.);

      \draw (0.2, 1.) to [out = 80, in = 180] (1.77, 3.5) to [out = 0,
      in = 100] (3.5, 1.);

      \draw[very thick] (0.2, 1.) -- (0.2 + 4.5,1. + 1.7); 
      \draw[very thick] (0.2, 3) -- (0.2 + 4.5, 3. + 1.7);

      \draw[dashed] (4.2, 2.5) -- (4.2, 1.); \draw[dashed] (3.25,
      4.17) -- (3.25, 1.); \draw[dashed] (0.78, 3.2) -- (0.78, 1.);

      \node[anchor=north,inner sep=0] at (0.2,0.9) {$0$};
      \node[anchor=north,inner sep=0] at (4.,0.9) {$\rho_u^*$};
      \node[anchor=north,inner sep=0] at (3.25,0.9) {$\hat\rho_u$};
      \node[anchor=north,inner sep=0] at (0.78,0.9) {$\check\rho_u$};
      \node[anchor=north,inner sep=0] at (5,0.9) {$\rho$};
      \node[anchor=east,inner sep=0] at (0.2,6) {$f$};
      \node[anchor=north,inner sep=0] at (4.5,0.9) {\small $R$};
      \node[anchor=south,inner sep=0] at (3.8,3.9) {$\rho_r$};
      \node[anchor=south,inner sep=0] at (4.2,3.1) {$\rho_l$};

      \node[anchor=south east,inner sep=0] at (4.8,4.8) {$\fhi_u$};
      \node[anchor=south east,inner sep=0] at (4.8,2.8) {$u$};

      \draw[fill] (3.5, 3.9) circle (0.06);
      \draw[fill] (4., 3.01) circle (0.06);
      \draw[very thick, dashed] (4, 3.01) to [out=120, in=320]
      (3.5, 3.9);

      \draw[->] (6,1) -- (11, 1);
      \draw[->] (8.5, 1) -- (8.5, 6);

      \draw[dashed] (8.5, 1) -- (7, 6);
      \draw[dashed] (8.5, 1) -- (7.1, 6);
      \draw[dashed] (8.5, 1) -- (7.2, 6);
      \draw[dashed] (8.5, 1) -- (7.3, 6);

      \draw[very thick, dashed] (8.5, 1) -- (9.6, 6); 

      \node[anchor=south,inner sep=0] at (7,4.) {$\rho_l$};
      \node[anchor=south,inner sep=0] at (10,4.) {$\rho_r$};

      \node[anchor=south,inner sep=0] at (9.,5.)  {$\rho_r$};
      \node[anchor=south,inner sep=0] at (8.1,5.)  {$\rho_r$};

      \node[anchor=north,inner sep=0] at (11.,0.9) {$x$}; 
      \node[anchor=east,inner sep=0] at (8.4,6) {$t$}; 
      \node[anchor=north,inner sep=0] at (8.5,0.9) {$0$};

    \end{tikzpicture}
    \caption{The Riemann problem.  The situation of a rarefaction with speed
      less than $u$ and $u \rho_r \le f\left(\rho_r\right) \le \fhi_u(\rho_r)$.
      In
      this case $y(t) = ut$, while the solution for $\rho$ is
      composed by a classical rarefaction wave.}
    \label{fig:case-rarefaction-b}
  \end{figure}

\item $f\left(\rho_r\right) < u \rho_r$.
  This hypothesis implies that $\rho_r > \rho_u^*$.
  In this case the solution is given by
  \begin{equation*}
    \rho(t,x) = \left\{
      \begin{array}{ll}
        \rho_l 
        & \textrm{ if } x < f'\left(\rho_l\right) t
        \vspace{.2cm}\\
        \left(f'\right)^{-1} \left(\frac{x}{t}\right)
        &
          \textrm{ if } f'\left(\rho_l\right) t < x < f'\left(\rho_r\right) t 
        \vspace{.2cm}\\
        \rho_r
        & \textrm{ if } x > f'\left(\rho_r\right) t
      \end{array}
    \right.
  \end{equation*}
  and
  \begin{equation*}
    y(t) = u_{e} t,
  \end{equation*}
  where $u_e = v(\rho_r) < u$;
  see Figure~\ref{fig:case-rarefaction-c}.
  Note that $u > v\left(\rho_l\right)$, so that the constraint
  in~(\ref{eq:RP}) for $u$ is not satisfied. Thus the effective control
  is $u_e = v(\rho_r)$.
  \begin{figure}
    \centering
    \begin{tikzpicture}[line cap=round,line join=round,x=1.cm,y=0.7cm]
      
      \draw[<->] (0.2, 6.) -- (0.2, 1.) -- (5, 1.);

      \draw (0.2, 1.) to [out = 80, in = 180] (2.35, 4.5) to [out = 0,
      in = 100] (4.5, 1.);

      \draw (0.2, 1.) to [out = 80, in = 180] (1.77, 3.5) to [out = 0,
      in = 100] (3.5, 1.);

      \draw[very thick] (0.2, 1.) -- (0.2 + 4.5,1. + 1.7); 
      \draw[very thick] (0.2, 3) --  (0.2 + 4.5, 3. + 1.7);

      \draw[very thick, dotted] (0.2, 1.) -- (4.28, 2.2);

      \draw[dashed] (4.2, 2.5) -- (4.2, 1.); \draw[dashed] (3.25,
      4.17) -- (3.25, 1.); \draw[dashed] (0.78, 3.2) -- (0.78, 1.);

      \node[anchor=north,inner sep=0] at (0.2,0.9) {$0$};
      \node[anchor=north,inner sep=0] at (4.,0.9) {$\rho_u^*$};
      \node[anchor=north,inner sep=0] at (3.25,0.9) {$\hat\rho_u$};
      \node[anchor=north,inner sep=0] at (0.78,0.9) {$\check\rho_u$};
      \node[anchor=north,inner sep=0] at (5,0.9) {$\rho$};
      \node[anchor=east,inner sep=0] at (0.2,6) {$f$};
      \node[anchor=north,inner sep=0] at (4.5,0.9) {\small $R$};
      \node[anchor=south,inner sep=0] at (4.6,2.) {$\rho_r$};
      \node[anchor=south,inner sep=0] at (4.7,1.6) {$\rho_l$};

      \node[anchor=south east,inner sep=0] at (4.8,4.8) {$\fhi_u$};
      \node[anchor=south east,inner sep=0] at (4.8,2.8) {$u$};

      \draw[fill] (4.28, 2.2) circle (0.06);
      \draw[fill] (4.4, 1.7) circle (0.06);
      \draw[very thick, dashed] (4.4, 1.7) to [out=125, in=305]
      (4.28, 2.2);

      \draw[->] (6,1) -- (11, 1);
      \draw[->] (8.5, 1) -- (8.5, 6);

      \draw[dashed] (8.5, 1) -- (7, 6);
      \draw[dashed] (8.5, 1) -- (7.1, 6);
      \draw[dashed] (8.5, 1) -- (7.2, 6);
      \draw[dashed] (8.5, 1) -- (7.3, 6);

      \draw[color=black] (8.5, 1) -- (7.3, 6); 
      \draw[very thick, dashed] (8.5, 1) -- (9.3, 6); 

      \node[anchor=south,inner sep=0] at (7,4.) {$\rho_l$};

      \node[anchor=south,inner sep=0] at (9.,5.)  {$\rho_r$};
      \node[anchor=south,inner sep=0] at (8.1,5.)  {$\rho_r$};

      \node[anchor=north,inner sep=0] at (11.,0.9) {$x$}; 
      \node[anchor=east,inner sep=0] at (8.4,6) {$t$}; 
      \node[anchor=north,inner sep=0] at (8.5,0.9) {$0$};

    \end{tikzpicture}
    \caption{The Riemann problem.  The situation of a rarefaction with speed
      less than $u$ and $f\left(\rho_r\right) < u \rho_r$.
      In
      this case $y(t) = u_e t$, while the solution for $\rho$ is
      composed by a classical rarefaction wave.}
    \label{fig:case-rarefaction-c}
  \end{figure}

\end{enumerate}

Assume now that $u < \sigma_l$, so that
the density in front of the vehicle at $y$ is $\rho_l$.
The following possibilities hold.
\begin{enumerate}
\item $f\left(\rho_l\right) > \fhi_u\left(\rho_l\right)$.
  This hypothesis implies that $\check \rho_u < \rho_l < \hat \rho_u$.
  If $\rho_r \ge \check \rho_u$, then the solution for $\rho$ is given by
  \begin{equation*}
    \rho(t,x) = \left\{
      \begin{array}{ll}
        \rho_l 
        & \textrm{ if } x < \frac{f\left(\rho_l\right) 
          - f\left(\hat \rho_u\right)}{\rho_l - \hat \rho_u} t
        \vspace{.2cm}\\
        \hat \rho_u
        & \textrm{ if } \frac{f\left(\rho_l\right) 
          - f\left(\hat \rho_u\right)}{\rho_l - \hat \rho_u} t < x
          < ut
        \vspace{.2cm}\\
        \check \rho_u
        & \textrm{ if } ut < x < \frac{f\left(\rho_r\right) 
          - f\left(\check \rho_u\right)}{\rho_r - \check \rho_u} t
        \vspace{.2cm}\\
        \rho_r
        & \textrm{ if } x > \frac{f\left(\rho_r\right) 
          - f\left(\check \rho_u\right)}{\rho_r - \check \rho_u} t,
      \end{array}
    \right.
  \end{equation*}
  while if $\rho_r < \check \rho_u$, then the solution for $\rho$ is given by
  \begin{equation*}
    \rho(t,x) = \left\{
      \begin{array}{ll}
        \rho_l 
        & \textrm{ if } x < 
          \frac{f\left(\rho_l\right) - f\left(\hat \rho_u\right)}
          {\rho_l - \hat \rho_u} \, t
        \vspace{.2cm}\\
        \hat \rho_u
        & \textrm{ if } \frac{f\left(\rho_l\right) - f\left(\hat \rho_u\right)}
          {\rho_l - \hat \rho_u} \, t < x
          < ut
        \vspace{.2cm}
        \\
        \check \rho_u
        & \textrm{ if } ut < x < f'\left(\check \rho_u\right) t
        \vspace{.2cm}\\
        \left(f'\right)^{-1}\left(\frac{x}{t}\right)
        & 
          \textrm{ if }  f'\left(\check \rho_u\right) t < x
          < f'\left(\rho_r\right) t
        \vspace{.2cm}\\
        \rho_r
        & \textrm{ if } x > f'\left(\rho_r\right) t.
      \end{array}
    \right.
  \end{equation*}
  The solution for $y$ is given by
  \begin{equation*}
    y(t) = u t;
  \end{equation*}
  see Figure~\ref{fig:case-rarefaction-a-2}.
  Finally note that $u \le v\left(\check \rho_u\right)$,
  so that the constraint in~(\ref{eq:RP}) for $u$ is satisfied.
  \begin{figure}
    \centering
    \begin{tikzpicture}[line cap=round,line join=round,x=1.cm,y=0.7cm]

      \draw[<->] (0.2, 6.) -- (0.2, 1.) -- (5, 1.);

      \draw (0.2, 1.) to [out = 80, in = 180] (2.35, 4.5) to [out = 0,
      in = 100] (4.5, 1.);

      \draw (0.2, 1.) to [out = 80, in = 180] (1.77, 3.5) to [out = 0,
      in = 100] (3.5, 1.);

      \draw[very thick] (0.2, 1.) -- (0.2 + 4.5,
      1. + 1.7);
      \draw[very thick] (0.2, 3) --
      (0.2 + 4.5, 3. + 1.7);

      \draw[dashed] (4.2, 2.5) -- (4.2, 1.); \draw[dashed] (3.25,
      4.17) -- (3.25, 1.); \draw[dashed] (0.78, 3.2) -- (0.78, 1.);

      \node[anchor=north,inner sep=0] at (0.2,0.9) {$0$};
      \node[anchor=north,inner sep=0] at (4.,0.9) {$\rho_u^*$};
      \node[anchor=north,inner sep=0] at (3.25,0.9) {$\hat\rho_u$};
      \node[anchor=north,inner sep=0] at (0.78,0.9) {$\check\rho_u$};
      \node[anchor=north,inner sep=0] at (5,0.9) {$\rho$};
      \node[anchor=east,inner sep=0] at (0.2,6) {$f$};
      \node[anchor=north,inner sep=0] at (4.5,0.9) {\small $R$};
      \node[anchor=south,inner sep=0] at (0.4,2.5) {$\rho_r$};
      \node[anchor=south,inner sep=0] at (1.3,4.2) {$\rho_l$};

      \node[anchor=south east,inner sep=0] at (4.8,4.8) {$\fhi_u$};
      \node[anchor=south east,inner sep=0] at (4.8,2.8) {$u$};

      \draw[fill] (1.3, 4.) circle (0.06);
      \draw[fill] (0.6, 2.8) circle (0.06);
      \draw[very thick, dashed] (1.3, 4.) to [out=220, in=60]
      (0.6, 2.8);

      \draw[->] (6,1) -- (11, 1);
      \draw[->] (8.5, 1) -- (8.5, 6);

      \draw[color=black] (8.5, 1) -- (9, 6);

      \draw[very thick] (8.5, 1) -- (9.6, 6); 
      \draw[dashed] (8.5, 1) -- (10.1, 6);
      \draw[dashed] (8.5, 1) -- (10.2, 6);
      \draw[dashed] (8.5, 1) -- (10.3, 6);
      \draw[dashed] (8.5, 1) -- (10.4, 6);

      \node[anchor=south,inner sep=0] at (7,4.) {$\rho_l$};
      \node[anchor=south,inner sep=0] at (10,4.) {$\rho_r$};

      \node[anchor=south,inner sep=0] at (9.2,5.)  {$\hat \rho_u$};

      \node[anchor=south,inner sep=0] at (9.65,5.) {$\check \rho_u$};

      \node[anchor=north,inner sep=0] at (11.,0.9) {$x$}; 
      \node[anchor=east,inner sep=0] at (8.4,6) {$t$}; 
      \node[anchor=north,inner sep=0] at (8.5,0.9) {$0$};

    \end{tikzpicture}
    \caption{The Riemann problem.  The situation of a rarefaction with speed
      greater than $u$ and $f\left(\rho_l\right) > \fhi_u(\rho_l)$. In
      this case $y(t) = y(0) + ut$, while the solution for $\rho$ is
      composed by two classical waves and one non classical shock.}
    \label{fig:case-rarefaction-a-2}
  \end{figure}

\item $f\left(\rho_l\right) \le \fhi_u\left(\rho_l\right)$.
  Since $u < \sigma_l < \sigma_r$,
  this hypothesis implies that
  $\rho_r < \rho_l \le \check \rho_u$.
  In this case the solution is given by
  \begin{equation*}
    \rho(t,x) = \left\{
      \begin{array}{ll}
        \rho_l 
        & \textrm{ if } x < f'\left(\rho_l\right) t
        \vspace{.2cm}\\
        \left(f'\right)^{-1} \left(\frac{x}{t}\right)
        &
          \textrm{ if } f'\left(\rho_l\right) t < x < f'\left(\rho_r\right) t 
        \vspace{.2cm}\\
        \rho_r
        & \textrm{ if } x > f'\left(\rho_r\right) t
      \end{array}
    \right.
  \end{equation*}
  and
  \begin{equation*}
    y(t) = u t;
  \end{equation*}
  see Figure~\ref{fig:case-rarefaction-b-2}.
  Finally note that $u \le v\left(\rho_l\right)$,
  so that the constraint in~(\ref{eq:RP}) for $u$ is satisfied.
  \begin{figure}
    \centering
    \begin{tikzpicture}[line cap=round,line join=round,x=1.cm,y=0.7cm]
      
      \draw[<->] (0.2, 6.) -- (0.2, 1.) -- (5, 1.);

      \draw (0.2, 1.) to [out = 80, in = 180] (2.35, 4.5) to [out = 0,
      in = 100] (4.5, 1.);

      \draw (0.2, 1.) to [out = 80, in = 180] (1.77, 3.5) to [out = 0,
      in = 100] (3.5, 1.);

      \draw[very thick] (0.2, 1.) -- (0.2 + 4.5,
      1. + 1.7);
      \draw[very thick] (0.2, 3) --
      (0.2 + 4.5, 3. + 1.7);

      \draw[dashed] (4.2, 2.5) -- (4.2, 1.); \draw[dashed] (3.25,
      4.17) -- (3.25, 1.); \draw[dashed] (0.78, 3.2) -- (0.78, 1.);

      \node[anchor=north,inner sep=0] at (0.2,0.9) {$0$};
      \node[anchor=north,inner sep=0] at (4.,0.9) {$\rho_u^*$};
      \node[anchor=north,inner sep=0] at (3.25,0.9) {$\hat\rho_u$};
      \node[anchor=north,inner sep=0] at (0.78,0.9) {$\check\rho_u$};
      \node[anchor=north,inner sep=0] at (5,0.9) {$\rho$};
      \node[anchor=east,inner sep=0] at (0.2,6) {$f$};
      \node[anchor=north,inner sep=0] at (4.5,0.9) {\small $R$};
      \node[anchor=south,inner sep=0] at (0.25,2.3) {$\rho_r$};
      \node[anchor=south,inner sep=0] at (0.7,3.3) {$\rho_l$};

      \node[anchor=south east,inner sep=0] at (4.8,4.8) {$\fhi_u$};
      \node[anchor=south east,inner sep=0] at (4.8,2.8) {$u$};

      \draw[fill] (0.65, 2.9) circle (0.06);
      \draw[fill] (0.5, 2.5) circle (0.06);
      \draw[very thick, dashed] (0.65, 2.9) to [out=220, in=60]
      (0.5, 2.5);

      \draw[->] (6,1) -- (11, 1);
      \draw[->] (8.5, 1) -- (8.5, 6);

      \draw[dashed] (8.5, 1) -- (10, 6);
      \draw[dashed] (8.5, 1) -- (10.1, 6);
      \draw[dashed] (8.5, 1) -- (10.2, 6);
      \draw[dashed] (8.5, 1) -- (10.3, 6);

      \draw[very thick, dashed] (8.5, 1) -- (9.6, 6); 

      \node[anchor=south,inner sep=0] at (10,4.) {$\rho_r$};

      \node[anchor=south,inner sep=0] at (9.7,5.5)  {$\rho_l$};
      \node[anchor=south,inner sep=0] at (8.1,5.)  {$\rho_l$};

      \node[anchor=north,inner sep=0] at (11.,0.9) {$x$}; 
      \node[anchor=east,inner sep=0] at (8.4,6) {$t$}; 
      \node[anchor=north,inner sep=0] at (8.5,0.9) {$0$};

    \end{tikzpicture}
    \caption{The Riemann problem.  The situation of a rarefaction with speed
      greater
      than $u$ and $f\left(\rho_l\right) \le \fhi_u(\rho_l)$.
      In
      this case $y(t) = ut$, while the solution for $\rho$ is
      composed by a classical rarefaction wave.}
    \label{fig:case-rarefaction-b-2}
  \end{figure}

\end{enumerate}

Assume finally that $\sigma_l \le u \le \sigma_r$, so that
the density in front of the vehicle at $y$ is
$\tilde \rho = \left(f'\right)^{-1}(u)$. Note that
$\check \rho_u < \tilde \rho < \hat \rho_u$.
The following possibilities hold.

\begin{enumerate}
\item $\rho_l \le \hat \rho_u$ and $\rho_r \ge \check \rho_u$.
  The solution for $\rho$ is given by
  \begin{equation*}
    \rho(t,x) = \left\{
      \begin{array}{ll}
        \rho_l 
        & \textrm{ if } x < \frac{f\left(\rho_l\right) 
          - f\left(\hat \rho_u\right)}{\rho_l - \hat \rho_u} t
        \vspace{.2cm}\\
        \hat \rho_u
        & \textrm{ if } \frac{f\left(\rho_l\right) 
          - f\left(\hat \rho_u\right)}{\rho_l - \hat \rho_u} t < x
          < ut
        \vspace{.2cm}\\
        \check \rho_u
        & \textrm{ if } ut < x < \frac{f\left(\rho_r\right) 
          - f\left(\check \rho_u\right)}{\rho_r - \check \rho_u} t
        \vspace{.2cm}\\
        \rho_r
        & \textrm{ if } x > \frac{f\left(\rho_r\right) 
          - f\left(\check \rho_u\right)}{\rho_r - \check \rho_u} t,
      \end{array}
    \right.
  \end{equation*}
  while the solution for $y$ is
  \begin{equation*}
    y(t) = u t.
  \end{equation*}

\item $\rho_l > \hat \rho_u$ and $\rho_r \ge \check \rho_u$.
  The solution for $\rho$ is given by
  \begin{equation*}
    \rho(t,x) = \left\{
      \begin{array}{ll}
        \rho_l 
        & \textrm{ if } x < f'\left(\rho_l\right) t
        \vspace{.2cm}\\
        \left(f'\right)^{-1}\left(\frac{x}{t}\right)
        & \textrm{ if } f'\left(\rho_l\right) t < x < 
          f'\left(\hat \rho_u\right) t
        \vspace{.2cm}\\
        \hat \rho_u
        & \textrm{ if } f'\left(\hat \rho_u\right) t < x
          < ut
        \vspace{.2cm}\\
        \check \rho_u
        & \textrm{ if } ut < x < \frac{f\left(\rho_r\right) 
          - f\left(\check \rho_u\right)}{\rho_r - \check \rho_u} t
        \vspace{.2cm}\\
        \rho_r
        & \textrm{ if } x > \frac{f\left(\rho_r\right) 
          - f\left(\check \rho_u\right)}{\rho_r - \check \rho_u} t,
      \end{array}
    \right.
  \end{equation*}
  while the solution for $y$ is
  \begin{equation*}
    y(t) = u t.
  \end{equation*}

\item $\rho_l \le \hat \rho_u$ and $\rho_r < \check \rho_u$.
  The solution for $\rho$ is given by
  \begin{equation*}
    \rho(t,x) = \left\{
      \begin{array}{ll}
        \rho_l 
        & \textrm{ if } x < \frac{f\left(\rho_l\right) 
          - f\left(\hat \rho_u\right)}{\rho_l - \hat \rho_u} t
        \vspace{.2cm}\\
        \hat \rho_u
        & \textrm{ if } \frac{f\left(\rho_l\right) 
          - f\left(\hat \rho_u\right)}{\rho_l - \hat \rho_u} t < x
          < ut
        \vspace{.2cm}\\
        \check \rho_u
        & \textrm{ if } ut < x < f'\left(\check \rho_u\right)t
        \vspace{.2cm}\\
        \left(f'\right)^{-1}\left(\frac{x}{t}\right)
        & \textrm{ if } f'\left(\check \rho_u\right)t < x < 
          f'\left(\rho_r\right)t
        \vspace{.2cm}\\
        \rho_r
        & \textrm{ if } x > f'\left(\rho_r\right)t,
      \end{array}
    \right.
  \end{equation*}
  while the solution for $y$ is
  \begin{equation*}
    y(t) = u t.
  \end{equation*}

\item $\rho_l > \hat \rho_u$ and $\rho_r < \check \rho_u$.
  The solution for $\rho$ is given by
  \begin{equation*}
    \rho(t,x) = \left\{
      \begin{array}{ll}
        \rho_l 
        & \textrm{ if } x < f'\left(\rho_l\right) t
        \vspace{.2cm}\\
        \left(f'\right)^{-1}\left(\frac{x}{t}\right)
        & \textrm{ if } f'\left(\rho_l\right) t < x < 
          f'\left(\hat \rho_u\right) t
        \vspace{.2cm}\\
        \hat \rho_u
        & \textrm{ if } f'\left(\hat \rho_u\right) t < x
          < ut
        \vspace{.2cm}\\
        \check \rho_u
        & \textrm{ if } ut < x < f'\left(\check \rho_u\right)t
        \vspace{.2cm}\\
        \left(f'\right)^{-1}\left(\frac{x}{t}\right)
        & \textrm{ if } f'\left(\check \rho_u\right)t < x < 
          f'\left(\rho_r\right)t
        \vspace{.2cm}\\
        \rho_r
        & \textrm{ if } x > f'\left(\rho_r\right)t,
      \end{array}
    \right.
  \end{equation*}
  while the solution for $y$ is
  \begin{equation*}
    y(t) = u t.
  \end{equation*}

\end{enumerate}

\end{appendices}

%
%

\bibliographystyle{abbrv}
    
\bibliography{biblio,refs}

\end{document}